\tikzset{tab/.style={matrix of math nodes,column sep=-.35, row sep=-.35,text height=7pt,text width=7pt,align=center,inner sep=2,font=\footnotesize}}
\tikzset{dynkdot/.style={circle,draw,scale=.45}}
\tikzset{dot/.style={circle,draw,fill,scale=.45}}
\newcommand{\tikzmark}[2]{\tikz[overlay,remember picture,baseline] \node [anchor=base] (#1) {$#2$};}
\newcommand{\DrawLine}[3][]{%
  \begin{tikzpicture}[overlay,remember picture]
    \draw[#1] (#2.210) -- (#3.30);
  \end{tikzpicture}
}
\def\@tocline#1#2#3#4#5#6#7{\relax
  \ifnum #1>\c@tocdepth % then omit
  \else
    \par \addpenalty\@secpenalty\addvspace{#2}%
    \begingroup \hyphenpenalty\@M
    \@ifempty{#4}{%
      \@tempdima\csname r@tocindent\number#1\endcsname\relax
    }{%
      \@tempdima#4\relax
    }%
    \parindent\z@ \leftskip#3\relax \advance\leftskip\@tempdima\relax
    \rightskip\@pnumwidth plus4em \parfillskip-\@pnumwidth
    #5\leavevmode\hskip-\@tempdima
      \ifcase #1
       \or\or \hskip 1em \or \hskip 2em \else \hskip 3em \fi%
      #6\nobreak\relax
    \dotfill\hbox to\@pnumwidth{\@tocpagenum{#7}}\par
    \nobreak
    \endgroup
  \fi}
\numberwithin{equation}{section}
\numberwithin{figure}{section}
\numberwithin{table}{section}
\newtheorem{Theorem}[equation]{Theorem}
\newtheorem{ThmDef}[equation]{Theorem/Definition}
\newtheorem{Proposition}[equation]{Proposition} 
\newtheorem{Lemma}[equation]{Lemma}
\newtheorem{Corollary}[equation]{Corollary}
\theoremstyle{definition}
\newtheorem{Definition}[equation]{Definition}
\newtheorem{Example}[equation]{Example}
\newtheorem{Remark}[equation]{Remark}
\newenvironment{acknowledgements}
{\bigskip\noindent\footnotesize\textbf{Acknowledgements.} }
{\medskip}
\newcommand{\arxiv}[1]{\href{http://arxiv.org/abs/#1}{\tt arXiv:\nolinkurl{#1}}}
\newcommand{\bc}{\mathbf{C}}
\newcommand{\bq}{\mathbf{Q}}
\newcommand{\bz}{\mathbf{Z}}
\newcommand{\CB}{\mathscr{B}}
\newcommand{\cc}{{\bm c}}
\newcommand{\defn}[1]{{\color{blue}\it#1}}
\newcommand{\ghost}{\boldsymbol{0}}
\newcommand{\ii}{{\bf i}}
\newcommand{\stack}[1]{\begin{smallmatrix}#1\end{smallmatrix}}
\newcommand{\wt}{{\rm wt}}
\lstdefinelanguage{Sage}[]{Python}
{morekeywords={False,True},sensitive=true}
\subjclass[2010]{17B37} 
\begin{document}

\title[Combinatorial descriptions of PBW crystals]{Combinatorial descriptions of the crystal structure on certain PBW bases}

\author{Ben Salisbury}
\thanks{B.S.\ was partially supported by CMU Early Career grant \#C62847 and by Simons Foundation grant \#429950}
\address{Department of Mathematics, Central Michigan University, Mount Pleasant, MI}
\email{ben.salisbury@cmich.edu}
\urladdr{http://people.cst.cmich.edu/salis1bt/}
\author{Adam Schultze}
\thanks{A.S.\ and P.T.\ were partially supported by NSF grant DMS-1265555}
\address{Department of Mathematics and Statistics, University at Albany, Albany, NY}
\email{aschultze@albany.edu}
\author{Peter Tingley}
\address{Department of Mathematics and Statistics, Loyola University, Chicago, IL}
\email{ptingley@luc.edu}
\urladdr{http://webpages.math.luc.edu/~ptingley/}

\begin{abstract}
Using the theory of PBW bases, one can realize the crystal $B(\infty)$ for any semisimple Lie algebra over $\mathbf{C}$ using Kostant partitions as the underlying set. In fact there are many such realizations, one for each reduced expression for the longest element of the Weyl group. There is an algorithm to calculate the actions of the crystal operators, but it can be quite complicated. Here we show that, for certain reduced expressions, the crystal operators can also be described by a much simpler bracketing rule. We give conditions describing these reduced expressions, and show that there is at least one example in every type except possibly $E_8$, $F_4$ and $G_2$. We then discuss some examples. 
\end{abstract}

\maketitle

\tableofcontents
 
\section{Introduction}

The crystal $B(\infty)$ of a semisimple Lie algebra $\mathfrak{g}$ over $\bc$ is a combinatorial object that contains a lot of information about $\mathfrak{g}$ and its finite-dimensional representations. 
Lusztig's early construction of canonical basis can be interpreted as giving a number of parameterizations of $B(\infty)$, one for each reduced expression for the longest element $w_0$ in the Weyl group (see \cite[Chapters 41 and 42]{L10} or  \cite{tingley}). In each of these realizations at least one of the crystal operators is very simple, but others may be complicated. However, Lusztig explicitly describes how the realizations are related for reduced expressions that differ by a braid move (see also Berenstein and Zelevinsky \cite{BZ97} for the non-simply-laced cases). This gives a way to realize the whole crystal: an element is a PBW monomial with respect to some chosen reduced expression. To apply a crystal operator, modify the element via a sequence of braid moves until that operator is simple, then apply the operator, then modify it back. 

This procedure is algorithmic, but can be complicated. In type $A_n$ there is a simpler realization, using multisegments, where the crystal operators are given by a bracketing rule. As discussed in \cite{CT15}, this is naturally identified with Lusztig's crystal structure for the reduced expression 
\[
w_0= (s_1 s_2 s_3 \cdots s_n)(s_1 \cdots s_{n-1}) \cdots (s_1 s_2) s_1.
\]
Our main result is to generalize this by giving conditions on a reduced expression that ensure
Lusztig's crystal structure 
is given by a similar rule, and to describe the resulting structure. For these words the crystal operators can be understood by combining many rank two calculations, and this combining is controlled by a bracketing procedure. 
There is at least one such reduced expression in every type except possibly $E_8$ and $F_4$, and we give detailed examples in the classical types. We do not consider type $G_2$ simply because the rank 2 calculation is more difficult.
The method in \cite{CT15} does not generalize easily outside of type $A_n$, and the proof here is quite different. 

Reineke \cite{reineke} has also given explicit rules for calculating crystal operators on certain PBW monomials/Kostant partitions, which he does by identifying a Kostant partition with an isomorphism class of quiver representation.  However, our method works in different generality: Reineke requires the reduced expression to be adapted to some orientation of the Dynkin diagram, while we require a ``simply braided" reduced expression (see Definition \ref{def:sa}). For example, the reduced expression we use for type $D_n$ in \S\ref{ss:tdn} is not adapted to any orientation. Kwon \cite{kwon1} recently showed that Reineke's structure does come from a bracketing rules in some cases (type $A$ when the orientation has a single sink), and it would be interesting to compare that to the construction here.

\section{Background}

Let $\mathfrak{g}$ be a simple, finite-dimensional Lie algebra over $\bc$.
Let $I$ be the index set of $\mathfrak{g}$, $A = (a_{ij})$ the Cartan matrix, $\{ \alpha_i \}_{i \in I}$ the positive simple roots, $\{\alpha_i^\vee\}_{i\in I}$ the simple coroots, $\Phi$ the set of roots,  $\Phi^+ \subset \Phi$ the positive roots, $P$ the weight lattice, $P^\vee$ the dual weight lattice, $W$ the Weyl group with longest element $w_0$, and $\{ s_i \}_{i \in I}$ the generating simple reflections. If $\mathfrak{g}$ is not simply-laced, let $\mathrm{diag}(d_i : i\in I)$ be the symmetrizing matrix of $A$.  Let $(-|-)$ denote the symmetric, $W$-invariant bilinear form on $P$ satisfying $(\alpha_i|\alpha_j) = d_ia_{ij}$ and let $\langle-,-\rangle \colon P^\vee \times P \longrightarrow \bz$ be the canonical pairing. Denote the set of all reduced expressions for the longest element of the Weyl group by $R(w_0)$.  Elements of $R(w_0)$ may also be referred to as reduced long words, or simply as reduced words.
Enumerate the Dynkin diagram of $\mathfrak{g}$ following Bourbaki \cite{bourbaki} (see Figure \ref{fig:diagrams}).

\begin{figure}
\[
\begin{array}{cc}
A_n: 
\begin{tikzpicture}[baseline=-3.5,scale=1,font=\scriptsize]
\foreach \x in {1,2,4,5}
{\node[dynkdot] (\x) at (\x,0) {};}
\node[label={below:$\alpha_1$}] at (1,0) {}; 
\node[label={below:$\alpha_2$}] at (2,0) {}; 
\node[label={below:$\alpha_{n-1}$}] at (4,0) {}; 
\node[label={below:$\alpha_n$}] at (5,0) {}; 
\node at (3,0) {$\cdots$};
\draw[-] (1) -- (2);
\draw[-] (2) -- (2.75,0);
\draw[-] (3.25,0) -- (4);
\draw[-] (4) -- (5);
\end{tikzpicture} 
&
D_n:
\begin{tikzpicture}[baseline,scale=1,font=\scriptsize]
\foreach \x in {1,2,4}
{\node[circle,draw,scale=.45] (\x) at (\x,0) {};}
\node[label={below:$\alpha_1$}] at (1,0) {}; 
\node[label={below:$\alpha_2$}] at (2,0) {}; 
\node[label={below:$\alpha_{n-2}$}] at (4,0) {}; 
\node[label={above:$\alpha_{n-1}$}] at (5,.5) {}; 
\node[label={below:$\alpha_{n}$}] at (5,-.5) {};
\node at (3,0) {$\cdots$};
\node[circle,draw,scale=.45] (5) at (5,.5) {};
\node[circle,draw,scale=.45] (6) at (5,-.5) {};
\draw[-] (1) -- (2);
\draw[-] (2) -- (2.75,0);
\draw[-] (3.25,0) -- (4);
\draw[-] (4) -- (5);
\draw[-] (4) -- (6);
\end{tikzpicture}
\\[30pt]
B_n: \ 
\begin{tikzpicture}[baseline=-5,scale=1,font=\scriptsize]
\foreach \x in {1,2,4,5}
{\node[dynkdot] (\x) at (\x,0) {};}
\node[label={below:$\alpha_1$}] at (1,0) {}; 
\node[label={below:$\alpha_2$}] at (2,0) {}; 
\node[label={below:$\alpha_{n-1}$}] at (4,0) {}; 
\node[label={below:$\alpha_n$}] at (5,0) {}; 
\node at (3,0) {$\cdots$};
\draw[-] (1.east) -- (2.west);
\draw[-] (2.east) -- (2.75,0);
\draw[-] (3.25,0) -- (4.west);
\draw[-] (4.30) -- (5.150);
\draw[-] (4.330) -- (5.210);
\draw[-] (4.55,0) -- (4.45,.1);
\draw[-] (4.55,0) -- (4.45,-.1);
\end{tikzpicture}
&
C_n: \
\begin{tikzpicture}[baseline=-5,scale=1,font=\scriptsize]
\foreach \x in {1,2,4,5}
{\node[dynkdot] (\x) at (\x,0) {};}
\node[label={below:$\alpha_1$}] at (1,0) {}; 
\node[label={below:$\alpha_2$}] at (2,0) {}; 
\node[label={below:$\alpha_{n-1}$}] at (4,0) {}; 
\node[label={below:$\alpha_n$}] at (5,0) {}; 
\node at (3,0) {$\cdots$};
\draw[-] (1.east) -- (2.west);
\draw[-] (2.east) -- (2.75,0);
\draw[-] (3.25,0) -- (4.west);
\draw[-] (4.30) -- (5.150);
\draw[-] (4.330) -- (5.210);
\draw[-] (4.45,0) -- (4.55,.1);
\draw[-] (4.45,0) -- (4.55,-.1);
\end{tikzpicture}
\end{array}
\]
\caption{\label{fig:diagrams} Dynkin diagrams of classical type following Bourbaki \cite{bourbaki}.}
\end{figure}

Let $U_q(\mathfrak{g})$ be the quantized universal enveloping algebra of $\mathfrak{g}$, which is a $\bq(q)$-algebra generated by $E_i$, $F_i$, and $q^h$, for $i\in I$ and $h\in P^\vee$, subject to certain relations (see, for example, \cite{HK02}). Let $U_q^-(\mathfrak{g})$ be the subalgebra generated by the $F_i$'s. The star involution is the involutive $\bq(q)$-algebra antiautomorphism $*\colon U_q(\mathfrak{g}) \longrightarrow U_q(\mathfrak{g})$ defined by
\[
E_i^* = E_i, \ \ \ 
F_i^* = F_i, \ \ \ 
%q^* = q, \ \ \ 
(q^h)^* = q^{-h}.
\]

\subsection{Crystals}

Let $e_{i}$, $f_{i}$ be the Kashiwara operators on $U_q^-(\mathfrak{g})$ defined in \cite{K91}. Let $\mathcal{A} \subset \bq(q)$ be the subring of functions regular at $q=0$ and define $L(\infty)$ to be the $\mathcal{A}$-lattice spanned by
\[
S= \{  f_{i_1}  f_{i_2} \cdots  f_{i_t} \cdot 1 \in U_q^-(\mathfrak{g}) : t\ge 0, \ i_k \in I \}.
\]
Let $e_i^* = *\circ e_i \circ *$ and $f_i^* = *\circ f_i \circ *$ be the operators twisted by the $*$-involution.

\begin{ThmDef}[\cite{K91}] \hfill 
\begin{enumerate}
\item Let $\pi \colon L(\infty) \longrightarrow L(\infty)/qL(\infty)$ be the natural projection and set $B(\infty) = \pi(S)$. Then $B(\infty)$ is a $\bq$-basis of $L(\infty)/qL(\infty)$.
\item For each $i\in I$ the operators $e_i$ and $f_i$ act on $L(\infty)/qL(\infty)$. Moreover, $e_i\bigl(B(\infty)\bigr) = B(\infty)\sqcup \{\ghost\}$ and $f_i\bigl(B(\infty)\bigr) \subset B(\infty)$.
\item The involution $*$ preserves $L(\infty)$ and $B(\infty)$. Hence $e_i^*$ and $f_i^*$ act on $B(\infty)$. 
\end{enumerate}
\end{ThmDef}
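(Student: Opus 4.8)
The plan is to follow Kashiwara's original strategy: reduce every assertion to the interaction of a single $f_i$ with an ``$i$-string'' decomposition of $U_q^-(\mathfrak{g})$, and then run all three parts as a single simultaneous induction (the \emph{grand loop}) on the height of the weight, so that the parts reinforce one another rather than being proved in isolation. First I would recall the local structure. For each $i$ there is a $q$-twisted derivation $e_i'$ on $U_q^-(\mathfrak{g})$ determined by $e_i'(F_j)=\delta_{ij}$ and a twisted Leibniz rule, and every $u$ has a unique expansion $u=\sum_{n\ge 0} F_i^{(n)} u_n$ with $e_i' u_n = 0$; the Kashiwara operators are then the shifts $e_i u = \sum_{n\ge 1} F_i^{(n-1)} u_n$ and $f_i u = \sum_{n\ge 0} F_i^{(n+1)} u_n$. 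The main analytic tool is Kashiwara's symmetric bilinear form $(-|-)$ on $U_q^-(\mathfrak{g})$, normalized by $(1|1)=1$ and characterized by an adjunction making left multiplication by $F_i$ adjoint, up to a scalar, to $e_i'$. This form is what makes part (i) tractable: it descends to a form on $L(\infty)/qL(\infty)$ under which the images of the monomials in $S$ become almost orthonormal, so that they form an orthonormal basis and hence a $\bq$-basis.

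Next comes the inductive package. The three statements are genuinely interdependent, since stability of $L(\infty)$ under $e_i$ uses the basis property at lower weight, the basis property uses the string decomposition, and part (iii) feeds back into both, so I would bundle them into one list of claims on the weight-$(-\beta)$ graded piece and induct on $\ell = \mathrm{ht}(\beta)$. The claims are: (a) $e_i'$, $e_i$, $f_i$ all preserve $L(\infty)$, giving the first sentence of part (ii); (b) the string decomposition $L(\infty)_\beta = \bigoplus_n f_i^{\,n}\bigl(L(\infty)\cap \ker e_i\bigr)$ holds, together with its reduction modulo $q$; (c) on $B(\infty)$ the operator $f_i$ is injective, $e_i f_i = \mathrm{id}$, and $e_i$ sends $B(\infty)$ into $B(\infty)\sqcup\{\ghost\}$, which is the second assertion of part (ii); and (d) $B(\infty)$ is a basis, which is part (i). Within a fixed $i$-string each claim collapses to a rank-one $\mathfrak{sl}_2$ computation, so the real work is gluing the strings across different $i$ and advancing one unit of height, and the almost-orthonormality supplied by the bilinear form is exactly what keeps this gluing linear-algebraic instead of circular.

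The crux, and the step I expect to be the main obstacle, is part (iii) together with the closure of the induction, both of which hinge on controlling the $i$-string and its $*$-twist at the same time. Here I would prove Kashiwara's key lemma on the joint kernel: for any $i$ and $j$, not necessarily distinct, the subspace $\ker e_i' \cap \ker (e_j')^{*}$ is simultaneously compatible with both string filtrations, which yields the commutation relations between $e_i, f_i$ and $e_j^{*}, f_j^{*}$ and a direct-sum decomposition of $U_q^-(\mathfrak{g})$ adapted to both. Because $*$ is an antiautomorphism fixing each $F_i$, it interchanges the left construction (via $f_i$) with the right construction (via $f_i^{*}$); the joint-kernel lemma shows these two constructions produce the \emph{same} lattice and the \emph{same} crystal, which is precisely the statement that $*$ preserves $L(\infty)$ and $B(\infty)$. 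Once this two-sided compatibility is in hand, propagating the lattice and basis statements from the kernels up through the full strings is routine, and the induction closes. I expect nearly all the difficulty to concentrate in establishing this simultaneous control of the starred and unstarred operators, with the rank-one analysis and the mod-$q$ linear algebra comparatively mechanical by contrast.
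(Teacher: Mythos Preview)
The paper does not prove this statement at all: it is recorded as a Theorem/Definition with a citation to Kashiwara's original paper \cite{K91} and is treated as background. So there is nothing in the paper to compare your argument to.

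That said, your sketch is a faithful outline of Kashiwara's own proof in \cite{K91}: the $i$-string decomposition via the derivation $e_i'$, the almost-orthonormality coming from the symmetric bilinear form, and the simultaneous induction on height (the ``grand loop'') bundling the lattice stability, the basis property, and the $*$-compatibility into one package. Your identification of the joint-kernel/commutation step for the starred and unstarred operators as the crux is also accurate. As a plan there is no genuine gap; just be aware that turning this into a complete proof is substantial work---Kashiwara's grand loop in \cite{K91} tracks a long list of interlocking claims at each height, and the rank-one and bilinear-form computations, while ``mechanical,'' are lengthy.
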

 
For $i\in I$ and $b\in B(\infty)$, define
\begin{align*}
\varepsilon_i(b) = \max\{ k \in \bz_{\ge0} : e_i^kb \neq \ghost \}, \qquad
\varepsilon_i^*(b) = \max\{ k \in \bz_{\ge0} : (e_i^*)^kb \neq \ghost \}.
\end{align*}
Consider the weight map $\wt\colon B(\infty) \longrightarrow P$ defined by 
\[
\wt(f_{i_1} f_{i_2} \cdots f_{i_t} \cdot 1 ) = -\alpha_{i_1}-\alpha_{i_2}-\cdots-\alpha_{i_t}.
\]
The next proposition follows from \cite[Prop.~3.2.3]{KS97} (see also \cite[Prop.~1.4]{TW12}).  

\begin{Proposition}
\label{Prop:TW}
For all $b\in B(\infty)$ and all $i\neq j$ in $I$, we have
\begin{enumerate}
\item $f_i(b), f_i^*(b) \neq \ghost$,
\item $f_i^*f_j(b) = f_jf_i^*(b)$, 
\item $\varepsilon_i(b) + \varepsilon_i^*(b) + \langle\alpha_i^\vee,\wt(b)\rangle \ge 0$,
\item $\varepsilon_i(b) + \varepsilon_i^*(b) + \langle\alpha_i^\vee,\wt(b)\rangle = 0$ implies $f_i(b) = f_i^*(b)$,
\item $\varepsilon_i(b) + \varepsilon_i^*(b) + \langle\alpha_i^\vee,\wt(b)\rangle \ge 1$ implies $\varepsilon_i^*\bigl(f_i(b)\bigr) = \varepsilon_i^*(b)$ and $\varepsilon_i\bigl(f_i^*(b)\bigr) = \varepsilon_i(b)$, 
\item $\varepsilon_i(b) + \varepsilon_i^*(b) + \langle\alpha_i^\vee,\wt(b)\rangle \ge 2$ implies $f_if_i^*(b) = f_i^*f_i(b)$. 
\end{enumerate} 
\end{Proposition}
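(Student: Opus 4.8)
The plan is to reduce all six properties to the local structure of $B(\infty)$ with respect to a single index, exploiting the fact that the twisted operators $e_i^*, f_i^*$ encode the ``opposite-side'' crystal structure. Concretely, under the identification of $B(\infty)$ with images of monomials $f_{i_1}\cdots f_{i_t}\cdot 1$, the operator $f_i$ governs modification at one end of such a word while $f_i^* = *\circ f_i\circ *$ governs modification at the other end; this bicrystal picture is the source of the interaction recorded in (i)--(vi). The cleanest technical device is Kashiwara's strict embedding
\[
\Psi_i\colon B(\infty) \hookrightarrow B(\infty)\otimes B_i,
\]
where $B_i$ is the elementary crystal associated to $i$, with underlying set $\{b_i(n) : n\in\bz\}$ and $\wt(b_i(n)) = n\alpha_i$, together with its $*$-twist, which realizes the $e_i^*, f_i^*$ structure on the other tensor factor. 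Applying the tensor product rule for crystals to these embeddings converts each statement into a finite computation inside a rank-one configuration.

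I would then dispatch the properties in two groups. Properties (i) and (ii) are the ``independence'' statements: since $f_i$ and $f_i^*$ act by adding a generator at opposite ends of a word, neither can produce $\ghost$ (using that $*$ is a bijection of $B(\infty)$ sending $f_i^*(b)$ to $*\bigl(f_i(*b)\bigr)$), and for $i\neq j$ the operators $f_j$ and $f_i^*$ act at the two ends and hence commute. For the remaining properties I would set $c(b) = \varepsilon_i(b) + \varepsilon_i^*(b) + \langle \alpha_i^\vee, \wt(b)\rangle$ and read off from the tensor product rule that $c(b)\ge 0$ always, giving (iii). The quantity $c(b)$ measures the slack between the $i$- and $i^*$-strings through $b$, and the remaining cases are governed by it: $c(b)=0$ forces the two strings to coincide at $b$, yielding $f_i(b) = f_i^*(b)$ in (iv); $c(b)\ge 1$ leaves enough room that applying $f_i$ does not disturb the $i^*$-string length and vice versa, giving (v); and $c(b)\ge 2$ lets $f_i$ and $f_i^*$ act on genuinely independent parts of the configuration, yielding the commutation $f_if_i^*(b) = f_i^*f_i(b)$ in (vi). Each of these is a direct case check once the rank-one model is in place.

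The main obstacle is entirely contained in establishing that rank-one model, i.e.\ in the structure theorem describing how the $i$- and $i^*$-operators interlock on $B(\infty)$; this is the substantive input and is exactly what \cite[Prop.~3.2.3]{KS97} (and \cite[Prop.~1.4]{TW12}) provide. Given that result, the work reduces to matching conventions---signs in the weight pairing, the placement of the elementary crystal factor, and the normalization of $\varepsilon_i$ versus $\varepsilon_i^*$---and then verifying the threshold dichotomies $c(b) = 0$, $c(b)\ge 1$, and $c(b)\ge 2$. I would therefore present the argument as a reduction to the cited structure theorem rather than reproving Kashiwara's embedding, flagging only the bookkeeping needed to align the two sets of conventions.
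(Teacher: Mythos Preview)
Your proposal is correct and matches the paper's treatment: the paper does not give a proof but simply records that the proposition follows from \cite[Prop.~3.2.3]{KS97} (see also \cite[Prop.~1.4]{TW12}), which is exactly the reduction you describe. If anything, your sketch of the rank-one argument via Kashiwara's embedding $\Psi_i$ is more detailed than what the paper provides.
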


\begin{Corollary}[{\cite[Cor.~1.5]{TW12}}]
For any fixed $i\in I$ and $b\in B(\infty)$, the subset of $B(\infty)$ that can be reached from $b$ by applying sequences of the operators $e_i,f_i,e_i^*,f_i^*$ is of the following form, where the width of the diagram at the bottom is $\langle \alpha_i^\vee, \wt(b_{\mathrm{top}}) \rangle$ and $b_{\mathrm{top}}$ is the vertex at the top of the diagram. Here the width is $4$.
\[
\begin{tikzpicture}[xscale=1.0,yscale=.65]
\foreach \x in {1,...,5}
{\foreach \y in {1,...,\x}
 {\node[dot] (\y\x) at (-\x+2*\y,-\x) {};}
  \node[dot] (\x6) at (2*\x-5,-6.25) {}; 
}
\path[->,thick]
 (11) edge (22) edge (33) edge (44) edge (55)
 (12) edge (23) edge (34) edge (45)
 (13) edge (24) edge (35)
 (14) edge (25);
% (-3,-2) edge node[above]{$f_i$} (-2,-2);
\path[->,dotted,thick]
 (11) edge (12) edge (13) edge (14) edge (15)
 (22) edge (23) edge (24) edge (25)
 (33) edge (34) edge (35)
 (44) edge (45);
% (-3,-3) edge node[above]{$f_i^*$} (-2,-3);
%\path[->,dashed,thick]
% (4,-3) edge node[above]{$f_i=f_i^*$} (5,-3);
\foreach \x in {1,...,5}
{\path[->,dashed,thick] (\x5) edge (\x6);
 \path[->,dashed,thick] (\x6) edge (2*\x-5,-7.25);}
\end{tikzpicture}
\]
Here $f_i$ acts on a vertex by following the solid or dashed arrow, and $f_i^*$ acts by following the dotted or dashed arrow. 
\end{Corollary}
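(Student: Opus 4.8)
The plan is to fix $i$ and describe a single connected component of $B(\infty)$ under $e_i,f_i,e_i^*,f_i^*$, abbreviating $\phi(b)=\varepsilon_i(b)+\varepsilon_i^*(b)+\langle\alpha_i^\vee,\wt(b)\rangle$ (so $\phi(b)\ge 0$ by Proposition~\ref{Prop:TW}(iii)). First I would locate the apex: starting from any $b$ and applying $e_i,e_i^*$ as often as possible raises $\wt$ by $\alpha_i$ each time and so must terminate, producing an element $b_{\mathrm{top}}$ with $\varepsilon_i(b_{\mathrm{top}})=\varepsilon_i^*(b_{\mathrm{top}})=0$; thus $\phi(b_{\mathrm{top}})=\langle\alpha_i^\vee,\wt(b_{\mathrm{top}})\rangle=:N$. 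Because $e_i,e_i^*$ are the partial inverses of $f_i,f_i^*$, every vertex of the component is obtained from $b_{\mathrm{top}}$ by a word in $f_i,f_i^*$, so it suffices to follow these descents.

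Next I would set up coordinates. For $p,q\ge 0$ with $p+q\le N$ put $b_{p,q}=(f_i^*)^q f_i^{\,p}\,b_{\mathrm{top}}$. Using that $f_i$ raises $\varepsilon_i$ by one together with Proposition~\ref{Prop:TW}(v)—which says $f_i$ fixes $\varepsilon_i^*$ and $f_i^*$ fixes $\varepsilon_i$ wherever $\phi\ge 1$—one gets inductively $\varepsilon_i(b_{p,q})=p$, $\varepsilon_i^*(b_{p,q})=q$, and $\phi(b_{p,q})=N-p-q$ throughout this triangular region. Reading $(p,q)=(\varepsilon_i,\varepsilon_i^*)$ off the element shows $(p,q)\mapsto b_{p,q}$ is injective. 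Combining (v) with the commutativity $f_if_i^*=f_i^*f_i$ of Proposition~\ref{Prop:TW}(vi)—valid when $\phi\ge 2$, which covers every commutation needed to fill the triangle down to its width-$N$ bottom row—shows that on $\{\phi\ge 1\}$ the operator $f_i$ sends $b_{p,q}\mapsto b_{p+1,q}$ and $f_i^*$ sends $b_{p,q}\mapsto b_{p,q+1}$. These are the solid ($f_i$) and dotted ($f_i^*$) arrows of the triangle.

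Then I would handle the bottom row and the tails. On the locus $\phi=0$, that is $p+q=N$, Proposition~\ref{Prop:TW}(iv) gives $f_i(b)=f_i^*(b)$; this common operator increments both $\varepsilon_i$ and $\varepsilon_i^*$ while lowering $\langle\alpha_i^\vee,\wt\rangle$ by $2$, hence preserves $\phi=0$, and by Proposition~\ref{Prop:TW}(i) it never yields $\ghost$. So from each of the $N+1$ bottom vertices descends an infinite chain on which $f_i=f_i^*$ (the dashed arrows), and the pair $(\varepsilon_i,\varepsilon_i^*)$ again separates all of these and distinguishes them from the triangle, where $\varepsilon_i+\varepsilon_i^*\le N$ as opposed to $\ge N+2$ along the chains. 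This exhibits the whole diagram: a triangle of $N+1$ rows with $N+1$ dashed tails and bottom width $N=\langle\alpha_i^\vee,\wt(b_{\mathrm{top}})\rangle$. Since the resulting set is closed under all four operators and contains $b_{\mathrm{top}}$, it is the entire component, and in particular $b_{\mathrm{top}}$ is its unique apex.

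The main obstacle is the consistency of the grid across the boundary between $\{\phi\ge 1\}$ and $\{\phi=0\}$. In the interior the square generated by $f_i$ and $f_i^*$ must close up, and this relies on Proposition~\ref{Prop:TW}(vi), which fails for $\phi=1$: at a vertex with $\phi=1$ the two operators do \emph{not} commute but instead carry it to the two \emph{distinct} bottom vertices it covers, with coordinates $(\varepsilon_i+1,\varepsilon_i^*)$ and $(\varepsilon_i,\varepsilon_i^*+1)$. Verifying that the interior grid therefore closes precisely onto the width-$N$ bottom row—after which parts (iv) and (i) take over to generate the dashed tails—is the step where the clauses of Proposition~\ref{Prop:TW} must be assembled most carefully.
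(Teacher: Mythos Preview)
The paper does not give its own proof of this Corollary; it is simply cited from \cite[Cor.~1.5]{TW12} as a consequence of Proposition~\ref{Prop:TW}. Your argument is correct and is exactly the intended derivation from that proposition: locate the apex $b_{\mathrm{top}}$ by raising, coordinatize the triangle by $(\varepsilon_i,\varepsilon_i^*)$ using parts (v) and (vi) to propagate these values and close up the grid, and invoke part (iv) together with (i) on the locus $\phi=0$ to produce the dashed tails; injectivity via $(\varepsilon_i,\varepsilon_i^*)$ and closure under $e_i,e_i^*$ then pin down the component. There is nothing further to compare.
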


\subsection{Reduced expressions and convex orders}

\begin{Definition}
A total order $\prec$ on $\Phi^+$ is called \defn{convex} if, for all triples of roots $\beta, \beta',\beta''$ with $\beta'=\beta+\beta''$, we have either $\beta\prec\beta'\prec\beta''$ or $\beta''\prec\beta'\prec\beta$.
\end{Definition}

\begin{Theorem}[\cite{papi}]\label{thm:papi}
There is a bijection between $R(w_0)$ and convex orders on $\Phi^+$: if $w_0 = s_{i_1}s_{i_2}\cdots s_{i_N}$, then the corresponding convex order $\prec$
is 
\[
\beta_1 = \alpha_{i_1} \ \ \prec \ \ 
\beta_2 = s_{i_1}\alpha_{i_2} \ \ \prec \ \ \cdots \ \ \prec \ \ 
\beta_N = s_{i_1}s_{i_2}\cdots s_{i_{N-1}}\alpha_{i_N}.
\]
\end{Theorem}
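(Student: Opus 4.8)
The plan is to verify in turn that the prescription defines a genuine total order on $\Phi^+$, that this order is convex, and that the assignment $w_0 \mapsto {\prec}$ is a bijection onto the set of convex orders. Throughout write $\mathrm{Inv}(w) = \{\alpha \in \Phi^+ : w^{-1}\alpha \in \Phi^-\}$ for the inversion set of $w \in W$, and for a reduced word $w_0 = s_{i_1}\cdots s_{i_N}$ set $w_{(k)} = s_{i_1}\cdots s_{i_k}$. First I would invoke the standard inversion lemma (see, e.g., \cite{bourbaki}): for any reduced expression $w = s_{i_1}\cdots s_{i_\ell}$, the roots $\beta_k = s_{i_1}\cdots s_{i_{k-1}}\alpha_{i_k}$, $1 \le k \le \ell$, are \emph{distinct} positive roots and exhaust $\mathrm{Inv}(w)$; in particular $\{\beta_1, \dots, \beta_k\} = \mathrm{Inv}(w_{(k)})$. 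Since $w_0$ is the longest element, $N = \ell(w_0) = |\Phi^+|$ and $\mathrm{Inv}(w_0) = \Phi^+$, so $\beta_1, \dots, \beta_N$ enumerates all of $\Phi^+$ without repetition, and $\prec$ is a well-defined total order.

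For convexity, the key observation is that each inversion set $\mathrm{Inv}(w)$ is \emph{biconvex}: if $\alpha, \beta \in \mathrm{Inv}(w)$ and $\alpha + \beta \in \Phi^+$, then $w^{-1}(\alpha+\beta) = w^{-1}\alpha + w^{-1}\beta$ is a root that is a sum of two negative roots, hence negative, so $\alpha + \beta \in \mathrm{Inv}(w)$; and the identical computation for roots \emph{not} in $\mathrm{Inv}(w)$ shows the complement $\Phi^+ \setminus \mathrm{Inv}(w)$ is closed under addition as well. Applying this to every prefix $w_{(k)}$, each initial segment $\{\beta_1, \dots, \beta_k\}$ is biconvex. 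Now suppose $\beta' = \beta + \beta''$ with all three in $\Phi^+$ and, say, $\beta \prec \beta''$. If we had $\beta' \prec \beta$, then the initial segment $S = \{\gamma : \gamma \prec \beta\}$ would contain $\beta'$ but neither $\beta$ nor $\beta''$, contradicting closedness of its complement; and if we had $\beta'' \prec \beta'$, the initial segment $\{\gamma : \gamma \preceq \beta''\}$ would contain $\beta$ and $\beta''$ but not $\beta'$, contradicting its closedness. Hence $\beta \prec \beta' \prec \beta''$, which is exactly convexity.

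Finally I would establish bijectivity. Injectivity is immediate from the inversion lemma: $\beta_1 = \alpha_{i_1}$ recovers $i_1$, and inductively $\alpha_{i_k} = s_{i_{k-1}}\cdots s_{i_1}\beta_k$ is simple and recovers $i_k$, so the order determines the word. For surjectivity I would argue by induction on $\ell(w)$ the sharper claim that total orders on $\mathrm{Inv}(w)$ all of whose initial segments are biconvex correspond bijectively to reduced words for $w$; the theorem is the case $w = w_0$, where $\mathrm{Inv}(w_0) = \Phi^+$ and ``all initial segments biconvex'' is equivalent to convexity by the argument above. Given such an order, its minimum $\gamma_1$ spans a biconvex set $\{\gamma_1\}$ of size one, which must be a single simple root: indeed any non-simple positive root $\beta$ satisfies $\beta - \alpha_i \in \Phi^+$ for some $i$ in its support (where $(\beta, \alpha_i) > 0$), so $\beta = \alpha_i + (\beta - \alpha_i)$ would force $\alpha_i \prec \beta$ by convexity and $\beta$ could not be minimal. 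Writing $\gamma_1 = \alpha_{i_1}$, one has $\ell(s_{i_1}w) = \ell(w) - 1$ and $\mathrm{Inv}(s_{i_1}w) = s_{i_1}\bigl(\mathrm{Inv}(w) \setminus \{\alpha_{i_1}\}\bigr)$; transporting the order through the linear map $s_{i_1}$, which preserves positivity on these roots, yields a biconvex-chain order on $\mathrm{Inv}(s_{i_1}w)$, which by induction comes from a reduced word $s_{i_2}\cdots s_{i_N}$, and then $s_{i_1}s_{i_2}\cdots s_{i_N}$ is the desired reduced word for $w$.

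I expect the surjectivity step to be the main obstacle, since it requires the recursive ``peel off the minimal simple root'' construction together with the checks that the minimum is forced to be simple and that biconvexity of initial segments is preserved under transport by $s_{i_1}$. The well-definedness and convexity steps, by contrast, reduce quickly to the inversion lemma and the biconvexity computation.
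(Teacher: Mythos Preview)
The paper does not prove this theorem: it is quoted as a known result with the citation \cite{papi} and no proof is given in the body of the paper. So there is nothing to compare against, and your proposal is really a reconstruction of Papi's argument rather than of anything the authors wrote.

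Your proof is essentially correct and is the standard one. One small point of phrasing in the surjectivity step: when you argue that the minimum $\gamma_1$ must be simple, you say that writing $\beta=\alpha_i+(\beta-\alpha_i)$ ``would force $\alpha_i\prec\beta$ by convexity.'' Strictly speaking, in the general inductive setup you are ordering $\mathrm{Inv}(w)$, and there is no a priori reason that $\alpha_i$ or $\beta-\alpha_i$ lies in $\mathrm{Inv}(w)$, so you cannot invoke the order relation directly. The cleaner statement is that $\{\gamma_1\}$ is biconvex in $\Phi^+$, and a singleton $\{\gamma\}\subseteq\Phi^+$ is biconvex if and only if $\gamma$ is simple (since any non-simple positive root is a sum of two positive roots, violating closedness of the complement). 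This is what your argument really shows; just adjust the wording. The check that transporting the order through $s_{i_1}$ preserves the biconvex-chain property is routine, as you indicate, using that $s_{i_1}$ permutes $\Phi^+\setminus\{\alpha_{i_1}\}$ together with closedness of $S$ and of $\Phi^+\setminus S$.
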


By Theorem \ref{thm:papi}, we identity convex orderings with reduced expressions of $w_0$.

\begin{Lemma} \label{lem:combine-orders} Fix two convex orders $\prec, \prec'$ on $\Phi^+$ such that, for some root $\beta$, 
$$\{ \alpha  \in \Phi^+: \alpha \prec \beta \} = \{ \alpha  \in \Phi^+: \alpha \prec' \beta \}.$$
Call this set $X$. One can make a hybrid convex order by ordering $X$ according to $\prec$ and $\Phi^+ \backslash X$ according to $\prec'$. 
\end{Lemma}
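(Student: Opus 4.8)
The plan is to check the convexity condition directly. Write $\prec''$ for the proposed hybrid order: it places every element of $X$ before every element of $\Phi^+\setminus X$, orders $X$ by $\prec$, and orders $\Phi^+\setminus X$ by $\prec'$. This is manifestly a total order, so the only thing to verify is that for every triple of positive roots with $\gamma'=\gamma+\gamma''$ one has $\gamma\prec''\gamma'\prec''\gamma''$ or $\gamma''\prec''\gamma'\prec''\gamma$. Before the case analysis I would record one preliminary fact: both $X$ and its complement are closed under those sums of positive roots that happen to be roots. Indeed, if $\gamma,\gamma''$ both lie in $X$, then convexity of $\prec$ forces $\gamma'$ to lie $\prec$-between them, hence $\gamma'\prec\max_\prec(\gamma,\gamma'')\prec\beta$, so $\gamma'\in X$; the complementary statement is the same computation with the inequalities reversed. (Note that $\beta$ is the minimum of $\Phi^+\setminus X$ in both orders.)

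With this in hand, the verification splits into three cases according to how many of $\gamma,\gamma''$ lie in $X$. If both lie in $X$, the closure fact puts all three roots in $X$, where $\prec''$ agrees with $\prec$, so convexity of $\prec$ gives the claim. If both lie in $\Phi^+\setminus X$, the same argument with $\prec'$ applies.

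The substantive case is when exactly one of $\gamma,\gamma''$ lies in $X$; say $\gamma\in X$ and $\gamma''\in\Phi^+\setminus X$ (the other assignment is symmetric and yields the reversed chain). Then $\gamma\prec''\gamma''$ by the $X$-first convention. The key point is that $\gamma$ precedes $\gamma''$ in \emph{both} original orders: from $\gamma\prec\beta\preceq\gamma''$ we get $\gamma\prec\gamma''$, and likewise $\gamma\prec'\gamma''$. Applying convexity of $\prec$ and of $\prec'$ separately therefore gives $\gamma\prec\gamma'\prec\gamma''$ and $\gamma\prec'\gamma'\prec'\gamma''$ simultaneously. Now whichever block $\gamma'$ falls into, the within-block inequality is supplied by the corresponding order while the cross-block inequality is automatic: if $\gamma'\in X$ then $\gamma'\prec''\gamma''$ is free and $\gamma\prec''\gamma'$ comes from $\gamma\prec\gamma'$; if $\gamma'\notin X$ then $\gamma\prec''\gamma'$ is free and $\gamma'\prec''\gamma''$ comes from $\gamma'\prec'\gamma''$. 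Either way $\gamma\prec''\gamma'\prec''\gamma''$.

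I expect the main obstacle to be precisely this last case, and in particular the temptation to pin down a priori in which block $\gamma'$ lands. The resolution I would emphasize is that this is unnecessary: because $X$ is an initial segment of both $\prec$ and $\prec'$, the inequality $\gamma\prec\gamma''$ holds in both orders at once, hence so does the betweenness of $\gamma'$, and exactly one of the two orders certifies the relevant within-block comparison no matter where $\gamma'$ sits. Everything else is bookkeeping.
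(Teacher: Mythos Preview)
Your argument is correct. The closure observation (that $X$ and its complement are each closed under root-sums) together with the three-case split cleanly verifies the convexity condition for $\prec''$, and your handling of the mixed case is the right one: since $X$ is an initial segment of \emph{both} $\prec$ and $\prec'$, the betweenness of $\gamma'$ holds in both orders simultaneously, so whichever block $\gamma'$ lands in supplies the needed within-block inequality.

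The paper, however, takes a rather different and much shorter route. It invokes Papi's bijection (Theorem~\ref{thm:papi}) between convex orders on $\Phi^+$ and reduced expressions for $w_0$: writing $\ii=(i_1,\dots,i_k,i_{k+1},\dots,i_N)$ and $\ii'=(i_1',\dots,i_k',i_{k+1}',\dots,i_N')$ for the words attached to $\prec$ and $\prec'$, the prefixes $s_{i_1}\cdots s_{i_k}$ and $s_{i_1'}\cdots s_{i_k'}$ represent the same Weyl group element (both have inversion set $X$), so the spliced word $\ii''=(i_1,\dots,i_k,i_{k+1}',\dots,i_N')$ is again a reduced expression for $w_0$, and by construction its associated convex order is exactly the hybrid. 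Your approach is more elementary and self-contained, avoiding any appeal to Papi's theorem; the paper's approach is a one-liner once that bijection is in hand and makes transparent the reduced-word interpretation of the hybrid order, which is what is actually used later (e.g.\ in Lemma~\ref{lemma:reverse_Littelmann}).
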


\begin{proof}
Let $|X|=k$, and consider the reduced words $\ii = (i_1,\dots,i_k,i_{k+1},\dots,i_N)$ and $\ii' = (i'_1,\dots,i'_k,i_{k+1}',\dots,i_N')$ related to $\prec$ and $\prec'$ respectively as in Theorem \ref{thm:papi}. Then $\ii'' = (i_1,\dots,i_k,i_{k+1}',\dots,i_N')$ is reduced and corresponds to the required convex order. 
\end{proof}

\begin{Definition} \label{def:lex-orders}
Fix an order $i_1, \ldots, i_n$ on $I$. Define a corresponding order on $\Phi^+$ as follows. 
For $\beta, \beta' \in \Phi^+$, let 
\[
\beta = \sum_{i\in I} p_i \alpha_i \ \ \ \text{ and } \ \ \ 
\beta'= \sum_{i\in I} p'_i \alpha_i.
\]
Then $\beta \prec \beta'$ if
\begin{enumerate}

\item $\min\{ k: p_{i_k} \neq 0 \} < \min\{ k: p'_{i_k} \neq 0 \} $, or

\item $\min\{ k: p_{i_k} \neq 0 \} = \min\{ k: p'_{i_k} \neq 0 \} $, and, for that $k$, 
$$\displaystyle \left(\frac{p_{i_{k+1}}}{p_{i_k}}, \frac{p_{i_{k+2}}}{p_{i_k}} , \dots, \frac{p_{i_n}}{p_{i_k}}\right) < \left(\frac{p'_{i_{k+1}}}{p'_{i_k}}, \frac{p'_{i_{k+2}}}{p'_{i_k}} , \dots, \frac{p'_{i_n}}{p'_{i_k}}\right)$$ in lexicographical order. 
\end{enumerate}
\end{Definition}

\begin{Example}
Consider type $D_4$ and the enumeration $i_1=1$, $i_2=2$, $i_3=3$, and $i_4=4$. The corresponding order on $\Phi^+$ from Definition \ref{def:lex-orders} is
\[
1 \prec 12 \prec 124 \prec 123 \prec 1234 \prec 12234 \prec 2 \prec 24 \prec 23 \prec 234 \prec 3 \prec 4,
\]
where, for example, $124$ means $\alpha_1+\alpha_2+\alpha_4$. 
This is different from the convex order corresponding to this enumeration in, for example, \cite{Leclerc} (which orders roots $c_1 \alpha_1+c_2\alpha_2+c_3\alpha_3+c_4\alpha_4$ by lexicographically ordering the set $(\frac{c_1}{c},\frac{c_2}{c},\frac{c_3}{c},\frac{c_4}{c})$ where $c=c_1+c_2+c_3+c_4$) since, in particular, the roots $124$ and $123$ are reversed. 
\end{Example}

\begin{Lemma} \label{lem:lexcon}
For any enumeration of $I$, the order on $\Phi^+$
from Definition \ref{def:lex-orders} is convex.
\end{Lemma}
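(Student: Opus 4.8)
We need to prove that the order on $\Phi^+$ defined by Definition \ref{def:lex-orders} is convex, meaning for any $\beta' = \beta + \beta''$ with all three in $\Phi^+$, either $\beta \prec \beta' \prec \beta''$ or $\beta'' \prec \beta' \prec \beta$.

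**The structure of the order.** The order is defined by first comparing the smallest index $k$ (in the enumeration) whose coefficient is nonzero — call this the "first support index" — and then, among roots sharing the same first support index $k$, comparing the normalized coefficient vectors $(p_{i_{k+1}}/p_{i_k}, \ldots, p_{i_n}/p_{i_k})$ lexicographically. So I should think of it as a lexicographic refinement: first sort by first support index, then by a projective/ratio vector.

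Let me write $\beta = \sum p_i \alpha_i$, $\beta'' = \sum p''_i \alpha_i$, and $\beta' = \beta + \beta''$, so $p'_i = p_i + p''_i$ with all coefficients nonnegative. Let me think about what convexity requires.

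---The plan is to verify the convexity condition directly for a triple $\beta' = \beta + \beta''$ of positive roots. Write $\beta = \sum_{i} p_i \alpha_i$ and $\beta'' = \sum_i p''_i \alpha_i$, so that $\beta'$ has coefficients $p'_i = p_i + p''_i$, all nonnegative. For a positive root $\gamma = \sum_i c_i \alpha_i$, let $\mathrm{fs}(\gamma) = \min\{k : c_{i_k} \neq 0\}$ denote the index of its first nonzero coefficient in the fixed enumeration. Since coefficients add and are nonnegative, $\mathrm{fs}(\beta') = \min(\mathrm{fs}(\beta), \mathrm{fs}(\beta''))$. I would then split into two cases according to whether $\mathrm{fs}(\beta)$ and $\mathrm{fs}(\beta'')$ agree.

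First, suppose $\mathrm{fs}(\beta) \neq \mathrm{fs}(\beta'')$; by symmetry assume $\mathrm{fs}(\beta) < \mathrm{fs}(\beta'')$, and set $k = \mathrm{fs}(\beta) = \mathrm{fs}(\beta')$. Part (i) of Definition \ref{def:lex-orders} immediately gives both $\beta' \prec \beta''$ and $\beta \prec \beta''$. Because $\beta''$ has no support at $i_k$, the leading coefficient satisfies $p'_{i_k} = p_{i_k}$, so $\beta$ and $\beta'$ are compared by part (ii); their ratio vectors first differ at coordinate $\mathrm{fs}(\beta'')$, where the entry of $\beta'$ is strictly larger, giving $\beta \prec \beta'$. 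Hence $\beta \prec \beta' \prec \beta''$, as required (with the reversed chain when $\mathrm{fs}(\beta'') < \mathrm{fs}(\beta)$).

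The substance is in the case $\mathrm{fs}(\beta) = \mathrm{fs}(\beta'') = k$, where all three roots share the first support index $k$. Writing $a = p_{i_k}$ and $c = p''_{i_k}$ (both positive), and letting $v_\beta, v_{\beta''}$ denote the ratio vectors $(c_{i_{k+1}}/c_{i_k}, \dots, c_{i_n}/c_{i_k})$ of $\beta$ and $\beta''$, a direct computation shows that the ratio vector of $\beta'$ is the weighted average
\[
v_{\beta'} = \frac{a\, v_\beta + c\, v_{\beta''}}{a + c},
\]
a strict convex combination of $v_\beta$ and $v_{\beta''}$. The key observation is then purely about the lexicographic order: if $v <_{\mathrm{lex}} w$ first differ at coordinate $j_0$, then any strict convex combination of $v$ and $w$ agrees with both before $j_0$ and takes a value strictly between $v_{j_0}$ and $w_{j_0}$ at $j_0$, hence lies strictly between $v$ and $w$ lexicographically. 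Applying this yields $\beta \prec \beta' \prec \beta''$ or $\beta'' \prec \beta' \prec \beta$ according to whether $v_\beta <_{\mathrm{lex}} v_{\beta''}$ or the reverse.

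One small point must be checked to guarantee strictness, namely that $v_\beta \neq v_{\beta''}$: equal ratio vectors together with a common first support index would force $\beta$ and $\beta''$ to be positive scalar multiples of one another, which is impossible for distinct positive roots. I expect the only genuinely delicate step to be the convex-combination argument for the lexicographic order; once that lemma is isolated, the remainder is bookkeeping with the two cases.
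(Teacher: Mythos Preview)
Your proof is correct and follows essentially the same approach as the paper's: both split into the cases $\mathrm{fs}(\beta)\neq\mathrm{fs}(\beta'')$ and $\mathrm{fs}(\beta)=\mathrm{fs}(\beta'')$, and in the latter case both observe that the ratio vector of $\beta'$ lies strictly between those of $\beta$ and $\beta''$ at the first coordinate where they differ (you phrase this as a convex-combination lemma, the paper just writes down the inequality at that coordinate). Your explicit check that $v_\beta\neq v_{\beta''}$, via the fact that distinct positive roots in a reduced root system are not proportional, is a detail the paper leaves implicit.
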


\begin{proof}
It is immediate that $\prec$ defines a total order on $\Phi^+$. Fix $\beta = \sum_{i \in I} p_i\alpha_i$, $\beta' = \sum_{i \in I} p_i'\alpha_i$, and $\beta'' = \sum_{i \in I} p_i''\alpha_i$ such that $\beta' = \beta+\beta''$. Without loss of generality, assume $\beta \prec \beta''$. Let $m =\text{min} \{ k: p_{i_k} \neq 0 \} $, and similarly for $m'$ and $m''$. 

If $m < m''$, then $m'= m< m''$, so $\beta'\prec\beta''$. Also, $p_{i_m}=p'_{i_m}$ and, for each $j>m$, $p_
{i_j}' \geq p_{i_j}$, with at least one of these inequalities being strict, so $\beta\prec\beta'$.

If $m=m''$, then this is also equal to $m'$. Let $s>m$ be minimal such that 
$\displaystyle 
\frac{p_{i_s}}{p_{i_m}} < \frac{p''_{i_s}}{p''_{i_m}}.
$
\[ \text{Clearly } \qquad  
\frac{p_{i_s}}{p_{i_m}} < \frac{p'_{i_s}}{p'_{i_m}} <  \frac{p''_{i_s}}{p''_{i_m}}
\ \ \ \text{ and } \ \ \
\frac{p_{i_j}}{p_{i_m}} = \frac{p'_{i_j}}{p'_{i_m}} = \frac{p''_{i_j}}{p''_{i_m}}
\]
for all $m<j<s$,
so $\beta\prec\beta'\prec\beta''$, as required. 
\end{proof}

\subsection{PBW bases and enumerations of $B(\infty)$ by Kostant partitions} \label{sec:PBWe}
For $c\in \bz_{> 0}$, define 
\[
F_i^{(c)} := \frac{F_i^c}{[c]!} \quad \text{ where} \quad [c]! := \prod_{j=1}^c
\frac{q^{j}-q^{-j}}{q-q^{-1}}.
\]
Given $\ii = (i_1,\dots,i_N)\in R(w_0)$ and $\cc = (c_\beta^\ii \in \bz_{\ge0}^N : \beta\in\Phi^+)$, define
\begin{equation}\label{eq:lustparam}
F_\ii^{\cc} = F_{\ii:\beta_1}^{(c_{\beta_1}^\ii)}F_{\ii:\beta_2}^{(c_{\beta_2}^\ii)}\cdots F_{\ii:\beta_N}^{(c_{\beta_N}^\ii)}
\ \ \ 
\text{ where }
\ \ \ 
F_{\ii:\beta_k}^{(c_{\beta_k}^\ii)} = T_{i_1}T_{i_2}\cdots T_{i_{k-1}}(F_{i_k}^{(c_{\beta_k}^\ii)}),
\end{equation}
and $T_i$ is the Lusztig automorphism of $U_q(\mathfrak{g})$ defined in \cite[Section 37.1.3]{L10} (there, it is denoted $T''_{i,-1}$). Then the set $\CB_\ii = \{ F_\ii ^{\cc} : \cc \in \bz_{\ge0}^N \}$ forms a $\bq(q)$-basis of $U_q^-(\mathfrak{g})$, called the \defn{PBW basis}.  
The notation $\beta_k$ used in the subscript of $F_{\ii:\beta_k}$ is because, for all $k$,  
\begin{equation} 
\wt(F_{{\bf i}, \beta_k}) = -s_{i_1} \cdots s_{i_{k-1}}\alpha_{i_k} = -\beta_k. 
\end{equation}
These are exactly the negative roots. We index the root vectors by the corresponding positive roots $\beta_k$. When the context is clear we omit the subscript $\ii$ in $F_{\ii:\beta_k}$. 

\begin{Theorem}[\cite{saito}]
For $\ii \in R(w_0)$, 
$\mathrm{Span}_{\mathcal{A}} (\CB_\ii)  = L(\infty)$ and
$\CB_\ii+ q L(\infty)=B(\infty)$. 
\end{Theorem}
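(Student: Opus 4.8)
The plan is to prove both statements simultaneously by induction, peeling the first letter off the reduced word and transporting the remaining factors through the Lusztig symmetry $T_{i_1}$. To make the induction close it is cleanest to prove the analogous assertion for the PBW-type basis attached to an \emph{arbitrary} $w\in W$ and a reduced word for it, with $\CB_\ii$ being the case $w=w_0$; deleting the first letter of a reduced word for $w$ yields a reduced word for $s_{i_1}w$, which is shorter, and this is what drives the recursion.

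First I would record the recursive factorization. With $i=i_1$, so that $\beta_1=\alpha_i$ and $F_{\ii:\beta_1}=F_i$, the definition \eqref{eq:lustparam} together with the fact that $T_i$ is an algebra automorphism gives
\[
F_\ii^{\cc} = F_i^{(c_{\beta_1}^\ii)}\, T_i\!\bigl(F_{\ii'}^{\cc'}\bigr),
\]
where $\ii'=(i_2,\dots,i_N)$ is a reduced word for $s_iw_0$, the product $F_{\ii'}^{\cc'}$ is its associated PBW-type monomial, and $\cc'$ is the tail of $\cc$. Each factor $F_{\ii:\beta_k}$ with $k\ge 2$ is itself a genuine root vector in $U_q^-(\mathfrak{g})$, so $T_i\bigl(F_{\ii'}^{\cc'}\bigr)\in U_q^-(\mathfrak{g})$ automatically. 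Next I would invoke the two structural properties of $T_i$ that carry the argument. The first is Lusztig's integrality result for the braid symmetry: $T_i$ maps $L(\infty)\cap T_i^{-1}\!\bigl(U_q^-(\mathfrak{g})\bigr)$ into $L(\infty)$, so that $F_\ii^{\cc}\in L(\infty)$. The second is Saito's crystal compatibility: modulo $q$ the operator $T_i$ descends to Saito's reflection $\sigma_i$, a bijection carrying crystal elements that are ``$i$-highest'' in the relevant sense onto the set $\{b\in B(\infty):\varepsilon_i(b)=0\}$ and intertwining the Kashiwara operators accordingly. Granting these, and using that left multiplication by $F_i^{(c)}$ descends mod $q$ to $f_i^{\,c}$ on a class with $\varepsilon_i=0$, the image of $F_\ii^{\cc}$ in $L(\infty)/qL(\infty)$ is $f_i^{\,c_{\beta_1}^\ii}\,\sigma_i\bigl(\overline{F_{\ii'}^{\cc'}}\bigr)$, where by the inductive hypothesis $\overline{F_{\ii'}^{\cc'}}\in B(\infty)$; hence $\overline{F_\ii^{\cc}}\in B(\infty)$.

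To finish I would establish bijectivity and deduce spanning. Each $b\in B(\infty)$ decomposes uniquely along its $i$-string, and the bijective correspondence $b\leftrightarrow(c_{\beta_1},b_0)$ given by $c_{\beta_1}=\varepsilon_i(b)$ and $b_0=\sigma_i^{-1}\bigl(e_i^{\,c_{\beta_1}}b\bigr)$, combined with the inductive hypothesis applied to the tail, shows that $\cc\mapsto\overline{F_\ii^{\cc}}$ is a bijection onto $B(\infty)$; the same recursion recovers $\cc$ from $\overline{F_\ii^{\cc}}$, so the map is injective as well. This gives $\CB_\ii+qL(\infty)=B(\infty)$. The identity $\mathrm{Span}_{\mathcal{A}}(\CB_\ii)=L(\infty)$ then follows by a weight-space-by-weight-space Nakayama argument: $\mathrm{Span}_{\mathcal{A}}(\CB_\ii)\subseteq L(\infty)$ by the integrality above, and since its image is all of $L(\infty)/qL(\infty)$ it must be everything.

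I expect the main obstacle to be precisely the two properties of $T_i$ singled out above — its preservation of $L(\infty)$ and the identification of its $q\to0$ limit with Saito's reflection $\sigma_i$ — together with the careful bookkeeping of which of $\varepsilon_i$, $\varepsilon_i^*$ and which left/right versions of the Kashiwara operators appear at each step. Controlling the divided-power denominators produced by $T_i$ and checking its delicate commutation with $e_i,f_i,e_i^*,f_i^*$ is the technical heart; everything else is organizing the recursion. As an alternative route for the spanning and for a unitriangular comparison with the canonical basis, one could instead compute Lusztig's bilinear form on PBW monomials and show $(F_\ii^{\cc},F_\ii^{\cc'})\in\delta_{\cc\cc'}+q\mathcal{A}$, making $\CB_\ii$ almost orthonormal and hence a signed crystal basis of $L(\infty)$, though this still rests on the same integrality input for $T_i$.
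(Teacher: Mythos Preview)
The paper does not give its own proof of this statement: it is quoted as a result of Saito, with no argument supplied. So there is nothing in the paper to compare against; the theorem is simply imported from the literature.

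That said, your outline is essentially the proof in Saito's paper. The recursion
\[
F_\ii^{\cc} = F_i^{(c_{\beta_1})}\, T_i\bigl(F_{\ii'}^{\cc'}\bigr)
\]
is correct, and the two ingredients you isolate---integrality of $T_i$ on the relevant part of $L(\infty)$, and the identification of its $q\to 0$ limit with Saito's crystal reflection---are exactly the substance of Saito's argument. Your Nakayama step for the spanning is the standard way to close. The one place to be careful is the bookkeeping you already flag: Saito's bijection runs between $\{b:\varepsilon_i^*(b)=0\}$ and $\{b:\varepsilon_i(b)=0\}$, and one must check that $F_{\ii'}^{\cc'}$ (for the parabolic subalgebra attached to $s_iw_0$) lands in the correct domain so that $T_i$ applies and $\sigma_i$ makes sense on its image. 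Once that is pinned down, the rest is exactly as you describe. Your alternative route via almost-orthonormality of the PBW basis with respect to Lusztig's form is also a legitimate and well-known argument, due essentially to Lusztig.
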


\begin{Definition} \label{def:bbb}
For $b \in B(\infty)$, the \defn{Lusztig data} associated to $b$ is the tuple $\cc^\ii(b) \in \bz_{\ge0}^N$ such that $F^{\cc^\ii(b)}_\ii+ q L(\infty)=b$.
\end{Definition}

This gives a parametrization of $B(\infty)$ by $\ii$-Lusztig data for any $\ii$. 

\begin{Proposition}[\cite{BZ01,L10}]
\label{prop:crystal_op_PBW}
Fix $b \in B(\infty)$ and $i \in I$.
\begin{enumerate}
\item Let $\ii \in R(w_0)$ be such that, in the corresponding convex order, $\beta_1 = \alpha_i$.  If $\cc^\ii(b) = (c_{\beta_1},c_{\beta_2},\dots,c_{\beta_N})$, then $\cc^\ii(f_i b) = (c_{\beta_1}+1,c_{\beta_2},\dots,c_{\beta_N})$.  If $c_{\beta_1} = 0$ then $e_i b =\ghost$ and otherwise $\cc^\ii(e_i b) = (c_{\beta_1}-1,c_{\beta_2},\dots,c_{\beta_N})$.
\item Let $\ii \in R(w_0)$ be such that, in the corresponding convex order, $\beta_N = \alpha_i$.  If $\cc^\ii(b) = (c_1,c_2,\dots,c_N)$ then $\cc^\ii(f_i^* b) = (c_{\beta_1},c_{\beta_2},\dots,c_{\beta_N}+1)$.  If $c_{\beta_N} = 0$ then $e_i^* b =\ghost$ and otherwise $\cc^\ii(e_i^* b) = (c_{\beta_1},c_{\beta_2},\dots,c_{\beta_N}-1)$.
\end{enumerate}
\end{Proposition}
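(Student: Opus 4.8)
The plan is to establish the exact analogues of (i)--(ii) inside $U_q^-(\mathfrak g)$ using Kashiwara's description of the operators $e_i,f_i$, and then reduce modulo $qL(\infty)$. Recall from \cite{K91} that every $u\in U_q^-(\mathfrak g)$ has a unique expansion $u=\sum_{n\ge0}F_i^{(n)}u_n$ with $e_i'u_n=0$, where $e_i'$ is the twisted derivation determined by $e_i'(F_j)=\delta_{ij}$ and $e_i'(xy)=e_i'(x)\,y+q^{(\alpha_i|\wt x)}\,x\,e_i'(y)$ for homogeneous $x$; in these terms $f_iu=\sum_nF_i^{(n+1)}u_n$ and $e_iu=\sum_nF_i^{(n-1)}u_n$ (with $F_i^{(-1)}=0$). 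Thus for (i) it suffices to show that $F_\ii^{\cc}$ is already in this normal form, i.e.\ that it factors as $F_i^{(c_{\beta_1})}P$ with $e_i'P=0$.

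The factorization is immediate: since $\beta_1=\alpha_i$ forces $i_1=i$, equation \eqref{eq:lustparam} gives $F_{\ii:\beta_1}^{(c_{\beta_1})}=F_i^{(c_{\beta_1})}$, while
\[
P:=F_{\ii:\beta_2}^{(c_{\beta_2})}\cdots F_{\ii:\beta_N}^{(c_{\beta_N})}=T_i(Q),
\]
where $Q$ is the PBW monomial for the reduced word $\ii'=(i_2,\dots,i_N)$ of $s_iw_0$. For the kernel claim I would first note that $\ker e_i'$ is a subalgebra of $U_q^-(\mathfrak g)$: if $e_i'x=e_i'y=0$, the twisted Leibniz rule gives $e_i'(xy)=0$. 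It therefore suffices that each factor $F_{\ii:\beta_k}$ with $k\ge2$ lies in $\ker e_i'$. Each such vector has weight $-\beta_k$ with $\beta_k\in\Phi^+\setminus\{\alpha_i\}$ and is of the form $T_i(G_k)$; the input I would invoke is Lusztig's analysis of the braid symmetry $T_i=T''_{i,-1}$ \cite[\S37--38]{L10} (also central to \cite{saito}), which yields $e_i'(F_{\ii:\beta_k})=0$ for every root vector of weight $-\beta$ with $\beta\ne\alpha_i$ in a convex order whose minimum is $\alpha_i$. Hence $P\in\ker e_i'$.

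Granting this, Kashiwara's formulas give $f_i(F_\ii^{\cc})=F_i^{(c_{\beta_1}+1)}P$ and $e_i(F_\ii^{\cc})=F_i^{(c_{\beta_1}-1)}P$, the latter being $0$ exactly when $c_{\beta_1}=0$; and $F_i^{(c_{\beta_1}\pm1)}P$ is precisely the PBW monomial obtained from $F_\ii^{\cc}$ by replacing $c_{\beta_1}$ with $c_{\beta_1}\pm1$. Reducing modulo $qL(\infty)$ and recalling (Definition \ref{def:bbb}) that $F_\ii^{\cc}$ represents the element of $B(\infty)$ with $\ii$-Lusztig data $\cc$, these equalities become the asserted transformations of $\cc^\ii(b)$, and $e_ib=\ghost$ precisely when $c_{\beta_1}=0$. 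This proves (i).

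For (ii) I would pass through the star involution rather than redo the argument on the right. Let $\ii^{\mathrm{op}}$ be the reduced word whose convex order is the reverse of that of $\ii$ (convex because the reverse of a convex order is convex); since $\beta_N=\alpha_i$ is the maximum for $\ii$, it is the minimum for $\ii^{\mathrm{op}}$, so part (i) applies to $\ii^{\mathrm{op}}$ and the operator $f_i$. Using $F_i^*=F_i$ together with the known compatibility of $*$ with order reversal of PBW bases---that $*$ carries the element with $\ii$-data $\cc$ to the element with $\ii^{\mathrm{op}}$-data obtained by reversing $\cc$ \cite{L10,saito}---the identity $f_i^*=*\circ f_i\circ*$ transports the formulas of (i) for $\ii^{\mathrm{op}}$ back to the claimed formulas for $f_i^*,e_i^*$ on $\ii$-data. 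The one genuinely substantive point throughout is the kernel claim $e_i'P=0$, i.e.\ the interaction between the braid operators $T_i$ and the derivation $e_i'$; once that is in hand, everything else is the bookkeeping of Kashiwara's normal form and, for (ii), of the $*$-symmetry.
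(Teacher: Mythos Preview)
The paper does not prove this proposition; it is quoted as a known result from \cite{BZ01,L10}. So there is no ``paper's own proof'' to compare against, and what you have written is a reasonable reconstruction of the standard argument.

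Your outline for (i) is the correct one: factor $F_\ii^{\cc}=F_i^{(c_{\beta_1})}P$ and show $P\in\ker e_i'$, then read off Kashiwara's formulas. The one place to be careful is the sentence ``which yields $e_i'(F_{\ii:\beta_k})=0$ for every root vector of weight $-\beta$ with $\beta\neq\alpha_i$.'' The vanishing is \emph{not} a consequence of the weight alone (e.g.\ $F_jF_i$ has weight $-\alpha_i-\alpha_j$ but is not in $\ker e_i'$); what you actually need, and what you almost say, is the identification of $T_i(U_q^-)\cap U_q^-$ with the kernel of the appropriate twisted derivation. This is \cite[Prop.~38.1.6]{L10}, but Lusztig's derivations $r_i$, ${}_ir$ and Kashiwara's $e_i'$, $e_i''$ are matched only up to a choice of convention and a sign in the braid symmetry ($T''_{i,-1}$ versus $T''_{i,1}$), so pin down precisely which kernel corresponds to the paper's $T_i:=T''_{i,-1}$ before invoking it.

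For (ii) the $*$-reduction is fine in spirit, but the word $\ii^{\mathrm{op}}$ whose convex order is the literal reverse of that of $\ii$ is not the naively reversed word $(i_N,\dots,i_1)$: a short computation shows $(i_N,\dots,i_1)$ yields the order $-w_0\beta_N\prec\cdots\prec -w_0\beta_1$, so $\ii^{\mathrm{op}}=(\sigma(i_N),\dots,\sigma(i_1))$ with $\sigma=-w_0$. Correspondingly, the compatibility ``$*$ sends $\ii$-data to reversed $\ii^{\mathrm{op}}$-data'' uses $*\,T''_{i,-1}=T'_{i,1}\,*$ together with the relation between $T'$ and $T''$ under $\sigma$; this is again in \cite[\S37]{L10} but deserves an explicit line rather than a citation in passing. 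Alternatively, and more simply, you can avoid $*$ entirely and rerun the argument of (i) on the right using the other derivation and the decomposition $U_q^-=\bigoplus_n (\ker {}_ie')\,F_i^{(n)}$.
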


In order to use Proposition \ref{prop:crystal_op_PBW} to understand the whole crystal structure on $B(\infty)$, we need to understand how the Lusztig data $\cc^\ii(b)$ are related for different $\ii$. Since all reduced expressions are related by a sequence of braid moves, it is enough to understand what happens to $\cc^\ii(b)$ when $\ii$ changes by a single braid move.

\begin{Lemma}
\label{Lem:braid_on_roots}
Fix $\ii \in R(w_0)$ and let $\{\beta_1 \prec \cdots \prec \beta_N\}$ be the corresponding convex ordering of $\Phi^+$.
\begin{enumerate}
\item There is a reduced expressions $\ii'$ related to $\ii$ by a $2$-term braid move $(i_k,i_{k+1}) \to (i_{k+1},i_k)$ if and only if $(\beta_k|\beta_{k+1})=0$.    In this case, $\beta_k \prec \beta_{k+1}$ is replaced by $\beta_{k+1} \prec' \beta_k$ after the braid move.
\item \label{ht2} There is a reduced expressions $\ii'$ related to $\ii$ by a braid move $(i_k,i_{k+1},i_{k+2}) \to (i_{k+1},i_k,i_{k+1})$, with $i_k = i_{k+2}$, if and only if $\{\beta_k,\beta_{k+1},\beta_{k+2}\}$ form a root system of type $\mathfrak{sl}_3$.  In this case, $\beta_k \prec \beta_{k+1} \prec \beta_{k+2}$ is replaced by $\beta_{k+2} \prec' \beta_{k+1} \prec' \beta_k$ after the braid move.
\item \label{ht3} There is a reduced expressions $\ii'$ related to $\ii$ by a braid move $(i_k,i_{k+1},i_{k+2},i_{k+3}) \to (i_{k+1},i_k,i_{k+1},i_k)$, with $i_k = i_{k+2}$ and $i_{k+1} = i_{k+3}$, if and only if $\{\beta_k,\beta_{k+1},\beta_{k+2},\beta_{k+3}\}$ form a root system of type $B_2$.  In this case, $\beta_k \prec \beta_{k+1} \prec \beta_{k+2} \prec \beta_{k+3}$ is replaced by $\beta_{k+3} \prec' \beta_{k+2} \prec' \beta_{k+1} \prec' \beta_k$ after the braid move.
\end{enumerate} 
\end{Lemma}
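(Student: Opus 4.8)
The plan is to analyze what braid moves are possible on a reduced word from the point of view of the corresponding convex order, using the bijection of Theorem \ref{thm:papi}. The key observation is that a braid move applied at positions $k, k+1, \dots$ only affects the roots $\beta_k, \beta_{k+1}, \dots$ in those positions; the roots occurring strictly before position $k$ and strictly after the braid block are unchanged. This is because $\beta_j = s_{i_1}\cdots s_{i_{j-1}}\alpha_{i_j}$ depends only on the letters up to position $j$, and the braid relation is itself an equality of Weyl group elements, so the product $s_{i_1}\cdots s_{i_{k+\ell-1}}$ is unchanged after the braid block of length $\ell$. Thus it suffices to understand each braid move locally, as a statement about a rank-two sub-root-system generated by $\alpha_{i_k}$ and $\alpha_{i_{k+1}}$.

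For each of the three cases I would first establish the ``only if'' direction by direct computation. If a $\ell$-term braid move is legal at position $k$, then $i_k$ and $i_{k+1}$ are joined in the Dynkin diagram by an edge whose type ($0$, single, or double bond) is exactly recorded by the length of the braid relation ($2$, $3$, or $4$). Setting $u = s_{i_1}\cdots s_{i_{k-1}}$, the roots in the braid block are $u(\gamma)$ where $\gamma$ ranges over the positive roots of the rank-two system $\langle \alpha_{i_k}, \alpha_{i_{k+1}}\rangle$ listed in the convex order induced by the word $(i_k, i_{k+1}, \dots)$. Since $u$ preserves the bilinear form $(-|-)$, the images $u(\gamma)$ span a root subsystem isometric to the rank-two system in question: type $A_1 \times A_1$ (so $(\beta_k|\beta_{k+1})=0$) in case (i), type $A_2 \cong \mathfrak{sl}_3$ in case (ii), and type $B_2$ in case (iii). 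The reversal of the order is then immediate: after the braid move the initial segment $u$ is replaced by $u s_{i_k} s_{i_{k+1}} \cdots$ read in the new word, which reverses the listing of the rank-two roots, matching Lemma \ref{lem:combine-orders}'s compatibility with convexity.

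For the ``if'' direction, I would argue that if $\{\beta_k, \dots, \beta_{k+\ell-1}\}$ form a rank-two subsystem of the given type and these are \emph{consecutive} in the convex order, then the word admits the corresponding braid move at position $k$. The consecutiveness is the crucial input: convexity forces that a maximal interval of the order whose roots lie in a single rank-two subsystem consists precisely of all the positive roots of that subsystem, appearing in one of the two convex orders of that rank-two system. Pulling back through $u^{-1}$, the letters $i_k, i_{k+1}, \dots$ must then spell out a reduced word for the longest element of the rank-two Weyl group, which is exactly one side of a braid relation; hence the braid move is available. Here I would lean on Theorem \ref{thm:papi} to translate between the word and the order, and on the classification of rank-two root systems to enumerate the possible convex orders.

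The main obstacle I expect is establishing this consecutiveness-forces-a-full-rank-two-block statement cleanly, i.e.\ proving that if the roots sitting in positions $k$ through $k+\ell-1$ all belong to one rank-two subsystem $\Psi$ then they are exactly $\Phi^+ \cap \Psi$ in a convex order of $\Psi$. The delicate point is ruling out ``intruder'' roots: one must show no root outside $\Psi$ can be squeezed strictly between two roots of $\Psi$ that are adjacent in the ambient convex order, and dually that every positive root of $\Psi$ actually appears in the block. Both follow from the convexity axiom applied to sums $\beta' = \beta + \beta''$ within $\Psi$, but assembling this into the precise count ($2$, $3$, or $4$ roots, with the stated bilinear-form / root-system condition) requires care, and is where the three cases genuinely differ.
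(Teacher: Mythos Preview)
Your plan is sound and can be completed, but it is more elaborate than what the paper does. The paper bypasses the inversion-set argument entirely with the single computation
\[
(\beta_k | \beta_{k+1}) = (w\alpha_{i_k} | w s_{i_k}\alpha_{i_{k+1}}) = (\alpha_{i_k} | s_{i_k}\alpha_{i_{k+1}}) = (s_{i_k}\alpha_{i_k} | \alpha_{i_{k+1}}) = -(\alpha_{i_k} | \alpha_{i_{k+1}}),
\]
where $w = s_{i_1}\cdots s_{i_{k-1}}$. This immediately converts the hypothesis on the root-system type of $\{\beta_k,\beta_{k+1},\dots\}$ into the value of the Cartan entry $a_{i_k,i_{k+1}}$, and that entry is exactly what governs the length of the braid relation between $s_{i_k}$ and $s_{i_{k+1}}$. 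The reversal of the order is then checked by writing each $\beta_j$ explicitly under both words. Your route gives a more uniform picture across the three cases, at the cost of having to argue that the pulled-back system $u^{-1}\Psi$ is the standard parabolic subsystem on $\{\alpha_{i_k},\alpha_{i_{k+1}}\}$ and that the subword therefore spells one side of a braid relation; this is true (the system contains $\alpha_{i_k}$ and is closed under $s_{i_k}$, hence contains $\alpha_{i_{k+1}}$), but it is extra work compared to the paper's two lines.

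One correction: the ``main obstacle'' you flag is not actually present. The hypothesis already specifies that the $\ell$ roots in positions $k,\dots,k+\ell-1$ \emph{form} a root system of the given type, so by assumption they are all of $\Phi^+\cap\Psi$; there are no intruders to exclude and no counting argument is needed. The only genuine content of the ``if'' direction is that this forces the subword $(i_k,\dots,i_{k+\ell-1})$ to be alternating, which follows either from your inversion-set argument or, more quickly, from the paper's computation above.
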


\begin{proof}
Let $w = s_{i_1}s_{i_2}\cdots s_{i_{k-1}}$ and suppose $(\beta_k|\beta_{k+1}) = 0$.  Then
\[
0 = (\beta_k|\beta_{k+1}) = (w\alpha_{i_k} | ws_{i_k}\alpha_{i_{k+1}}) = (\alpha_{i_k} | s_{i_k}\alpha_{i_{k+1}}) = (s_{i_k}\alpha_{i_k} | \alpha_{i_{k+1}}) = - (\alpha_{i_k} | \alpha_{i_{k+1}}).
\]
Hence $a_{i_k,i_{k+1}} =0$, and we can perform a $2$-term braid move exactly when $a_{i_k,i_{k+1}} =0$. Now $\beta_{k+1} = w\alpha_{i_{k+1}}$ and $\beta_k = ws_{i_{k+1}}\alpha_{i_k}$, so reversing $i_k$ and $i_{k+1}$ clearly reverses these. 

The proof of \eqref{ht2} is similar: both the condition that $\{\beta_k,\beta_{k+1},\beta_{k+2}\}$ forms a root system of type $\mathfrak{sl}_3$ and the ability to perform a $3$-term braid move of the desired form are equivalent to $a_{i_k,i_{k+1}} = -1$.  In this situation, with $w$ as above,
\begin{align*}
\beta_{k+2} &= ws_{i_k}s_{i_{k+1}}\alpha_{i_k} = ws_{i_k}(\alpha_{i_k}+\alpha_{i_{k+1}}) = w\alpha_{i_{k+1}}, \\
\beta_{k+1} &= ws_{i_k}\alpha_{i_{k+1}} = w(\alpha_{i_k} + \alpha_{i_{k+1}}) = ws_{i_{k+1}}\alpha_{i_k}, \\
\beta_k &= w\alpha_{i_k} = ws_{i_{k+1}}(\alpha_{i_k} + \alpha_{i_{k+1}}) = ws_{i_{k+1}}s_{i_k}\alpha_{i_{k+1}}.
\end{align*}
By Theorem \ref{thm:papi}, we have $\beta_{k+2} \prec' \beta_{k+1} \prec' \beta_k$.

The proof of \eqref{ht3} is also similar.
\end{proof}

\begin{Lemma}[{\cite[\S 2.1]{L90}}] 
\label{Lem:moves} 
Let $b\in B(\infty)$.
\begin{enumerate}
\item If $\mathfrak{g}=\mathfrak{sl}_2 \times \mathfrak{sl}_2$, then the Lusztig data $\cc(b)$ and $\cc'(b)$ with respect to the two possible reduced expressions are identical (meaning $c_{\alpha_1}=c'_{\alpha_1}$ and $c_{\alpha_2}=c'_{\alpha_2}$, but the order is reversed). 

\item If $\mathfrak{g}=\mathfrak{sl}_3$, then the Lusztig data $\cc(b)$ and $\cc'(b)$ with respect to the two possible reduced expressions  are related by
\begin{align*}
c'_{\alpha_1} &= \max \{ c_{\alpha_1+\alpha_2}, c_{\alpha_1}+c_{\alpha_1+\alpha_2}-c_{\alpha_2} \},\\
c'_{\alpha_1+\alpha_2} &= \min \{ c_{\alpha_1}, c_{\alpha_2} \},\\
c'_{\alpha_2} &= \max \{c_{\alpha_1+\alpha_2}, c_{\alpha_2}+c_{\alpha_1+\alpha_2}-c_{\alpha_1} \}.
\end{align*}
\end{enumerate}
More generally, if a reduced expression is changed by a $2$- or $3$-term braid move, the Lusztig data changes according to these rules for the affected roots. 
\end{Lemma}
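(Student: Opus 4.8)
The plan is to prove the two rank-two cases directly from the defining data of the PBW bases, and then reduce the general braid-move statement to these cases by the locality already built into Lusztig's construction. For the $\mathfrak{sl}_2\times\mathfrak{sl}_2$ case (the $2$-term braid move), observe that when $a_{ij}=0$ the root vectors $F_{\alpha_1}$ and $F_{\alpha_2}$ commute, so the PBW monomial $F_{\alpha_1}^{(c_{\alpha_1})}F_{\alpha_2}^{(c_{\alpha_2})}$ equals $F_{\alpha_2}^{(c_{\alpha_2})}F_{\alpha_1}^{(c_{\alpha_1})}$; the two reduced expressions simply reverse the order of the two factors, and by Lemma~\ref{Lem:braid_on_roots}(i) the roots themselves are merely swapped. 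Hence the multiplicities are unchanged, as claimed. This step is essentially immediate once commutativity is noted.

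The heart of the lemma is the $\mathfrak{sl}_3$ case. First I would fix the two reduced words $\ii=(1,2,1)$ and $\ii'=(2,1,2)$, with convex orders $\alpha_1\prec\alpha_1+\alpha_2\prec\alpha_2$ and $\alpha_2\prec'\alpha_1+\alpha_2\prec'\alpha_1$ (as dictated by Lemma~\ref{Lem:braid_on_roots}(ii)). The task is then to express a single element $b\in B(\infty)$, whose $\ii$-Lusztig datum is $(c_{\alpha_1},c_{\alpha_1+\alpha_2},c_{\alpha_2})$, in terms of the $\ii'$-PBW basis. I would proceed by making the identification with Young tableaux / multisegments for $\mathfrak{sl}_3$ explicit, or equivalently by a direct computation in $U_q^-(\mathfrak{sl}_3)$ using the Serre relation together with the action of the Lusztig automorphism $T_i$ and the quantum-Verma-type straightening of a PBW monomial into the opposite PBW order. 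Concretely, one rewrites the product $F_1^{(c_{\alpha_1})}F_{12}^{(c_{\alpha_1+\alpha_2})}F_2^{(c_{\alpha_2})}$ in the basis $\{F_2^{(a)}F_{12}^{(b)}F_1^{(c)}\}$, reads off the leading term modulo $qL(\infty)$, and matches the exponents; the resulting piecewise-linear formulas are exactly the tropicalized $A_2$ transition maps stated in the lemma. As a cross-check I would verify that this map is an involution (applying it twice returns the original datum), that it preserves the weight $c_{\alpha_1}\alpha_1+c_{\alpha_1+\alpha_2}(\alpha_1+\alpha_2)+c_{\alpha_2}\alpha_2$, and that it agrees with the known formulas of Lusztig and Berenstein--Zelevinsky for the $A_2$ change of PBW parametrization.

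For the final ``more generally'' clause, the point is \emph{locality}: a braid move alters only the contiguous block of factors indexed by positions $k,\dots,k+\ell-1$ in the reduced word, and by Lemma~\ref{Lem:braid_on_roots} it alters only the corresponding contiguous block $\beta_k,\dots,\beta_{k+\ell-1}$ of roots, leaving all other $\beta_j$ and their multiplicities fixed. I would make this precise by noting that the initial segment $w=s_{i_1}\cdots s_{i_{k-1}}$ is unchanged by the move, so the root vectors $F_{\ii:\beta_j}$ for $j<k$ are literally the same; and the Lusztig automorphisms $T_{i_1}\cdots T_{i_{k-1}}$ applied to the affected sub-product reduce the computation to the rank-two calculation inside the parabolic subalgebra generated by $F_{i_k},F_{i_{k+1}}$. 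Thus the change in Lusztig data is governed entirely by the rank-two formula for the roots involved, which is what the lemma asserts.

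The main obstacle I anticipate is the explicit $\mathfrak{sl}_3$ straightening computation: carrying the PBW monomial through the braid move requires care with the quantum integers $[c]!$ and the $q$-commutation relations among $F_1,F_{12},F_2$, and one must argue cleanly that passing to $L(\infty)/qL(\infty)$ collapses the $q$-dependent coefficients into the stated max/min (tropical) expressions. Keeping track of exactly which monomial survives modulo $qL(\infty)$ — rather than computing the full transition matrix — is the delicate part, and I would lean on Lusztig's description of the $*$-involution and the symmetry between $\ii$ and $\ii'$ to minimize the raw algebra. The remaining cases ($B_2$ in part~(iii) of the preceding lemma, referenced by the ``$2$- or $3$-term'' phrasing) are handled analogously but are not needed for the stated $\mathfrak{sl}_2$ and $\mathfrak{sl}_3$ formulas.
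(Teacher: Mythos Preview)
The paper does not prove this lemma; it is stated as a citation to Lusztig \cite[\S2.1]{L90} and used without further justification. So there is no ``paper's own proof'' to compare against---the authors simply import the result. Your proposal is therefore a reconstruction of a proof rather than a comparison target.

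As a reconstruction, your outline is sound in structure but has one genuine gap in the locality argument. You correctly note that the root vectors $F_{\ii:\beta_j}$ for $j<k$ are unchanged by a braid move at positions $k,\dots,k+\ell-1$. However, you do not address the root vectors for $j>k+\ell-1$. These are defined as $T_{i_1}\cdots T_{i_{j-1}}(F_{i_j})$, and the prefix now passes through the altered block. The reason they are nevertheless unchanged is that Lusztig's automorphisms $T_i$ themselves satisfy the braid relations (this is a nontrivial theorem of Lusztig, see \cite[\S39]{L10}), so $T_{i_k}\cdots T_{i_{k+\ell-1}} = T_{i'_k}\cdots T_{i'_{k+\ell-1}}$ as automorphisms of $U_q(\mathfrak{g})$. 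Without invoking this, your locality reduction is incomplete: one could imagine the later root vectors changing even though the later indices $i_j$ do not.

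Separately, your $\mathfrak{sl}_3$ paragraph is an honest description of the strategy but not a proof: you never actually perform the straightening or identify the surviving monomial modulo $qL(\infty)$. Lusztig's original argument in \cite{L90} does carry this out explicitly (and the piecewise-linear formulas are read off from the resulting expansion), so if you intend to give a self-contained proof you would need to supply that computation, or else cite it as the paper does.
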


\begin{Lemma}[{\cite[Thm.~3.1]{BZ97}}] \label{lem:LBZ}
\label{Lem:movesBC} 
If $\mathfrak{g}= B_2$, with $\alpha_1$ being the long simple root, and $\cc(b)$ is the Lusztig data of $b\in B(\infty)$ with respect to one reduced expression, then the Lusztig data $\cc'(b)$ with respect to the other reduced expression is 
\begin{align*}
c'_{\alpha_1} &= c_{\alpha_1} + c_{\alpha_1+\alpha_2} + c_{\alpha_1+2\alpha_2} -\pi_1,\\
c'_{\alpha_1+\alpha_2} &= 2\pi_1 - \pi_2,\\
c'_{\alpha_1+2\alpha_2} &= \pi_2 - \pi_1,\\
c'_{\alpha_2} &= c_{\alpha_1+\alpha_2} + 2 c_{\alpha_1+2\alpha_2} + c_{\alpha_2} - \pi_2,
\end{align*}
where
\begin{align*}
\pi_1 &= \min\{ c_{\alpha_1} + c_{\alpha_1+\alpha_2}, c_{\alpha_1} + c_{\alpha_2}, c_{\alpha_1+2\alpha_2} + c_{\alpha_2} \}, \\
\pi_2 &= \min\{ 2 c_{\alpha_1} + c_{\alpha_1+\alpha_2}, 2 c_{\alpha_1} + c_{\alpha_2}, 2 c_{\alpha_1+2\alpha_2} + c_{\alpha_2} \}.
\end{align*}
More generally, if a reduced expression is changed by a $2$- or $3$-term braid move, the Lusztig data changes according to these rules for the affected roots. 
\end{Lemma}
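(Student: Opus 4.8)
The plan is to reduce everything to an explicit rank-two computation, since the statement concerns only $B_2$. First I would record the two convex orders via Theorem~\ref{thm:papi}: for $\ii = (1,2,1,2)$ one gets $\alpha_1 \prec \alpha_1+\alpha_2 \prec \alpha_1+2\alpha_2 \prec \alpha_2$, while for the reverse word $\ii' = (2,1,2,1)$ one gets $\alpha_2 \prec' \alpha_1+2\alpha_2 \prec' \alpha_1+\alpha_2 \prec' \alpha_1$, consistent with Lemma~\ref{Lem:braid_on_roots}\eqref{ht3}. I would then compute the four root vectors $F_{\alpha_1}, F_{\alpha_1+\alpha_2}, F_{\alpha_1+2\alpha_2}, F_{\alpha_2}$ from \eqref{eq:lustparam} by applying the Lusztig automorphisms $T_1, T_2$ to the $F_i$, and establish the Levendorskii--Soibelman straightening relations expressing any product of root vectors taken out of $\ii$-convex order as an ordered product of the four root vectors, with coefficients in $\bq(q)$ built from $q$-integers and divided powers.

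With those relations in hand, the core step is to rewrite an arbitrary $\ii'$-ordered monomial $F_{\ii'}^{\cc'}$ as a $\bq(q)$-linear combination of $\ii$-ordered monomials $F_\ii^{\cc}$. Since $\CB_\ii$ and $\CB_{\ii'}$ are both bases with $\CB_\ii + qL(\infty) = \CB_{\ii'} + qL(\infty) = B(\infty)$, for each $b$ exactly one monomial in the expansion survives modulo $qL(\infty)$; matching the unique term nonvanishing at $q=0$ produces the crystal-level bijection $\cc \leftrightarrow \cc'$. I expect the extremal (lowest-degree) exponents generated by iterating the straightening relations to be governed precisely by the two minima $\pi_1,\pi_2$, with the four stated formulas then following from bookkeeping the total multiplicity of each root vector. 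The main obstacle is the non-simply-laced asymmetry: unlike the $\mathfrak{sl}_3$ case of Lemma~\ref{Lem:moves}, the $B_2$ relations are genuinely more intricate (there are two intermediate root vectors $\alpha_1+\alpha_2$ and $\alpha_1+2\alpha_2$, and the commutators carry nontrivial $q$-binomial factors), so tracking which monomial survives at $q=0$ requires care. Concretely this is a case analysis over the linearity chambers of the map --- the regions on which each of the three arguments of $\pi_1$, and of $\pi_2$, is the minimizer --- checking on each chamber that the reordering sends one monomial to one monomial with the claimed exponents.

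Finally, I would verify the result independently using the crystal structure. By Proposition~\ref{prop:crystal_op_PBW}, in the $\ii$-parametrization $f_1$ increments $c_{\alpha_1}$ and $f_2^*$ increments $c_{\alpha_2}$, while in the $\ii'$-parametrization $f_2$ increments $c'_{\alpha_2}$ and $f_1^*$ increments $c'_{\alpha_1}$; the transition map must intertwine these, and, since $\ii'$ is the reverse of $\ii$, it must be compatible with the involution $*$ that exchanges the $f_i$ with the $f_i^*$. As a sanity check the map must send $\bz_{\ge0}^4$ into itself, and this is forced by $c_\beta \ge 0$: one has $\pi_1 \le c_{\alpha_1}+c_{\alpha_1+\alpha_2}$ and $\pi_2 \le 2c_{\alpha_1+2\alpha_2}+c_{\alpha_2}$, giving $c'_{\alpha_1}, c'_{\alpha_2} \ge 0$, while a short argument shows $\pi_1 \le \pi_2 \le 2\pi_1$, giving $c'_{\alpha_1+2\alpha_2}, c'_{\alpha_1+\alpha_2} \ge 0$. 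Together with the observation that applying the formula twice returns to the identity (reflecting $*$ being an involution), these checks confirm the map is the required change of parametrization.
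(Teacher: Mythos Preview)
The paper does not supply its own proof of this lemma: it is stated with the citation \cite[Thm.~3.1]{BZ97} and used as a black box. So there is no ``paper's proof'' to compare against; the authors simply import the Berenstein--Zelevinsky result.

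Your proposal is an attempt to reprove the result from first principles, and the strategy is sound in outline: compute the root vectors $F_\beta$ via the $T_i$, establish the quadratic straightening relations among them, expand an $\ii'$-ordered monomial in the $\ii$-ordered basis, and read off the unique term surviving modulo $qL(\infty)$. This is essentially the original Lusztig approach that \cite{BZ97} systematized via tropicalization of a birational change of coordinates. Two cautions. First, the step ``matching the unique term nonvanishing at $q=0$'' is where all the work lives: in $B_2$ the Levendorskii--Soibelman relations produce genuinely multi-term expansions, and showing that a single monomial dominates (and identifying which one) on each chamber is exactly the tropicalization argument of \cite{BZ97}; your case analysis over the linearity regions of $(\pi_1,\pi_2)$ is the right shape, but carrying it out honestly is not short. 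Second, your closing ``sanity check'' that the map is an involution needs care: the transition $R_\ii^{\ii'}$ is a bijection whose inverse is $R_{\ii'}^\ii$, and these are given by the \emph{same} piecewise-linear formula only after relabeling the coordinates according to the reversed convex order; it is not literally true that iterating the displayed formulas on the tuple $(c_{\alpha_1},c_{\alpha_1+\alpha_2},c_{\alpha_1+2\alpha_2},c_{\alpha_2})$ returns the identity without that relabeling. None of this is a fatal gap, but in context the cleaner route is simply to invoke \cite{BZ97} as the paper does, and then use the reformulation in Lemma~\ref{lem:newlfs} for the combinatorics that follows.
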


The following is equivalent to Lemma \ref{lem:LBZ}, but is more suited to our purposes. 

\begin{Lemma} \label{lem:newlfs}
Let $\mathfrak{g}=B_2$, with $\alpha_1$ the long root and $\alpha_2$ the short root. If $\cc(b) = ( c_{\alpha_1}, c_{\alpha_1+\alpha_2}, c_{\alpha_1+2\alpha_2}, c_{\alpha_2} )$ is the Lusztig data for $b \in B(\infty)$ with respect to one reduced expression, then the Lusztig data $\cc'(b)$ with respect to the other reduced expression can be found as follows: Consider the string of large and small brackets
\[
T(\cc)= \underbrace{)\cdots)}_{c_{\alpha_1+\alpha_2}}\ \
\underbrace{\Big (\cdots\Big (}_{c_{\alpha_1}}\ \ 
\underbrace{\Big )\cdots \Big )}_{c_{\alpha_1+2 \alpha_{2}}}\ \
\underbrace{(\cdots(}_{c_{\alpha_1+\alpha_2}}\ \
\underbrace{)\cdots)}_{c_{\alpha_2}}\ \
\underbrace{\Big (\cdots \Big (}_{c_{\alpha_1+2\alpha_2}}.\ \
\]
Cancel brackets using the rules:
\begin{itemize}
\item cancel as many pairs $()$ and $\Big ( \Big )$ as possible, and

\item cancel remaining ``$\Big ($" and ``$)$" using the rule that one ``$\Big ($" can cancel either one or two ``$)$", and as many total brackets as possible are canceled.
\end{itemize}
Then 
\begin{equation} \label{leq-bra}
\begin{aligned}
c'_{\alpha_1} &=  \#\,\text{uncanceled } (\  +\  \#\,\text{uncanceled } \Big (  \\
c'_{\alpha_1+\alpha_2} &= \#\,\text{canceled }(  ) \ +\  \# \,\text{canceled } \Big ( ) \\
c'_{\alpha_1+2\alpha_2} &= \#\,\text{canceled }\Big (\Big)\ +\  \#\,\text{canceled } \Big ( )) \\
 c'_{\alpha_2} &= \#\,\text{uncanceled } )\ +\ 2 \#\,\text{uncanceled } \Big ).
\end{aligned}
\end{equation}
Equivalently, 
\begin{equation} \label{leq-alg}
\begin{aligned}
c'_{\alpha_1}  &= \max \{ c_{\alpha_1+2\alpha_2}, c_{\alpha_1+\alpha_2} + c_{\alpha_1+2\alpha_2}- c_{\alpha_2}, 
 c_{\alpha_1}+ c_{\alpha_1+\alpha_2} - c_{\alpha_2} 
 \}  \\
c'_{\alpha_1+\alpha_2} &= \min \{\max \{ c_{\alpha_1+\alpha_2}, c_{\alpha_1}+ c_{\alpha_1+\alpha_2} - c_{\alpha_1+2\alpha_2} \},  c_{\alpha_2} \} \\
 c'_{\alpha_1+2\alpha_2} &= \min \{\max \left\{ c_{\alpha_1+2\alpha_2},  c_{\alpha_2}+ 2 c_{\alpha_1+2\alpha_2} - c_{\alpha_1+\alpha_2} - c_{\alpha_1} \right\},  c_{\alpha_1} \} \\
  c'_{\alpha_2} &= \max \{c_{\alpha_1+\alpha_2}, 2 c_{\alpha_1+2\alpha_2}+ c_{\alpha_1+\alpha_2}-2 c_{\alpha_1}, c_{\alpha_2}+ 2 c_{\alpha_1+2\alpha_2}  
-2 c_{\alpha_1}  \} .\\
\end{aligned}
\end{equation}
More generally, if a reduced expression is changed by a $4$-term braid move, then the Lusztig data changes according to these rules for the affected roots. 
\end{Lemma}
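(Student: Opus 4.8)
The plan is to establish the lemma in three stages: verify that the bracketing recipe of \eqref{leq-bra} is well defined, extract from it closed-form expressions for the four output coordinates (these are \eqref{leq-alg}), and then check that the resulting piecewise-linear map agrees with the Berenstein--Zelevinsky formulas of Lemma~\ref{lem:LBZ}. To lighten notation I write $a=c_{\alpha_1}$, $b=c_{\alpha_1+\alpha_2}$, $c=c_{\alpha_1+2\alpha_2}$, $d=c_{\alpha_2}$ and set $x_+=\max\{x,0\}$, and I read $T(\cc)$ from left to right, always cancelling a closing bracket against the nearest still-available opening bracket to its left (matching small with small and big with big in the first step, big-open with small-close in the second). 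A useful reduction is that both the bracketing map and the map of Lemma~\ref{lem:LBZ} preserve $\wt$, i.e.\ they fix $a+b+c$ and $b+2c+d$; hence once the output is known to preserve weight it is enough to match the two ``middle'' coordinates $c'_{\alpha_1+\alpha_2}$ and $c'_{\alpha_1+2\alpha_2}$, after which $c'_{\alpha_1}$ and $c'_{\alpha_2}$ are forced.

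For the first stage I analyze the cancellation. The first step is ordinary same-size matching: the block of $b$ small opens can only reach the block of $d$ small closes to its right, so exactly $\min\{b,d\}$ pairs ``$()$'' cancel, and dually exactly $\min\{a,c\}$ pairs ``$\Big(\Big)$'' cancel. The key structural observation is that the $b$ small closes at the far left have no big open to their left and the $c$ big opens at the far right have no small close to their right; consequently the only brackets that can meet in the second step are the $P:=(a-c)_+$ big opens surviving from the $a$-block and the $Q:=(d-b)_+$ small closes surviving from the $d$-block, with every one of the former lying to the left of every one of the latter. The requirement that each big open absorb one or two small closes while the total number of cancelled brackets is maximal then pins the outcome down uniquely: a short optimization in the number of doubles shows one cancels $Q$ singles if $Q\le P$, cancels $2P-Q$ singles and $Q-P$ doubles if $P\le Q\le 2P$, and cancels $P$ doubles if $Q\ge 2P$. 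Checking that maximality forces this exact split of singles versus doubles (not merely the number of cancelled closes) is the one genuinely combinatorial point.

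With these counts the coordinates of \eqref{leq-bra} become explicit: $c'_{\alpha_1+\alpha_2}=\min\{b,d\}+(\#\text{singles})$ and $c'_{\alpha_1+2\alpha_2}=\min\{a,c\}+(\#\text{doubles})$, while $c'_{\alpha_1}$ gathers the $(b-d)_+$ surviving small opens, the $c$ big opens of the final block, and the $(P-Q)_+$ big opens surviving the second step, and $c'_{\alpha_2}$ gathers the $b$ far-left small closes, the $(Q-2P)_+$ small closes surviving the second step, and twice the $(c-a)_+$ surviving big closes. Substituting the three-case formula for singles and doubles and simplifying the nested $\min$, $\max$, and $(\cdot)_+$ operations yields the expressions in \eqref{leq-alg}; because all of these are piecewise-linear (tropical) expressions, this simplification is purely formal, and it also makes weight-preservation of the bracketing map manifest.

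The last and most delicate stage is to match these to Lemma~\ref{lem:LBZ}. By the weight reduction above it suffices to prove the two tropical identities $\#\text{doubles}=\pi_2-\pi_1-\min\{a,c\}$ and $\#\text{singles}=2\pi_1-\pi_2-\min\{b,d\}$, equivalently $c'_{\alpha_1+2\alpha_2}=\pi_2-\pi_1$ and $c'_{\alpha_1+\alpha_2}=2\pi_1-\pi_2$. Both sides are continuous and piecewise-linear in $(a,b,c,d)$, so it is enough to compare them on the finitely many top-dimensional chambers cut out by the linear forms occurring in $\pi_1$, $\pi_2$, $P$, and $Q$; on each such chamber every $\min$, $\max$, and $(\cdot)_+$ resolves to a single affine form and the claimed identity reduces to an equality of affine functions, which is immediate. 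I expect this chamber bookkeeping to be the main obstacle: there are several regions to treat, and the correspondence between the combinatorial double/single counts and the tropicalizations of $\pi_2-\pi_1$ and $2\pi_1-\pi_2$ must be tracked carefully. As a global consistency check one can confirm that the composite of the bracketing map with itself is the identity, matching the fact that two successive $4$-term braid moves return the original reduced expression.
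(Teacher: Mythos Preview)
Your strategy is correct and would yield a valid proof, but you have inverted the paper's key simplification and thereby made the final stage much harder than it needs to be. The paper proceeds just as you do through the equivalence of \eqref{leq-bra} and \eqref{leq-alg}, but then it matches the \emph{outer} coordinates $c'_{\alpha_1}$ and $c'_{\alpha_2}$ directly to Lemma~\ref{lem:LBZ} and uses weight preservation (together with the linear independence of $\alpha_1+\alpha_2$ and $\alpha_1+2\alpha_2$) to force the middle two. The point is that the outer formulas in Lemma~\ref{lem:LBZ} have the shape $(\text{linear form})-\pi_j$, so the identity $x-\min\{A,B,C\}=\max\{x-A,x-B,x-C\}$ reduces each check to a single line; no chamber decomposition is needed at all.

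By contrast, your plan targets $c'_{\alpha_1+\alpha_2}=2\pi_1-\pi_2$ and $c'_{\alpha_1+2\alpha_2}=\pi_2-\pi_1$, which are genuine differences of tropical minima and do not collapse to a single $\max$; this is precisely why you anticipate the ``chamber bookkeeping'' being the main obstacle. It isn't wrong, just unnecessary. Your careful analysis of the second cancellation step (the three cases $Q\le P$, $P\le Q\le 2P$, $Q\ge 2P$, and the observation that maximality of total cancelled brackets pins down the single/double split) is more detailed than anything the paper writes out, and would be useful if one wanted to make the ``straightforward'' equivalence of \eqref{leq-bra} and \eqref{leq-alg} fully explicit. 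The involutivity check you mention at the end is a nice sanity test but is not used in either argument.
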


\begin{proof}
That \eqref{leq-bra} and \eqref{leq-alg} agree is straightforward. We must show that they agree with Lemma \ref{lem:LBZ}. 
The first and fourth formulas in \eqref{leq-alg} agree with the corresponding formulas in Lemma \ref{lem:LBZ} by a straightforward calculation. Since $\alpha_1+\alpha_2$ and $\alpha_1+2\alpha_2$ are linearly independent, it suffices to show \eqref{leq-bra} gives data of the correct weight. So, let $\wt(b)= x \alpha_1+ y \alpha_2$. Clearly
\begin{equation}
 x= \# (\ +\  \# \Big ( \quad \text{ and } \quad y= \# )\ +\  2 \# \Big ).
\end{equation}
But then 
\begin{equation}
\begin{aligned}
c'_{\alpha_2} + 2 c'_{\alpha_1+2\alpha_2} + c'_{\alpha_1+\alpha_2} &= \# )\ +\  2 \# \Big ) = y, \text{ and}  \\
c'_{\alpha_1} + c'_{\alpha_1+\alpha_2} + c'_{\alpha_1+2\alpha_2} &= \# (\ +\  \# \Big (= x.
\end{aligned}
\end{equation}
So, $\wt(\cc')$ is in fact correct. 
\end{proof}

\begin{Remark}\label{rem:newlfs}
The analogue of Lemma \ref{lem:newlfs} in the case where $\mathfrak{g} = C_2$, $\alpha_1$ is the short root, and $\alpha_2$ is the long root can be obtained by interchanging the roles of $\alpha_1$ and $\alpha_2$ and reversing the equalities in Equation \eqref{leq-bra}; that is, if $\cc(b) = (c_{\alpha_1}, c_{2\alpha_1+\alpha_2}, c_{\alpha_1+\alpha_2}, c_{\alpha_2})$ is the Lusztig data for $b \in B(\infty)$ with respect to one reduced expression, then the Lusztig data $\cc'(b)$ with respect to the other reduced expression can be found using bracketing sequence
\[
T(\cc)= \underbrace{)\cdots)}_{c_{\alpha_1+\alpha_2}}\ \
\underbrace{\Big (\cdots\Big (}_{c_{\alpha_2}}\ \ 
\underbrace{\Big )\cdots \Big )}_{c_{2\alpha_1+\alpha_{2}}}\ \
\underbrace{(\cdots(}_{c_{\alpha_1+\alpha_2}}\ \
\underbrace{)\cdots)}_{c_{\alpha_1}}\ \
\underbrace{\Big (\cdots \Big (}_{c_{2\alpha_1+\alpha_2}}
\]
and setting
\begin{equation} \label{leq-bra-C}
\begin{aligned}
 c'_{\alpha_1} &= \#\,\text{uncanceled } )\ +\ 2 \#\,\text{uncanceled } \Big ) \\
 c'_{2\alpha_1+\alpha_2} &= \#\,\text{canceled }\Big (\Big)\ +\  \#\,\text{canceled } \Big ( )) \\
c'_{\alpha_1+\alpha_2} &= \#\,\text{canceled }(  ) \ +\  \# \,\text{canceled } \Big ( ) \\
 c'_{\alpha_2} &=  \#\,\text{uncanceled } (\  +\  \#\,\text{uncanceled } \Big (  .
\end{aligned}
\end{equation}
\end{Remark}

\section{Examples: Calculating crystal operators on PBW bases in general} \label{sec:exg}

For any two reduced expressions, we can understand the map $R_{\ii}^{\ii'}\colon \bz_{\ge0}^N \longrightarrow \bz_{\ge0}^N$ sending $\cc^\ii(b)$ to $\cc^{\ii'}(b)$ by finding a way to move from $\ii$ to $\ii'$ by a sequence of braid moves, and composing the maps from \S\ref{sec:PBWe}. This gives a way to calculate any $f_i$.

\begin{Example}\label{ex:non_adapted} 
Let $\mathfrak{g}$ be of type $A_4$. Then $\ii = (1,3,2,1,3,2,4,3,2,1)$ is a reduced expression, and the corresponding order on $\Phi^+$ is
\[
1 \prec 3 \prec 123 \prec 23 \prec 12 \prec 2 \prec 1234 \prec 234 \prec 34 \prec 4,
\]
where $1$ is identified with $\alpha_1$, $1234$ with $\alpha_1+\alpha_2+\alpha_3+\alpha_4$, and so on.  Consider
\[
b = F_1^{(1)} F_3^{(1)} F_{123}^{(5)} F_{23}^{(3)} F_{12}^{(2)} F_{2}^{(3)} F_{1234}^{(4)} F_{234}^{(0)} F_{34}^{(1)} F_{4}^{(1)} \in \CB_\ii.
\]
Calculating $f_1 b$ is easy: the exponent of $F_1$ just increases by $1$.
We now compute $f_2b$:
\[
\arraycolsep=5pt
\begin{array}{rccccccccccl}
b &=& F_1^{(1)} & F_3^{(1)} & F_{123}^{(5)} & F_{23}^{(3)} & F_{12}^{(2)} & F_{2}^{(3)} & F_{1234}^{(4)} & F_{234}^{(0)} & F_{34}^{(1)} & F_{4}^{(1)} \\[5pt]
&\simeq& {\color{red}F_3^{(1)}} & {\color{red}F_1^{(1)}} & F_{123}^{(5)} & F_{23}^{(3)} & F_{12}^{(2)} & F_{2}^{(3)} & F_{1234}^{(4)} & F_{234}^{(0)} & F_{34}^{(1)} & F_{4}^{(1)} \\[5pt]
&\simeq& F_3^{(1)} & {\color{red}F_{23}^{(7)}} & {\color{red}F_{123}^{(1)}} & {\color{red}F_{1}^{(5)}} & F_{12}^{(2)} & F_{2}^{(3)} & F_{1234}^{(4)} & F_{234}^{(0)} & F_{34}^{(1)} & F_{4}^{(1)} \\[5pt]
&\simeq& F_3^{(1)} & F_{23}^{(7)} & F_{123}^{(1)} & {\color{red}F_{2}^{(2)}} & {\color{red}F_{12}^{(3)}} & {\color{red}F_{1}^{(4)}} & F_{1234}^{(4)} & F_{234}^{(0)} & F_{34}^{(1)} & F_{4}^{(1)} \\[5pt]
&\simeq& F_3^{(1)} & F_{23}^{(7)} & {\color{red}F_{2}^{(2)}} & {\color{red}F_{123}^{(1)}} & F_{12}^{(3)} & F_{1}^{(4)} & F_{1234}^{(4)} & F_{234}^{(0)} & F_{34}^{(1)} & F_{4}^{(1)} \\[5pt]
&\simeq& {\color{red}F_{2}^{(8)}} & {\color{red}F_{23}^{(1)}} & {\color{red}F_{3}^{(7)}} & F_{123}^{(1)} & F_{12}^{(3)} & F_{1}^{(4)} & F_{1234}^{(4)} & F_{234}^{(0)} & F_{34}^{(1)} & F_{4}^{(1)}, \\[5pt]
f_2b &\simeq& {\color{blue}F_{2}^{(9)}} & F_{23}^{(1)} & F_{3}^{(7)} & F_{123}^{(1)} & F_{12}^{(3)} & F_{1}^{(4)} & F_{1234}^{(4)} & F_{234}^{(0)} & F_{34}^{(1)} & F_{4}^{(1)} \\[5pt]
&&&&&& \vdots \\[5pt]
&\simeq& F_1^{(1)} & F_3^{(1)} & {\color{blue}F_{123}^{(4)}} & {\color{blue}F_{23}^{(4)}} & {\color{blue}F_{12}^{(3)}} & F_{2}^{(3)} & F_{1234}^{(4)} & F_{234}^{(0)} & F_{34}^{(1)} & F_{4}^{(1)}.
\end{array}
\]
The first five steps perform braid moves and modify the PBW monomial according to the piecewise linear functions from Lemma \ref{Lem:moves}.  (To see this, recall that, by Lemma \ref{Lem:braid_on_roots}, two roots can be interchanged by a $2$-term braid move exactly if they are perpendicular, and a $3$-term braid move applies to consecutive roots $\beta, \beta',\beta''$ if and only if $\beta'=\beta+\beta''$.) Then $F_2$ is on the left, so to get $f_2b$ just add one to its exponent. Then perform braid moves and the corresponding pieces linear functions to get back to the original order.
\end{Example}

\begin{Example}\label{ex:pbw_compute}
Let $\mathfrak{g}$ be of type $D_4$, and $\ii = (1,2,3,4,2,1,2,3,4,2,3,4)$, where $2$ is the trivalent node (see Figure \eqref{fig:diagrams}).  The corresponding order on $\Phi^+$ is
\[
1 \prec 12 \prec 123 \prec 124 \prec 1234 \prec 12234 \prec 2 \prec 24 \prec 23 \prec 234 \prec 3 \prec 4,
\]
where $1$ is identified with $\alpha_1$, $12$ with $\alpha_1 + \alpha_2$, and so on. Consider 
\[
b= F_1^{(2)}  F_{12}^{(1)}  F_{123}^{(4)} F_{124}^{(2)} F_{1234}^{(1)} F_{12234}^{(3)} F_{2}^{(3)}  F_{24}^{(1)} F_{23}^{(2)} F_{234}^{(1)} F_{3}^{(2)} F_{4}^{(0)} \in \CB_\ii,
\]
The calculation of $f_4 b$, goes as follows:
\[
\arraycolsep=3pt
\begin{array}{rccccccccccccl}
b &= & F_1^{(2)} & F_{12}^{(1)} & F_{123}^{(4)} & F_{124}^{(2)} & F_{1234}^{(1)} & F_{12234}^{(3)} & F_{2}^{(3)} & F_{24}^{(1)} & F_{23}^{(2)} & F_{234}^{(1)} & F_{3}^{(2)} & F_{4}^{(0)} \\[5pt]
&\simeq & F_1^{(2)} & F_{12}^{(1)} & {\color{red}F_{124}^{(2)}} & {\color{red}F_{123}^{(4)}} & F_{1234}^{(1)} & F_{12234}^{(3)} & F_{2}^{(3)} & F_{24}^{(1)} & F_{23}^{(2)} & F_{234}^{(1)} & {\color{red}F_{4}^{(0)}} & {\color{red}F_{3}^{(2)}} \\[5pt]
&\simeq & F_1^{(2)} & F_{12}^{(1)} & F_{124}^{(2)} & F_{123}^{(4)} & F_{1234}^{(1)} & F_{12234}^{(3)} & F_{2}^{(3)} & F_{24}^{(1)} & {\color{red}F_{4}^{(1)}} & {\color{red}F_{234}^{(0)}} & {\color{red}F_{23}^{(3)}} & F_{3}^{(2)} \\[5pt]
&&&&&&& \vdots \\[5pt]
 &\simeq & {\color{red}F_4^{(2)}} & {\color{red} F_{1}^{(2)}} & F_{124}^{(1)} & F_{12}^{(2)} & F_{1234}^{(1)} & F_{123}^{(4)} & F_{12234}^{(3)} & F_{24}^{(1)} & F_{2}^{(3)} & F_{234}^{(0)} & F_{23}^{(3)} & F_{3}^{(2)}, \\[5pt]
f_4 b &\simeq & {\color{blue}F_4^{(3)}} & F_{1}^{(2)} & F_{124}^{(1)} & F_{12}^{(2)} & F_{1234}^{(1)} & F_{123}^{(4)} & F_{12234}^{(3)} & F_{24}^{(1)} & F_{2}^{(3)} & F_{234}^{(0)} & F_{23}^{(3)} & F_{3}^{(2)} \\[5pt]
&&&&&&& \vdots \\[5pt]
&\simeq & F_1^{(2)} & F_{12}^{(1)} & {\color{blue}F_{123}^{(3)}} & F_{124}^{(2)} & {\color{blue}F_{1234}^{(2)}} & F_{12234}^{(3)} & F_{2}^{(3)} & F_{24}^{(1)} & F_{23}^{(2)} & F_{234}^{(1)} & F_{3}^{(2)} & F_{4}^{(0)}.
\end{array}
\]
\end{Example}

\begin{Example}\label{ex:C_PBW}
In type $C_3$ where $\alpha_3$ is the long root (see Figure \eqref{fig:diagrams}), set $\ii = (1,2,3,2,1,2,3,2,3)$.  The corresponding order on $\Phi^+$ is
\[
1 \prec 12 \prec 11223 \prec 123 \prec 1223 \prec 2 \prec 223 \prec 23 \prec 3,
\]
where $1$ is identified with $\alpha_1$, $11223$ is identified with $2\alpha_1+2\alpha_2+\alpha_3$, and so on.  Consider
\[
b = F_1^{(4)} F_{12}^{(1)} F_{11223}^{(3)} F_{123}^{(2)} F_{1223}^{(0)} F_{2}^{(0)} F_{223}^{(5)} F_{23}^{(1)} F_{3}^{(2)} \in \CB_\ii.
\]
Computing $f_3$:
\[
\arraycolsep=5pt
\begin{array}{rcccccccccl}
b &=& F_1^{(4)} & F_{12}^{(1)} & F_{11223}^{(3)} & F_{123}^{(2)} & F_{1223}^{(0)} & F_{2}^{(0)} & F_{223}^{(5)} & F_{23}^{(1)} & F_{3}^{(2)} \\[5pt]
&\simeq& F_1^{(4)} & F_{12}^{(1)} & F_{11223}^{(3)} & F_{123}^{(2)} & F_{1223}^{(0)} & {\color{red}F_{3}^{(6)}} & {\color{red}F_{23}^{(0)}} & {\color{red}F_{223}^{(2)}} & {\color{red}F_{2}^{(7)}} \\[5pt]
&\simeq& F_1^{(4)} & F_{12}^{(1)} & F_{11223}^{(3)} & F_{123}^{(2)} & {\color{red}F_{3}^{(6)}} & {\color{red}F_{1223}^{(0)}} & F_{23}^{(0)} & F_{223}^{(2)} & F_{2}^{(7)} \\[5pt]
&\simeq& F_1^{(4)} & {\color{red}F_{3}^{(7)}} & {\color{red}F_{123}^{(1)}} & {\color{red}F_{11223}^{(3)}} & {\color{red}F_{12}^{(2)}} & F_{1223}^{(0)} & F_{23}^{(0)} & F_{223}^{(2)} & F_{2}^{(7)} \\[5pt]
&\simeq& {\color{red}F_3^{(7)}} & {\color{red}F_{1}^{(4)}} & F_{123}^{(1)} & F_{11223}^{(3)} & F_{12}^{(2)} & F_{1223}^{(0)} & F_{23}^{(0)} & F_{223}^{(2)} & F_{2}^{(7)}, \\[5pt]
f_3b &\simeq& {\color{blue}F_3^{(8)}} & F_{1}^{(4)} & F_{123}^{(1)} & F_{11223}^{(3)} & F_{12}^{(2)} & F_{1223}^{(0)} & F_{23}^{(0)} & F_{223}^{(2)} & F_{2}^{(7)} \\[5pt]
&\simeq& {\color{red}F_1^{(4)}} & {\color{blue}F_{3}^{(8)}} & F_{123}^{(1)} & F_{11223}^{(3)} & F_{12}^{(2)} & F_{1223}^{(0)} & F_{23}^{(0)} & F_{223}^{(2)} & F_{2}^{(7)} \\[5pt]
&\simeq& F_1^{(4)} & {\color{red}F_{12}^{(1)}} & {\color{red}F_{11223}^{(3)}} & {\color{red}F_{123}^{(2)}} & {\color{blue}F_{3}^{(7)}} & F_{1223}^{(0)} & F_{23}^{(0)} & F_{223}^{(2)} & F_{2}^{(7)} \\[5pt]
&\simeq& F_1^{(4)} & F_{12}^{(1)} & F_{11223}^{(3)} & F_{123}^{(2)} & {\color{red}F_{1223}^{(0)}} & {\color{blue}F_{3}^{(7)}} & F_{23}^{(0)} & F_{223}^{(2)} & F_{2}^{(7)} \\[5pt]
&\simeq& F_1^{(4)} & F_{12}^{(1)} & F_{11223}^{(3)} & F_{123}^{(2)} & F_{1223}^{(0)} & {\color{red}F_{2}^{(0)}} & {\color{blue}F_{223}^{(4)}} & {\color{blue}F_{23}^{(3)}} & {\color{red}F_{3}^{(2)}}.
\end{array}
\]
Notice that the exponent of $F_{23}$ increases by two. 
\end{Example}

\section{Results}\label{sec:main}

We now have a method to calculate crystal operators on any PBW monomial, but it can be a long process. However, for some words, some $f_i$ can be calculated by a simple bracketing procedure. We discuss those words here. In fact, in most types there are a few very nice words where all the $f_i$ are easy to calculate.

\subsection{Simply braided words and bracketing rules}
Fix $\ii \in R(w_0)$. The elements of $B(\infty)$ are parameterized by their $\ii$-Lusztig data, so we can think of $B(\infty)$ as a crystal structure on $\bz^N$. Specifically,  $f_i(\cc) = \cc'$ if and only if there is some $b\in B(\infty)$ such that $\cc = \cc^\ii(b)$ and $\cc' = \cc^\ii(f_ib)$.  

\begin{Definition} \label{def:sa} 
Fix a reduced expression $\ii$ for $w_0$, and $i \in I$. We say that $\ii$ is \defn{simply braided for $i$} if one can perform a sequence of braid moves to $\ii$ to get to a word $\ii'$ with $i'_1=i$, and each move in the sequence is either
\begin{itemize}
\item a $2$-term braid move, or
\item a braid move such that $\alpha_i$ is the rightmost of the roots affected.
\end{itemize}
We call $\ii$ \defn{simply braided}  if it is simply braided for all $i \in I$. 
\end{Definition}

\begin{Remark}
Examples \ref{ex:pbw_compute} and \ref{ex:C_PBW} both show computations in simply braided cases. 
\end{Remark}

Fix $i\in I$ and $\ii \in R(w_0)$ which is simply braided for $i$. Fix a sequence of braid moves as in Definition \ref{def:sa}, and let $M_1, \ldots, M_k$ be the non-trivial braid moves. For each $\ell$, let $\Delta_\ell$ be the corresponding rank $2$ root system.

\begin{Definition}
\label{def:bracket_rule}
Assume $\ii \in R(w_0)$ is simply braided for $i$. 
Fix an $\ii$-Lusztig datum $\cc$. For each $\ell$, let $\cc^\ell$ be the rank $2$ Lusztig datum defined by $c^\ell_{\alpha_i}=0$, and for all other $\beta \in \Delta_\ell$, $c^\ell_\beta=c_\beta$. Let
\[
R_{i;\ell}(\cc) = \varepsilon_i(\cc^\ell)
\ \ \ \ \ \text{ and } \ \ \ \ \ 
L_{i;\ell}(\cc) = \langle \alpha_i^\vee, \wt(\cc) \rangle + \varepsilon_i(\cc^\ell)+  \varepsilon^*_i(\cc^\ell).
\]
Set 
\[
S_i(\cc)= 
\underbrace{)\cdots)}_{R_{i;k}} \ 
\underbrace{(\cdots(}_{L_{i;k}} \
\cdots\
\underbrace{)\cdots)}_{R_{i;2}} \
\underbrace{(\cdots(}_{L_{i;2}} \
\underbrace{)\cdots)}_{R_{i;1}} \
\underbrace{(\cdots(}_{L_{i;1}} \
\underbrace{) \cdots )}_{c_{\alpha_i}}.
\]
\end{Definition}

\begin{Remark}
The integer $L_{i;\ell}(\cc)$ is the number of times one may apply $f_i$ to $\cc^\ell$ before $c^\ell_{\alpha_i}$ becomes nonzero, and is sometimes called ``jump,'' see \cite{LV11}. 
\end{Remark}

\begin{Theorem} \label{th:brgeneral}
Fix $i\in I$ and assume $\ii \in R(w_0)$ simply braided for $i$. Then, for any $\ii$-Lusztig datum $\cc$,
$f_i(\cc)$ is 
\begin{itemize}
\item the $\ii$-Lusztig datum where $\cc^k$ is changed to $f_i(\cc^k)$ if the leftmost uncanceled ``$($" in $S_i(\cc)$ comes from a root in $\Delta_k$, or
\item the $\ii$-Lusztig datum where $c_{\alpha_i}$ has increased by $1$ if there is no uncanceled ``$($". 
\end{itemize}
\end{Theorem}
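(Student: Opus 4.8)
The plan is to reduce the general statement to the rank~$2$ braid-move calculations already established in Lemmas~\ref{Lem:moves} and~\ref{lem:newlfs}, by tracking what happens as we move $\alpha_i$ to the front through the sequence $M_1,\dots,M_k$ of braid moves promised by the simply-braided hypothesis. The key structural point is that, by Definition~\ref{def:sa}, every nontrivial braid move $M_\ell$ has $\alpha_i$ as the rightmost affected root. This means that at the moment $M_\ell$ is performed, the root $\alpha_i$ sits just to the left of the block $\Delta_\ell$ (minus $\alpha_i$), and after the move $\alpha_i$ has shifted leftward past $\Delta_\ell$. So the whole process amounts to repeatedly sliding $\alpha_i$ to the left, and each slide is governed by exactly one rank~$2$ calculation involving the data in $\Delta_\ell$ together with the current multiplicity of $\alpha_i$.

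\emph{Interpreting the brackets.} First I would verify that the bracket string $S_i(\cc)$ faithfully records the cumulative effect of these slides. After converting to $\ii'$ with $i'_1=i$, Proposition~\ref{prop:crystal_op_PBW}(i) says $f_i$ simply increments $c_{\alpha_i}$; the content of the theorem is to pull this back through the braid moves. I would argue that in each rank~$2$ system $\Delta_\ell$, the quantities $R_{i;\ell}(\cc)=\varepsilon_i(\cc^\ell)$ and $L_{i;\ell}(\cc)$ measure, respectively, how many applications of $f_i$ are ``absorbed'' into $\Delta_\ell$ before they spill past (the closing brackets $)$), versus how many $\alpha_i$'s can accumulate before the block saturates (the opening brackets $($). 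Concretely, $R_{i;\ell}$ counts the copies of $f_i$ already committed to the left of $\Delta_\ell$ that must be cancelled against new applications, while $L_{i;\ell}$ is the ``jump,'' the number of $f_i$'s that pass through freely. The rank~$2$ formulas from Lemma~\ref{Lem:moves} and Lemma~\ref{lem:newlfs} show precisely that, in types $A_2$ and $B_2/C_2$, applying $f_i$ either increases the smallest component (when there is slack, corresponding to an uncanceled $($) or gets pushed entirely onto $c_{\alpha_i}$ (when the block is saturated). The bracketing-and-cancellation procedure is exactly the standard signature rule that composes these local rules across the chain $\Delta_1,\dots,\Delta_k$.

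\emph{The inductive argument.} I would then induct on $k$, the number of nontrivial braid moves. The base case $k=0$ is Proposition~\ref{prop:crystal_op_PBW}(i). For the inductive step, I would peel off $M_1$, the last move needed to expose $\alpha_i$. Applying $f_i$ to the fully-braided word increments $c_{\alpha_i}$; pulling this back through $M_1$ uses the rank~$2$ rule on $\Delta_1$, and the result is: if the block $\Delta_1$ has slack, $f_i$ lands inside $\Delta_1$ (i.e.\ $\cc^1 \mapsto f_i(\cc^1)$), corresponding to the leftmost uncanceled $($ coming from $\Delta_1$; otherwise the $f_i$ passes through $M_1$ unchanged and we recurse on the word with $M_1$ removed, whose bracket string is $S_i(\cc)$ with the final $R_{i;1},L_{i;1},c_{\alpha_i}$ block deleted. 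The cancellation rule for brackets is designed exactly so that deleting this block and re-running the algorithm on the shorter string identifies the \emph{same} leftmost uncanceled $($, giving consistency between the recursion and the global bracket string.

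\emph{The main obstacle.} The hardest part is establishing that the local rank~$2$ piecewise-linear maps \emph{compose} correctly into the single global signature rule---that is, that an $f_i$ not absorbed by $\Delta_\ell$ is passed to $\Delta_{\ell+1}$ with exactly the bracket-bookkeeping encoded by $R_{i;\ell}$ and $L_{i;\ell}$, and in particular that the $*$-crystal quantities $\varepsilon_i^*(\cc^\ell)$ entering $L_{i;\ell}$ correctly count the incoming $f_i$'s from the right. This requires carefully relating $\varepsilon_i,\varepsilon_i^*$ computed in each isolated rank~$2$ datum $\cc^\ell$ to the genuine crystal quantities for the full word; the simply-braided condition (that $\alpha_i$ is always rightmost) is what guarantees these rank~$2$ computations do not interfere with one another, so that the blocks can be concatenated into one string and cancelled by a single pass. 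I expect the verification that the signature/cancellation rule is the correct normal form for this composition---rather than any individual rank~$2$ identity---to be the technical heart of the argument.
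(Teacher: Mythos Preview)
Your approach is essentially the paper's: induct on the number of braid moves, peel off one move, invoke the hypothesis on the shorter word, undo the move. Two points need correction. First, $M_1$ is the \emph{first} move performed (its block sits immediately left of $c_{\alpha_i}$ in $S_i(\cc)$), not ``the last move needed to expose $\alpha_i$''; and the argument does not go to the fully $i$-adapted word before pulling back---one applies a single move and uses the hypothesis directly on the resulting word, so your phrasing ``applying $f_i$ to the fully-braided word\dots pulling this back through $M_1$'' misdescribes the induction.

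Second, you misidentify the technical crux. You worry that the rank-$2$ maps must be shown to compose globally into a signature rule and that the $\varepsilon_i^*$ quantities might interfere across blocks. In fact only one local identity is needed: after performing $M_1$, the new value $c'_{\alpha_i}$ equals the number of uncanceled ``$)$'' in the substring $)^{R_{i;1}}\,(^{L_{i;1}}\,)^{c_{\alpha_i}}$. This is a rank-$2$ fact, checkable from Lemmas~\ref{Lem:moves} and~\ref{lem:newlfs} together with the definitions of $R_{i;1}$ and $L_{i;1}$, and it immediately implies that replacing this substring by $)^{c'_{\alpha_i}}$ leaves the position of the leftmost uncanceled ``$($'' unchanged whenever that bracket lies in some $\Delta_j$ with $j>1$. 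Your sentence ``deleting this block and re-running identifies the same leftmost uncanceled $($'' is the right intuition but imprecise: the block is replaced, not deleted, and the content is precisely that this replacement is transparent to brackets further left. With this in hand the case split is clean: if the leftmost uncanceled ``$($'' is in $\Delta_j$ for $j>1$, induction handles it and undoing $M_1$ restores the $\Delta_1$ data untouched; otherwise after $M_1$ there is no uncanceled ``$($'' at all, so by induction $f_i$ increments $c_{\alpha_i}$, and undoing $M_1$ is by definition the rank-$2$ operator $f_i$ on the $\Delta_1$ data. No global composition argument is required.
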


\begin{proof}
Proceed by induction on the number $m_i(\ii)$ of braid moves required to change $\ii$ to an $i$-adapted word, using only moves of the form allowed by Definition \ref{def:sa}.  The case where $i_1=i$ is trivial. So, assume $\ii'$ is related to $\ii$ by an allowable braid move and that $m_i(\ii')= m_i(\ii)-1$. If the braid move from $\ii$ to $\ii'$ is a $2$-term braid move, then it does not change the order of the brackets in $S_i(\cc)$ and, by Lemmas \ref{Lem:moves} and \ref{lem:newlfs} and Remark \ref{rem:newlfs}, it does not change any $c_\beta,$ so does not change the values of $L_{i;j}$ and $R_{i;j}$.  This is true both before and after applying $f_i$, so the result follows by induction.

If the braid move involves more then two terms, then by definition it must be the move $M_1$. First consider the case when the leftmost uncanceled ``$($" is in $\Delta_j$ for $j>1$. Applying the move $M_1$ gives a new Lusztig datum, $\cc'$, where $c'_{\alpha_i}$ is the number of uncanceled right brackets in the substring $)^{R_{i;1}(\cc)} (^{L_{i;1}(\cc)} )^{c_{\alpha_i}}$. Doing this doesn't change the placement of the first uncanceled ``$($". By induction $f_i$ affects a root not in $\Delta_1$, and acts as in the statement. Undoing the move $M_1$ gives the result. 

It remains to consider the cases when either the leftmost uncanceled ``$($" is in $\Delta_1$ or there is no uncanceled ``$($". Again start by applying $M_1$. The resulting Lusztig datum has no uncanceled ``$($", so by induction applying $f_i$ just adds one to $c_{\alpha_i}$.  Undoing $M_1$ gives the result of applying $f_i$. But this procedure has exactly applied $f_i$ to the Lusztig data ${\cc^1}$. 
See Figure \ref{fig:induction}.
\end{proof}

\begin{figure}
\[
\begin{array}{cccccccccl}
F_1^{(1)} & F_{12}^{(2)} & F_{123}^{(1)} & F_{1234}^{(5)} & F_{2}^{(2)} & F_{23}^{(3)}  & F_{3}^{(1)} & F_{234}^{(4)}& F_{34}^{(1)} & F_{4}^{(1)} \\
 & ) & ((& & ))) & ((&   )&&    & \\[12pt]

F_1^{(1)} & F_{12}^{(2)} & F_{123}^{(1)} & F_{1234}^{(5)} & {\color{red}F_{3}^{(3)}} & {\color{red}F_{23}^{(1)}} & {\color{red}F_{2}^{(4)}} & F_{234}^{(4)}  & F_{34}^{(1)} & F_{4}^{(1)} \\
 & ) & ((& & ))) & &   &&    & 
 \\[12pt]

F_1^{(1)} & F_{12}^{(2)} & F_{123}^{(1)} & F_{1234}^{(5)} & {\color{blue}F_{3}^{(4)}} & F_{23}^{(1)} & F_{2}^{(4)} & F_{234}^{(4)}  & F_{34}^{(1)} & F_{4}^{(1)} \\ 
& ) & ((& & ))){\color{blue}\boldsymbol{)}} & &   &&    & \\[12pt]

F_1^{(1)} & F_{12}^{(2)} & F_{123}^{(1)} & F_{1234}^{(5)} & {\color{blue}F_{2}^{(1)}} & {\color{blue}F_{23}^{(4)}} & {\color{red}F_{3}^{(1)}} & F_{234}^{(4)}  & F_{34}^{(1)} & F_{4}^{(1)} \\ & ) & ((& & )))) & (& )  &&    & \\[12pt]
\end{array}
\]
\caption{\label{fig:induction}The inductive step in the proof of Theorem \ref{th:brgeneral}.  Here we are applying $f_3$ in type $A_3$. At each step we have placed the string of brackets $S_3( \cc )$ below the monomial $F^\cc$ (note, for example, that the brackets for the roots $F_{12}$ and $F_{123}$ need to be reversed). This is the final case, where the first uncanceled bracket is in $\cc^1$. The first braid move is the move $M_1$, and involves the roots $2$, $23$, and $3$. The new string of brackets after that move is obtained by deleting the brackets corresponding to these three roots, and replacing them with the number of uncanceled right brackets in the string we would use to apply $f_3$ in that rank 2 case. There were no uncanceled left brackets further to the left, so now there are no uncanceled left brackets at all, and, by induction, $f_3$ increases the exponent of $F_3$ by $1$. Then we undo the move $M_1$. The result is the result of applying $f_3$ to the rank  2 monomial $F_2^{(2)} F_{23}^{(3)} F_3^{(1)}$.   }
\end{figure}

\begin{Remark}
Theorem \ref{th:brgeneral} implies that the sub-root systems $\Delta_1, \dots, \Delta_k$ don't depend on the sequence of braid moves (provided it satisfies the conditions of Definition \ref{def:sa}).
\end{Remark}

\begin{Remark}
The reduced expression in Example \ref{ex:non_adapted} is not simply braided, and hence the action of $f_2$ is more complicated: it affects three roots, and it follows from Theorem \ref{th:brgeneral} that this cannot happen in simply braided simply-laced cases.
\end{Remark}

\subsection{Existence of simply braided words}

In this section, we show that any ``good enumeration" of a Dynkin diagram, as defined in \cite{Lit98}, gives a simply braided word. These words are closely related to the Littelmann's ``nice decompositions." In particular, such words exist in all types except $E_8$ and $F_4$. 

\begin{Definition} \label{def:ti}
For $J\subseteq I$, let $W_J$ denote the parabolic subgroup of $W$ generated by $J$.  Fix $i \in I$. Let $\tau^i$ be the minimal length representative of $w_0$ in $W_{I \backslash \{ i \}}\backslash W$. 
\end{Definition}

For each subset $I'$ of $I$, let $\sigma^{I'}$ be the diagram automorphism of $I'$ defined by $-w_0^{I'}$, where $w_0^{I'}$ is the longest element of the Weyl group for $I'$. 
Note that $\tau^i= (w_0^{I \backslash \{ i \}})^{-1}w_0$, so $(\tau^i)^{-1}$ takes simple roots other than $\alpha_i$ to simple roots according to the map $(\sigma^I)^{-1} \sigma^{I \backslash \{ i \}}$. 

\begin{Definition} \label{def:t(j)}
For any enumeration $i_1,\dots,i_n$ of $I$, let 
$\tau^{(k)} = (\sigma^I)^{-1} \sigma^{I \backslash \{ i_1, \ldots, i_{k-1} \} }w$, where $w$ is the minimal length representative of $w_0^{I \backslash \{ i_1, \ldots, i_{k-1} \}}$ in $W_{I\setminus\{i_1,\dots,i_k\}}\backslash W_{I\setminus\{i_1,\dots,i_{k-1}\}}$.
\end{Definition}

\begin{Lemma} \label{rem:ct}
The reduced expression of $w_0$ corresponding to the convex order from Definition \ref{def:lex-orders} factors as $\tau^{(1)} \tau^{(2)} \cdots \tau^{(n)}$.
\end{Lemma}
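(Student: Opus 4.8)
The plan is to work entirely on the level of inversion sets, comparing the initial segments of the lexicographic order (Definition \ref{def:lex-orders}) with the parabolic coset representatives that assemble into the $\tau^{(k)}$. Write $J_k = I\setminus\{i_1,\dots,i_{k-1}\}$, so that $I = J_1 \supseteq J_2 \supseteq \cdots \supseteq J_{n+1}=\emptyset$, and let $\Phi^+_{J}$ denote the positive roots supported on $J$. First I would record the block structure of the order $\prec$: by clause (i) of Definition \ref{def:lex-orders}, a root $\beta = \sum p_i\alpha_i$ is sorted first according to $\min\{m : p_{i_m}\neq 0\}$, so $\prec$ breaks $\Phi^+$ into consecutive intervals $B_1 \prec B_2 \prec \cdots \prec B_n$ with $B_k = \Phi^+_{J_k}\setminus \Phi^+_{J_{k+1}}$. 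In particular the initial segment consisting of the first $|B_1|+\cdots+|B_k|$ roots is exactly $\Phi^+\setminus\Phi^+_{J_{k+1}}$.

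Next I would identify the group elements cut out by these initial segments. Since $\prec$ is convex (Lemma \ref{lem:lexcon}), Theorem \ref{thm:papi} gives a reduced word $\ii = (c_1,\dots,c_N)$ for $w_0$ realizing $\prec$, and the standard fact that an initial segment of a convex order is the inversion set of the corresponding prefix shows that the prefix $v_k := s_{c_1}\cdots s_{c_{m_k}}$ (where $m_k = |B_1|+\cdots+|B_k|$) is the unique element of $W$ whose inversion set is $\Phi^+\setminus\Phi^+_{J_{k+1}}$. I would then check directly that $(w_0^{J_{k+1}})^{-1}w_0$ has this inversion set: for a positive root $\gamma$ one computes $\bigl((w_0^{J_{k+1}})^{-1}w_0\bigr)^{-1}\gamma = w_0\, w_0^{J_{k+1}}\gamma$, which is negative precisely when $\gamma\notin\Phi^+_{J_{k+1}}$, using that $w_0^{J_{k+1}}$ permutes $\Phi^+\setminus\Phi_{J_{k+1}}$ while $w_0$ reverses all signs. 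Hence $v_k = (w_0^{J_{k+1}})^{-1}w_0$, which is the minimal-length representative of $w_0$ in $W_{J_{k+1}}\backslash W$, i.e.\ the tower analogue of the $\tau^i$ of Definition \ref{def:ti}.

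Finally, the $k$-th factor of $\ii$ is the chunk spelling $u_k := v_{k-1}^{-1}v_k$ (with $v_0 = e$), and from the previous step this is
\[
u_k = \bigl[(w_0^{J_k})^{-1}w_0\bigr]^{-1}\bigl[(w_0^{J_{k+1}})^{-1}w_0\bigr] = w_0\, w_0^{J_k} w_0^{J_{k+1}}\, w_0 .
\]
It remains to recognize $u_k$ as the element $\tau^{(k)}$ of Definition \ref{def:t(j)}. This is where all the diagram-automorphism bookkeeping lives and is the step I expect to be the real obstacle: conjugation by $w_0$ realizes $\sigma^I$ and conjugation by $w_0^{J_k}$ realizes $\sigma^{J_k}$, so rewriting $w_0\, w_0^{J_k}w_0^{J_{k+1}}\, w_0$ in terms of $w^{(k)} = (w_0^{J_{k+1}})^{-1}w_0^{J_k}$ and the automorphisms $\sigma^I,\sigma^{J_k}$ should reproduce $(\sigma^I)^{-1}\sigma^{J_k}w^{(k)}$, which is precisely the relabeling computed, in the rank-one case, in the observation following Definition \ref{def:ti}. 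Granting this identification, $w_0 = v_n = u_1 u_2\cdots u_n = \tau^{(1)}\tau^{(2)}\cdots\tau^{(n)}$, and because each $u_k$ is spelled by a consecutive chunk of $\ii$ of length $\ell(v_k)-\ell(v_{k-1}) = |B_k|$, this is a genuine factorization of the reduced expression, as claimed. Note that no separate analysis of clause (ii) (the within-block order) is needed: the elements $v_k$ are pinned down by their inversion sets alone, so the factors are forced once the blocks have been matched.
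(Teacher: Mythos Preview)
Your approach is correct and close in spirit to the paper's, though organized differently. The paper establishes the same block decomposition $B_1\prec\cdots\prec B_n$, identifies the first factor with $\tau^{(1)}$ by a minimal-length coset argument, and then \emph{inducts on rank}: it observes that $(\tau^{(1)})^{-1}$ realizes the map $(\sigma^I)^{-1}\sigma^{I\setminus\{i_1\}}$ on simple roots, so the remaining factors are the same statement for the sub-root-system $I\setminus\{i_1\}$ after relabeling. You instead pin down all prefixes $v_k$ at once via their inversion sets and then take quotients, which trades the induction for a uniform computation; this is arguably cleaner, since the relabeling bookkeeping is absorbed into a single formula rather than carried through an inductive hypothesis.

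The step you flag as the ``real obstacle'' is not an obstacle at all. Since all the longest elements are involutions and conjugation by $w_0^{J}$ realizes $\sigma^{J}$ on $W_J$, with $w^{(k)}=(w_0^{J_{k+1}})^{-1}w_0^{J_k}=w_0^{J_{k+1}}w_0^{J_k}$ one has
\[
(\sigma^I)^{-1}\sigma^{J_k}w^{(k)}
= w_0\bigl(w_0^{J_k}\,w^{(k)}\,w_0^{J_k}\bigr)w_0
= w_0\,w_0^{J_k}w_0^{J_{k+1}}w_0^{J_k}w_0^{J_k}\,w_0
= w_0\,w_0^{J_k}w_0^{J_{k+1}}\,w_0,
\]
which is exactly your $u_k=v_{k-1}^{-1}v_k$. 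So nothing is missing; the identification goes through immediately once you unwind the definitions, and your final remark that clause (ii) of Definition \ref{def:lex-orders} plays no role is accurate.
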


\begin{Example} \label{ex:yet-another} 
Consider type $D_5$, where $3$ is the trivalent node, and the enumeration $i_1=4$, $i_2=2$, $i_3=3$, $i_4=1$, and $i_5=5$. The corresponding order on roots is
\begin{multline*}
4 \prec  34 \prec 345 \prec  234 \prec 2345 \prec 1234 \prec 12345 \prec 23345 \prec 123345 \prec 1223345 \\
\prec 2 \prec 12 \prec 23 \prec 235 \prec 123 \prec 1235 \prec 3 \prec 35 \prec 1 \prec 5,
\end{multline*}
which corresponds to the reduced expression
\[
s_4 s_3 s_5 s_2 s_3 s_1s_2 s_4 s_3 s_5 s_3 s_4 s_2 s_1 s_3 s_2 s_4 s_3 s_1 s_4.
\]
Then 
\begin{gather*}
s_4 s_3 s_5 s_2 s_3 s_1s_2 s_4 s_3 s_5 = \tau^{(1)}, \\
s_3 s_4 s_2 s_1 s_3 s_2 =  \sigma^{-1} \sigma^{\{ 1,2,3,5\}}  s_2 s_1 s_3 s_5 s_2 s_3 = \tau^{(2)}, \\
s_4s_3 = \sigma^{-1} \sigma^{\{ 1,3,5\}}  s_3s_5= \tau^{(3)}, \ s_1=\tau^{(4)}, \ \text{ and } \ s_4=\tau^{(5)}.
\end{gather*} 
Here $\sigma$ is the diagram automorphism swapping $4$ and $5$, $\sigma^{\{1,2,3,5\}}$ reverses the order of that $A_4$ Dynkin diagram, $\sigma^{\{1,3,5\}}$ swaps $3$ and $5$, and $\sigma^{\{1,5\}}$ and $\sigma^{\{5\}}$ are trivial. 
Notice that all pairs of roots of the form $\beta, \beta+\alpha_5$ are adjacent, as in Lemma \ref{lem:faci1} below. 
\end{Example}

\begin{proof}[Proof of Lemma \ref{rem:ct}]
The convex order from Definition \ref{def:lex-orders} is of the form
\[
\beta_{1,1} \prec \beta_{1,2} \prec \cdots \prec \beta_{1,k_1} \prec \beta_{2,1} \prec \cdots \prec \beta_{1,k_2} \prec \cdots \prec \beta_{(n-1), k_{n-1}} \prec \beta_{n,1},
\]
where 
\begin{itemize}
\item $\{ \beta_{1,1} , \beta_{1,2} ,\dots,\beta_{1,k_1}  \}$ consists of exactly those roots $\beta = \sum_{j\in I} c_j \alpha_j$ with $c_{i_1} >0$, 

\item $\{ \beta_{2,1}, \dots ,\beta_{2,k_2}  \}$ consists of roots $\beta= \sum_{j\in I} c_j \alpha_j$ with $c_{i_1}=0$ and $c_{i_2} >0$, and so on.
\end{itemize}
Consider the corresponding reduced expression, factored as
$w^{(1)} w^{(2)} \cdots w^{(n)}$, where $w^{(j)}$ corresponds to the roots $\beta_{j,1}, \ldots, \beta_{j,k_j}$. 
Then
$\{ \beta_{1,1} , \beta_{1,2} , \dots, \beta_{1,k_1}  \}$ is the subset of $\Phi^+$ sent to negative roots by $(w^{(1)})^{-1}$, so in particular $(w^{(1)})^{-1}$ does not send any simple root other than $\alpha_{i_1}$ to a negative root. This implies that every reduced expression of $(w^{(1)})^{-1}$ has $s_{i_1}$ on the right, so $(w^{(1)})^{-1}$ is a minimal length coset representative in $W/W_{I \backslash \{ i_1 \}}$. Equivalently, $w^{(1)}$ is a minimal length coset representative in $W_{I \backslash \{ i_1 \}}\backslash W$.

For any $w \in W$ longer than $w^{(1)}$, $w^{-1}$ must take some root not in $\{ \beta_{1,1} , \beta_{1,2} ,\dots,\beta_{1,k_1}  \}$ to a negative root, so, since the coefficient of $\alpha_{i_1}$ in that root is $0$, it must take some other simple root to a negative root. This implies that $w$ is not a minimal length representative of its coset in $W_{I \backslash \{ i_1 \}}\backslash W$. Thus $w^{(1)}$ is in fact equal to $\tau^{(1)}$. 

Let $a\colon I \backslash \{ i_1\} \longrightarrow I$ denote the map $(\sigma^I)^{-1}\sigma^{I \backslash \{ i_1\}}$. Since $\tau^{(1)}$ performs the map $a^{-1}$ on roots, $w^{(2)} \cdots w^{(n)}$ must be the reduced expression for the root system $a (I \backslash \{ i_1\})$ corresponding the the convex order
\[
a \beta_{2,1} \prec \cdots \prec a \beta_{1,k_2} \prec \cdots \prec a  \beta_{(n-1), k_{n-1}} \prec  a\beta_{n,1}.
\]
By induction, this factors as $\eta^{(2)} \cdots \eta^{(n)}$ where 
\[
\eta^{(k)}=  a  (\sigma^{I \backslash \{ i_1 \}})^{-1}  \sigma^{I \backslash \{  i_1, \dots,  i_{k-1} \}} w
\]
and $w$ is the minimal length representative of $w_0^{I \backslash \{ i_1, \ldots, i_{k-1} \}}$ in $W_{I\setminus\{i_1,\dots,i_k\}}\backslash W_{I\setminus\{i_1,\dots,i_{k-1}\}}$.
The result follows by substituting $a =(\sigma^I)^{-1}\sigma^{I \backslash \{ i_1\}}$ and simplifying. 
\end{proof}

\begin{Lemma} \label{lem:faci1}
For any enumeration $i_1, \ldots ,i_n$ of $I$, the convex order from Definition \ref{def:lex-orders} and Lemma \ref{lem:lexcon} has the property that, for any rank two sub root system $\Delta_\circ$ with simple roots $\beta$ and $ \alpha_{i_n}$, all the roots except possibly $\alpha_{i_n}$ are adjacent. 
\end{Lemma}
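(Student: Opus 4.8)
The plan is to read both clauses of Definition~\ref{def:lex-orders} off directly for the roots of $\Delta_\circ$, and then to show that any root lying strictly between two of them in $\prec$ is forced into the plane spanned by $\beta$ and $\alpha_{i_n}$. Throughout I take $\Delta_\circ = \Phi \cap \operatorname{span}_{\br}(\beta,\alpha_{i_n})$, the rank two subsystem for which $\beta,\alpha_{i_n}$ are a choice of simple roots, and I read ``adjacent'' as ``consecutive in the order $\prec$.'' Write $\beta = \sum_{j} q_j \alpha_j$ and let $m = \min\{k : q_{i_k} \neq 0\}$ be its leading index; since $\beta$ and $\alpha_{i_n}$ are linearly independent, $\beta$ has a nonzero coefficient on some $\alpha_{i_k}$ with $k \neq n$, so $m < n$. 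The positive roots of $\Delta_\circ$ other than $\alpha_{i_n}$ are exactly those of the form $a\beta + b\alpha_{i_n}$ with $a \geq 1$ and $b \geq 0$; call this set $R$, and note that $\alpha_{i_n}$ is the unique positive root of $\Delta_\circ$ with vanishing $\beta$-coefficient.

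First I would record the effect of the two clauses on $R$. Adding a multiple of $\alpha_{i_n}$ changes only the $i_n$-coordinate, so every $\gamma = a\beta + b\alpha_{i_n} \in R$ has leading index $m < n$, whereas $\alpha_{i_n}$ has leading index $n$; by clause (i) this already places all of $R$ before $\alpha_{i_n}$ and shows that all of $R$ shares the leading index $m$. For the comparison within $R$ I would compute the clause (ii) ratio vector of $\gamma$, namely
\[
\left(\frac{q_{i_{m+1}}}{q_{i_m}},\ \dots,\ \frac{q_{i_{n-1}}}{q_{i_m}},\ \frac{q_{i_n}}{q_{i_m}} + \frac{b}{a\,q_{i_m}}\right).
\]
The crucial observation is that the first $n-1-m$ entries are independent of $(a,b)$ --- they are exactly the ratios coming from $\beta$ --- while the last entry is strictly increasing in $b/a$. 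Thus on $R$ the order $\prec$ is simply the order by $b/a$, and any two elements of $R$ have lex vectors agreeing in positions $m+1,\dots,n-1$.

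Next I would run the adjacency argument by contradiction. Suppose $\delta \in \Phi^+ \setminus \Delta_\circ$ satisfies $\gamma_1 \prec \delta \prec \gamma_2$ with $\gamma_1,\gamma_2 \in R$. By clause (i), $\delta$ must have leading index $m$ as well. Since $\gamma_1$ and $\gamma_2$ have equal lex entries in positions $m+1,\dots,n-1$, the elementary fact that an element lex-between two vectors agreeing on a prefix must itself agree on that prefix forces the lex vector of $\delta$ to match theirs in positions $m+1,\dots,n-1$. Writing $\delta = \sum_j p_j \alpha_j$ and $\lambda = p_{i_m}/q_{i_m} > 0$, this gives $p_{i_k} = \lambda q_{i_k}$ for every $k < n$ (the cases $k < m$ hold because both sides vanish). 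Hence $\delta - \lambda\beta$ is supported only on $\alpha_{i_n}$, so $\delta \in \operatorname{span}_{\br}(\beta,\alpha_{i_n})$ and therefore $\delta \in \Phi \cap \operatorname{span}_{\br}(\beta,\alpha_{i_n}) = \Delta_\circ$, contradicting $\delta \notin \Delta_\circ$. Since no root outside $\Delta_\circ$ can lie between two elements of $R$, and the only other root of $\Delta_\circ$, namely $\alpha_{i_n}$, follows all of $R$, the roots in $R$ occupy a consecutive block of $\prec$, which is the claim.

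I expect the only real content to be the ratio-vector computation of the second paragraph: once one sees that all roots of $R$ share the same prefix of ratios and differ only in the final ($i_n$) coordinate, the rest is the elementary lex-order ``prefix trapping'' together with the identity $\Phi \cap \operatorname{span}_{\br}(\beta,\alpha_{i_n}) = \Delta_\circ$. The step to handle with care is the reduction $p_{i_k} = \lambda q_{i_k}$ for all $k < n$ and the conclusion that $\delta$ lies in the span; this is exactly where the hypothesis that $\alpha_{i_n}$ is the \emph{last} root in the enumeration is used decisively, since it is precisely the coordinate left free by the forced agreement of prefixes.
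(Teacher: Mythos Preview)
Your proof is correct and follows essentially the same approach as the paper, which dispatches the lemma in one line by observing that all roots $a\beta+b\alpha_{i_n}$ with $a\ge 1$ agree in every comparison of Definition~\ref{def:lex-orders} except the very last. You have simply made explicit the prefix--trapping step (that any $\delta$ lex--between two such roots is forced into $\operatorname{span}(\beta,\alpha_{i_n})$) that the paper leaves to the reader.
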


\begin{proof} 
This is immediate from Definition \ref{def:lex-orders}, since all roots in span $\beta, \alpha_n$ other than $\beta$ are equal in all the comparisons but the very last, so they are adjacent.  
\end{proof}

\begin{Definition}
Following Littelmann \cite{Lit98}, $i \in I$ is called \defn{braidless}, if for any $\tau \in W$, the following holds: if $\alpha,\gamma \in \Phi$ are such that $\langle \alpha^\vee, \tau\omega_i \rangle > 0$ and $\langle \gamma^\vee, \tau\omega_i \rangle > 0$, then $\langle \alpha^\vee, \gamma \rangle = 0$. Here $\omega_i$ is the corresponding fundamental weight. 
\end{Definition}

\begin{Definition}
An enumeration $i_1, \ldots, i_n$ of $I$ is called a \defn{good enumeration} if each $i_k$ is braidless for the root system generated by  $\{\alpha_{i_k},\dots,\alpha_{i_n}\}$. 
\end{Definition}

By \cite[Lemma 3.1]{Lit98}, a node of a Dynkin diagram is braidless exactly if the corresponding fundamental weight is either minuscule or co-minuscule (meaning it is minuscule for the Dynkin diagram with arrows reversed), or the Dynkin diagram is rank $2$. Using the classification of minuscule weights in Table \ref{tab:minuscule} we see:

\begin{Proposition}  \label{prop-good-list}
The following is a complete list of good enumerations of $I = \{i_1,\dots,i_n\}$ depending on the underlying type.
\begin{enumerate}
\item If $\mathfrak{g}$ is of type $A_n$, then any order of $I$ is a good enumeration.
\item If $\mathfrak{g}$ if of type $B_n$ or $C_n$, then order $I$ as $1 < 2 < 3 \cdots < k < n < \cdots$ for some $k \geq 0$, where everything after the $n$ is arbitrary. The $k=0$ case means $n$ is first in the order. 
\item If $\mathfrak{g}$ is of type $D_n$, then order $I$ as $1<2<\cdots<k<x<\cdots$ for some $k \geq 0$, where $x=n-1$ or $n$, and everything after $x$ is arbitrary.
\item If $\mathfrak{g}$ is of type $E_6$, then set $i_1$ to be one of the two vertices farthest from the trivalent vertex in the Dynkin diagram, and then follow type $D_5$ rules.
\item If $\mathfrak{g}$ is of type $E_7$, then set $i_1$ to be the vertex farthest from the trivalent node, and then follow type $E_6$ rules. \qed
\end{enumerate}
\end{Proposition}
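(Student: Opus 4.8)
The plan is to run an induction on the rank $n$ built directly on the recursive shape of the definition. I would first note that $i_1,\dots,i_n$ is a good enumeration of $I$ if and only if $i_1$ is braidless for the full root system and $i_2,\dots,i_n$ is a good enumeration of the root system generated by $\{\alpha_{i_2},\dots,\alpha_{i_n}\}$, i.e.\ of the Dynkin diagram with the node $i_1$ deleted. Consequently the whole statement reduces to two purely local computations for each type: (a) the exact set of braidless nodes, and (b) the isomorphism type of the diagram obtained by deleting each braidless node. Because I identify \emph{all} braidless nodes at each stage, the same induction simultaneously proves that the listed enumerations are good and that there are no others, giving the claimed completeness.

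For (a) I would use the criterion quoted from \cite{Lit98} together with Table \ref{tab:minuscule}: a node is braidless exactly when its fundamental weight is minuscule or cominuscule, or the ambient diagram has rank at most $2$. Reading off the table, the braidless nodes are: every node in type $A_n$; the two endpoints $1$ and $n$ in types $B_n$ and $C_n$ (in each case one is minuscule and the other cominuscule, swapping roles under the arrow-reversing duality between $B$ and $C$); the three outer nodes $1$, $n-1$, $n$ in type $D_n$; the two minuscule endpoints (the vertices farthest from the trivalent node) in type $E_6$; and the single minuscule node $7$ in type $E_7$. In passing this explains the absence of $E_8$ and $F_4$ from the list: both have rank $>2$ and no minuscule or cominuscule fundamental weight, hence no braidless node and no good enumeration.

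For (b) I would record the deletions. Removing an endpoint of $A_n$, $B_n$, or $C_n$ leaves $A_{n-1}$, $B_{n-1}$, or $C_{n-1}$; removing node $1$ of $D_n$ leaves $D_{n-1}$, whereas removing a spin node $n-1$ or $n$ collapses the fork and leaves $A_{n-1}$; removing either minuscule endpoint of $E_6$ leaves $D_5$; and removing node $7$ of $E_7$ leaves $E_6$. Feeding these into the induction yields each case: type $A_n$ is immediate since every node is braidless at every stage; in $B_n$ and $C_n$ one is forced at each step to take the current long endpoint or to jump to the node $n$, after which the residual diagram is type $A$ and free, giving the family $1<2<\cdots<k<n<\cdots$ of (ii); type $D_n$ is the analogous recursion where the jump goes to a spin node $x\in\{n-1,n\}$, giving (iii); and (iv), (v) follow by peeling $E_7\to E_6\to D_5$ and invoking the already-established type $D$ answer.

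The one place needing genuine care rather than bookkeeping is confirming that the braidless-node list is complete and is transported correctly through the deletions — in particular that deleting a spin node of $D_n$, the node $7$ of $E_7$, or an endpoint of $E_6$ produces \emph{exactly} the asserted smaller diagram, and handling the degenerate low-rank identifications $D_3\cong A_3$ and $D_2\cong A_1\times A_1$ that arise at the tail of the $D_n$ and exceptional recursions. These are all finite diagram checks, so I expect the main obstacle to be simply tracking the exceptional-type deletions without error; once (a) and (b) are nailed down, the induction assembles the five cases mechanically.
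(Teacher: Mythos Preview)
Your proposal is correct and is exactly the argument the paper has in mind: the paper states the proposition with a bare \qed, preceded only by the remark that a node is braidless iff the corresponding fundamental weight is minuscule or cominuscule (or the rank is $\le 2$) and a pointer to Table~\ref{tab:minuscule}, leaving the reader to run precisely the recursive ``identify braidless nodes, delete, repeat'' induction you describe. Your write-up simply makes explicit the node deletions and low-rank degenerations that the paper suppresses.
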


\begin{table}[t]
\[
\begin{array}{cl}\toprule
A_n & 1,\dots,n \\
B_n & n \\
C_n & 1 \\
D_n & 1,n-1,n \\
E_6 & 1,6 \\
E_7 & 7 \\
E_8,F_4,G_2 & \text{none} \\ \bottomrule
\end{array}
\]
\label{tab:minuscule}
\caption{The indices of minuscule fundamental weights, using the indexing of fundamental weights from \cite{bourbaki}.  This table is from \cite[p.~132]{bourbaki7-9}.}
\end{table}

Braidless nodes are useful for use because of the following property.
\begin{Lemma}[{\cite[Lemma 3.2]{Lit98}}] 
If $i$ is braidless for $I$, then $\tau^i$ has a unique reduced expression, up to $2$-term braid moves. \qed
\end{Lemma}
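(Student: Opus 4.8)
The plan is to show that $\tau^i$ is \emph{fully commutative}, which is precisely the assertion that its reduced expressions form a single equivalence class under $2$-term braid moves. First I would invoke the word property (Matsumoto--Tits): any two reduced expressions of $\tau^i$ are joined by a chain of braid moves. It therefore suffices to prove that \emph{no} reduced expression of $\tau^i$ admits a braid move involving three or more letters, since then every move in such a chain is forced to be a $2$-term move. By Lemma~\ref{Lem:braid_on_roots}(\ref{ht2})--(\ref{ht3}), a $3$- or $4$-term braid move applied to a reduced expression corresponds, in the associated convex order (Theorem~\ref{thm:papi}), to a block of consecutive roots that is exactly the set $\Psi^+$ of positive roots of a rank-two subsystem $\Psi$ of type $A_2$ or $B_2$. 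Because these consecutive roots are all inversions of $\tau^i$, the entire set $\Psi^+$ lies in the inversion set $\mathrm{Inv}(\tau^i)=\{\alpha\in\Phi^+ : (\tau^i)^{-1}\alpha \text{ is a negative root}\}$. So the target reduces to the statement that no rank-two subsystem has all of its positive roots inside $\mathrm{Inv}(\tau^i)$.

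Next I would identify this inversion set. Using $\tau^i=(w_0^{I\backslash\{i\}})^{-1}w_0$ together with the length-additive factorization $w_0=w_0^{I\backslash\{i\}}\tau^i$, the standard decomposition $\mathrm{Inv}(uv)=\mathrm{Inv}(u)\sqcup u\,\mathrm{Inv}(v)$ gives $w_0^{I\backslash\{i\}}\,\mathrm{Inv}(\tau^i)=\Phi^+\setminus\Phi^+_{I\backslash\{i\}}$, the positive roots whose $\alpha_i$-coefficient is nonzero. Since $w_0^{I\backslash\{i\}}$ fixes $\omega_i$ it preserves $\alpha_i$-coefficients, and hence $\mathrm{Inv}(\tau^i)=\{\alpha\in\Phi^+ : \text{the coefficient of }\alpha_i\text{ in }\alpha\text{ is positive}\}$; this is already implicit in the proof of Lemma~\ref{rem:ct}, where the inversion set of $\tau^{(1)}=\tau^{i_1}$ is described in exactly this way.

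Now I would use the characterization quoted from \cite[Lemma~3.1]{Lit98}: a braidless node is minuscule, co-minuscule, or lies in a rank-two diagram. In the \emph{co-minuscule} case the coefficient of $\alpha_i$ in the highest root $\theta$ equals $1$, so every root has $\alpha_i$-coefficient in $\{0,\pm 1\}$. If some $\Psi$ satisfied $\Psi^+\subseteq\mathrm{Inv}(\tau^i)$, then the two simple roots of $\Psi$ would each have $\alpha_i$-coefficient $1$, whence the highest root of $\Psi$ would have $\alpha_i$-coefficient at least $2$, an impossibility. Thus no such $\Psi$ exists and $\tau^i$ is fully commutative. The \emph{minuscule} case reduces to this one by passing to the dual root system $\Phi^\vee$: full commutativity is a property of the Coxeter element $\tau^i$ alone, and $W$, $W_{I\backslash\{i\}}$, $w_0$, and all braid moves agree for $\Phi$ and $\Phi^\vee$, while $\omega_i$ being minuscule for $\Phi$ means $i$ is co-minuscule for $\Phi^\vee$. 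Finally, in the \emph{rank-two} case one computes directly that $\ell(\tau^i)=|\mathrm{Inv}(\tau^i)|=m_{ij}-1$, one less than the length $m_{ij}$ of the unique non-commutation braid relation, which therefore cannot fit inside any reduced word; so $\tau^i$ has a single reduced expression and the claim is trivial.

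The main obstacle is the translation step in the first paragraph: pinning down $\mathrm{Inv}(\tau^i)$ and verifying that a braid move of length $\geq 3$ genuinely forces \emph{all} of $\Psi^+$ into it, so that the purely numerical coefficient bound can be applied. Handling the minuscule case cleanly is the other delicate point; passing to $\Phi^\vee$ avoids a type-by-type check, whereas the alternative --- a direct coroot-pairing computation showing that the highest root of $\Psi$ would pair to at least $2$ with $\omega_i$ and thereby violate minusculity --- also works but is fussier for $B_2$ subsystems.
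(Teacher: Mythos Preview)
The paper does not actually prove this lemma; it is imported from \cite{Lit98} and marked with a terminal \qed. Your argument supplies a correct, self-contained proof.

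Two small technical remarks. First, Theorem~\ref{thm:papi} and Lemma~\ref{Lem:braid_on_roots} are stated in the paper only for $w_0$, whereas you apply them to reduced expressions of $\tau^i$. The extension is routine (the proofs go through with $w_0$ replaced by any $w$ and $\Phi^+$ by $\mathrm{Inv}(w)$), or one can simply complete a reduced word for $\tau^i$ to one for $w_0$ and apply the paper's versions directly; either way, a sentence acknowledging this would be appropriate. Second, the $\alpha_i$-coefficient of a root is its pairing with the fundamental \emph{coweight} $\omega_i^\vee$ rather than with $\omega_i$; since $W_{I\setminus\{i\}}$ fixes both, your conclusion that $w_0^{I\setminus\{i\}}$ preserves $\alpha_i$-coefficients is unaffected.

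With these cosmetic adjustments the proof is complete: you identify $\mathrm{Inv}(\tau^i)$ as the positive roots of positive $\alpha_i$-height, observe that a long braid move in any reduced word for $\tau^i$ would embed an entire rank-two positive system into this set, and then derive a contradiction from the height bound in the co-minuscule case, reduce the minuscule case to it by passing to $\Phi^\vee$, and dispose of the rank-two case by a length count. This is essentially the standard route to full commutativity of minuscule and co-minuscule parabolic quotients.
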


\begin{Lemma}\label{lemma:reverse_Littelmann}
For any good enumeration of $I$, the corresponding convex order from Lemma \ref{def:lex-orders} is simply braided.
\end{Lemma}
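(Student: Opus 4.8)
The plan is to reformulate ``simply braided for $i$'' as a bubbling statement and then induct on the rank using the factorization $\tau^{(1)}\cdots\tau^{(n)}$ from Lemma \ref{rem:ct}. First, observe that by Lemma \ref{Lem:braid_on_roots} every braid move reverses the consecutive block of roots it affects while preserving them as a set; hence a move that is allowed for $i$ in the sense of Definition \ref{def:sa} either transposes $\alpha_i$ with a perpendicular left neighbor (a $2$-term move) or reverses a consecutive rank-$2$ block having $\alpha_i$ at its right end, sending $\alpha_i$ to the left end. Thus $\ii$ is simply braided for $i$ exactly when $\alpha_i$ can be ``bubbled'' to the front of the order using these leftward moves, and the whole content is that, whenever the left neighbor $\gamma$ of $\alpha_i$ is not perpendicular to $\alpha_i$, the positive roots of the rank-$2$ subsystem $\Delta=\mathrm{span}(\gamma,\alpha_i)\cap\Phi$ other than $\alpha_i$ occur consecutively, immediately to the left of $\alpha_i$.

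For the induction, note that in the order of Definition \ref{def:lex-orders} the roots $\beta$ with $p_{i_1}>0$ form an initial segment (the first block), which is the convex order attached to $\tau^{(1)}=\tau^{i_1}$, and the remaining roots form a suffix whose induced order is precisely the Definition \ref{def:lex-orders} order for the good enumeration $i_2,\dots,i_n$ of $I\setminus\{i_1\}$. The index $i_1$ is trivial since $\beta_1=\alpha_{i_1}$. For $i=i_k$ with $k\ge 2$, I would first apply the inductive hypothesis to $I\setminus\{i_1\}$ to bubble $\alpha_{i_k}$ to the front of this suffix; since every such move involves only roots of the parabolic $\Phi_{I\setminus\{i_1\}}$ lying to the right of the first block, the affected positions are consecutive and the moves are allowed in the full word as well. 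This leaves the one genuinely new step: moving $\alpha_{i_k}$ leftward past the entire first block.

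To carry out that step I would use that $i_1$ is braidless (Proposition \ref{prop-good-list}). The first point is a short coefficient computation: if $\alpha_{i_k}\in\Delta^+$ and some root of $\Delta^+$ lies in the first block, then the other $\Delta$-simple root $\delta$ has $p_{i_1}(\delta)>0$, so every root of $\Delta^+\setminus\{\alpha_{i_k}\}$ (which involves $\delta$ with positive coefficient) also lies in the first block. The second point --- that these first-block roots are in fact consecutive --- is the analogue of Lemma \ref{lem:faci1} for the first block and is where braidlessness does the work. In the minuscule (simply-laced) cases one has $p_{i_1}\in\{0,1\}$ on all roots, so the first-block roots of $\Delta$ differ only in their $\alpha_{i_k}$-coefficient and agree in every comparison of Definition \ref{def:lex-orders} except the one at $i_k$; exactly as in the proof of Lemma \ref{lem:faci1}, nothing can lie strictly between them, they are consecutive, and the required $2$- and $3$-term moves (with $\alpha_{i_k}$ rightmost) are available. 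The case $i_k=i_n$ is already covered directly by Lemma \ref{lem:faci1}.

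The main obstacle is the co-minuscule non-simply-laced cases ($B_n$ with $i_1=n$ and $C_n$ with $i_1=1$), where the first block contains roots with $p_{i_1}=2$ and the rank-$2$ subsystems can be of type $B_2$. There the first-block roots of $\Delta$ need not agree in all-but-one coordinate, so consecutiveness is no longer immediate from Definition \ref{def:lex-orders}; I would instead extract it from the ``unique reduced expression up to $2$-term moves'' property of $\tau^{i_1}$ guaranteed by braidlessness, using the $4$-term bracket rule of Lemma \ref{lem:newlfs} and Remark \ref{rem:newlfs} to track the $B_2$ moves. Establishing this co-minuscule consecutiveness, and confirming that the $4$-term moves always have $\alpha_{i_k}$ at the right end, is the step I expect to require the most care.
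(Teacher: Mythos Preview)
Your inductive setup and the reduction to ``bubble $\alpha_{i_k}$ through the first block'' are fine, but two things go wrong at that last step.

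First, your reformulation of Definition \ref{def:sa} is too restrictive: the definition allows \emph{arbitrary} $2$-term braid moves, not only those transposing $\alpha_i$ with its left neighbor. This matters, because the rank-$2$ blocks are generally \emph{not} consecutive in $\prec$ and must be pre-arranged by $2$-term moves elsewhere before the $3$- or $4$-term move with $\alpha_i$ on the right becomes available.

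Second, and concretely, your ``analogue of Lemma \ref{lem:faci1}'' for the first block is false already in the simply-laced minuscule case. Take the paper's $D_4$ example (Example \ref{ex:3en}) with enumeration $i_1=1,\ i_2=3,\ i_3=2,\ i_4=4$ and $i=i_2=3$. The first block in $\prec$ is
\[
1\prec 12\prec 124\prec 123\prec 1234\prec 12234.
\]
The rank-$2$ system with simple roots $124$ and $\alpha_3$ has first-block roots $\{124,1234\}$; they do differ only in the $\alpha_3$-coordinate, but $123$ sits strictly between them. The argument of Lemma \ref{lem:faci1} works because the distinguished coordinate is the \emph{last} slot of the tuple, so that two tuples agreeing except in the last slot are automatically adjacent in lexicographic order. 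When $k<n$ the $i_k$-slot is not last, and adjacency simply fails. So the case you flag as ``the main obstacle'' (co-minuscule, non-simply-laced) is not where your argument first breaks.

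The paper supplies exactly the idea you are missing. It does not try to prove consecutiveness in $\prec$. Instead it introduces the auxiliary enumeration with $i_k$ moved to the \emph{end}, so that Lemma \ref{lem:faci1} applies verbatim to the resulting order $\prec'$, and forms the hybrid order $\prec''$ via Lemma \ref{lem:combine-orders}. The key point is that $\prec$ and $\prec''$ both yield reduced words factoring as the same product $\tau^{(1)}\cdots\tau^{(k-1)}$ on the initial segment $X$, and braidlessness of each $i_j$ forces any two reduced expressions for a given $\tau^{(j)}$ to differ only by $2$-term moves. Hence $\prec$ and $\prec''$ differ only by $2$-term moves, and $\prec''$ is simply braided for $i$ because in $\prec'$ the rank-$2$ blocks really are consecutive. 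Your proposal invokes the word ``braidless'' but never actually cashes it in this way; that is the missing mechanism.
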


\begin{proof} 
Fix $i$, and let $k$ be such that $i_k=i$. Recall from Lemma \ref{rem:ct} that the corresponding reduced expression factors as $\tau^{(1)} \tau^{(2)} \cdots \tau^{(n)}$. Let $X$ be the set of roots corresponding to $\tau^{(1)} \cdots \tau^{(k-1)}$. Consider two convex orders defined using Lemma \ref{lem:lexcon}:
\begin{itemize}
\item $\prec$ defined using the order $i_1, i_2 \ldots, i_n $, and
\item $\prec'$ defined by  $i_1, \dots, i_{k-1}, i_{k+1} , \dots, i_n,  i_k$.
\end{itemize}
Consider the
hybrid order $\prec''$ where $X$ is ordered according to $\prec'$, and the rest of the roots are ordered according to $\prec$. By Lemma \ref{lem:combine-orders} this remains convex. Then:
\begin{enumerate}

\item $\prec$ and $\prec''$ both factor as a product of $\tau^{(j)}$ in the same order, so, since we are working with a good enumeration, differ by $2$-term braid moves;

\item the first root coming from $\tau^{(k)}$ in $\prec$ is $\alpha_i$;

\item  by Lemma \ref{lem:faci1}, all roots $\beta$ that are $\prec'' \alpha_i$ which satisfy $( \beta | \alpha_i ) <0$ have the property that the roots in the rank $2$ root system including $\beta$ and $\alpha_i$ are adjacent according to $\prec''$. From this it follows that $\prec''$ is simply braided for $i$. 
\end{enumerate}
Together these certainly imply that $\prec$ is simply braided for $i$.
See Example \ref{ex:3en}.
\end{proof}

\begin{Example} \label{ex:3en}
Consider type $D_4$ and the good enumeration $i_1=1$, $i_2=3$, $i_3=2$, and $i_4=4$. We illustrate the proof that the convex order from Lemma \ref{def:lex-orders} is simply braided for node $3$. The convex orders considered are
\begin{gather*}
1\prec12\prec124\prec123\prec1234\prec12234\prec3\prec23\prec234\prec2\prec24\prec4 \\
1\prec'12\prec'123\prec'124\prec'1234\prec'12234\prec'2\prec'23\prec'24\prec'234\prec'4\prec'3,
\end{gather*}
and the hybrid order
\[
1\prec''12\prec''123\prec''124\prec''1234\prec''12234
\prec''3\prec''23\prec''234\prec''2\prec''24\prec''4
\]
which follows $\prec'$ up to $12234$, then follows $\prec$. The order $\prec'$ is simply braided for $3$ by Lemma \ref{lem:faci1}, so $\prec''$ is simply braided for $3$ as well because it is the same order as $\prec'$ to the left of the root $3$.   
Both $\prec$ and $\prec''$ factor as a product of $\tau^{(j)}$ in same order, so, since we are using a good enumeration, they differ only by $2$-term braid moves, and hence $\prec$ is simply braided for 3 by definition. This last step fails for other enumerations. 
\end{Example}

\subsection{Bracketing rules in rank 2} \label{ss:brrank2}
We will use the notation $f_i(\cc)_\beta$ to mean the $\beta$-component of $f_i(\cc)$, where $\cc$ is an $\ii$-Lusztig datum for some $\ii \in R(w_0)$ and $\beta$ is a positive root.

\begin{Lemma}
\label{Lem:bracket2}
Let $\mathfrak{g}=\mathfrak{sl}_3$ and set $\ii = (1,2,1)$.  For any $\ii$-Lusztig datum $\cc = (c_{\alpha_1},c_{\alpha_1+\alpha_2},c_{\alpha_2})$, $f_2(\cc)$ can be calculated as follows. Make the string of brackets
\[
S_2(\cc)= \quad
\underbrace{)\cdots)}_{c_{\alpha_1+\alpha_2}}\ \
\underbrace{(\cdots(}_{c_{\alpha_{1}}}\ \
\underbrace{)\cdots)}_{c_{\alpha_{2}}}.
\]
Then $\cc$ and $f_2(\cc)$ differ as follows.
\begin{itemize}
\item If there is an uncanceled ``$($", then $f_2(\cc)_{\alpha_1+\alpha_2}= c_{\alpha_1+\alpha_2}+1$ and
$f_2(\cc)_{\alpha_1}= c_{\alpha_1}-1$.
 \item If there is no uncanceled ``$($", then $f_2(\cc)_{\alpha_2}= c_{\alpha_2}+1$.
\end{itemize}
\end{Lemma}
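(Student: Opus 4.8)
The plan is to verify Lemma \ref{Lem:bracket2} directly from the rank-two change-of-basis formulas of Lemma \ref{Lem:moves}(2), which describe how the Lusztig data transform under the unique braid move for $\mathfrak{sl}_3$. Since $\ii=(1,2,1)$ has $\alpha_1$ first, applying $f_2$ is not immediate, so the natural strategy is to pass to the other reduced expression $\ii'=(2,1,2)$, where $\alpha_2$ becomes the rightmost root and $f_2=f_2$ can be computed by the simple rule of Proposition \ref{prop:crystal_op_PBW}, and then translate back. In other words, I would first compute $\cc' = \cc^{\ii'}(b)$ from $\cc$ using Lemma \ref{Lem:moves}(2), then apply $f_2$ on the $\ii'$ side (which just increments one coordinate by the simple rule), then apply Lemma \ref{Lem:moves}(2) again to return to the $\ii$ side, and finally check that the net effect matches the stated bracketing rule.

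\textbf{Key steps.}
First I would record the piecewise-linear formulas of Lemma \ref{Lem:moves}(2) and their inverses explicitly in the two cases that the bracketing rule distinguishes. Second, I would translate the combinatorial quantities in the bracketing string $S_2(\cc)$ into the integers appearing in those formulas: the string has $c_{\alpha_1+\alpha_2}$ right brackets, then $c_{\alpha_1}$ left brackets, then $c_{\alpha_2}$ right brackets, and after cancellation the existence of an uncanceled ``$($'' is exactly the condition $c_{\alpha_1} > \max\{0,\, c_{\alpha_2}-c_{\alpha_1+\alpha_2}\}$, equivalently $c_{\alpha_1}+c_{\alpha_1+\alpha_2} > c_{\alpha_2}$. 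I would then split into the two cases. In the case with an uncanceled ``$($'' (so $c_{\alpha_1}+c_{\alpha_1+\alpha_2}>c_{\alpha_2}$), I would check that incrementing the $\alpha_2$-slot on the $\ii'$ side and transporting back via the $\min/\max$ formulas yields precisely $c_{\alpha_1+\alpha_2}\mapsto c_{\alpha_1+\alpha_2}+1$ and $c_{\alpha_1}\mapsto c_{\alpha_1}-1$ with all other coordinates fixed. In the complementary case (no uncanceled ``$($'', i.e.\ $c_{\alpha_1}+c_{\alpha_1+\alpha_2}\le c_{\alpha_2}$), the same computation should yield $c_{\alpha_2}\mapsto c_{\alpha_2}+1$ with the other two coordinates unchanged.

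\textbf{Main obstacle.}
The genuine content is entirely in verifying that the piecewise-linear maps agree with the bracketing rule across the case boundary, so the main obstacle is bookkeeping with the nested $\max$ and $\min$: one must confirm that incrementing a coordinate on the $\ii'$ side and conjugating by the (non-linear) braid map lands on the correct branch of the piecewise-linear transformation, and that the two branches are separated by exactly the inequality detected by the bracketing. I would handle this by noting that the quantities $c_{\alpha_1+\alpha_2}' = \min\{c_{\alpha_1},c_{\alpha_2}\}$ and the ``jump'' direction are controlled by the sign of $c_{\alpha_1}+c_{\alpha_1+\alpha_2}-c_{\alpha_2}$, which is precisely what governs bracket cancellation; this sign is stable under the increment in each regime, so the branch is determined before and after applying $f_2$. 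A clean alternative, which I would mention as a shortcut, is to observe that $S_2(\cc)$ is the standard Kashiwara $\varepsilon_2$-bracketing for the $\mathfrak{sl}_2$-string generated by $f_2$ acting on the subword in roots $\alpha_1+\alpha_2$ and $\alpha_1$ (with $c_{\alpha_2}$ recording the already-applied $f_2$'s), so the lemma is just the statement that $f_2$ follows the bracketing rule for this string; invoking the general rank-two crystal structure then reduces the verification to a weight count, exactly as in the proof of Lemma \ref{lem:newlfs}.
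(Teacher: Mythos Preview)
Your overall strategy is exactly the paper's: pass to $\ii'=(2,1,2)$ via Lemma~\ref{Lem:moves}, apply $f_2$ there using Proposition~\ref{prop:crystal_op_PBW}, and transform back. However, there is a concrete error in your translation of the bracketing condition into an inequality, and this would make the case split (and hence the verification) fail.

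In the string
\[
S_2(\cc)=\;\underbrace{)\cdots)}_{c_{\alpha_1+\alpha_2}}\ \underbrace{(\cdots(}_{c_{\alpha_1}}\ \underbrace{)\cdots)}_{c_{\alpha_2}},
\]
the leading block of $c_{\alpha_1+\alpha_2}$ right brackets never cancels anything: there is no ``$($'' to their left. Only the middle block of ``$($'''s cancels against the trailing ``$)$'''s. Hence there is an uncanceled ``$($'' precisely when $c_{\alpha_1}>c_{\alpha_2}$, which is the condition the paper uses. Your condition $c_{\alpha_1}+c_{\alpha_1+\alpha_2}>c_{\alpha_2}$ is not equivalent to this; for instance $(c_{\alpha_1},c_{\alpha_1+\alpha_2},c_{\alpha_2})=(1,5,3)$ satisfies your inequality but has no uncanceled ``$($'', and the braid computation indeed gives $f_2(\cc)=(1,5,4)$, not $(0,6,3)$. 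If you carry out the conjugation with your case boundary, the piecewise-linear branches will not line up and the check will fail in this intermediate regime.

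A minor slip as well: for $\ii'=(2,1,2)$ the root $\alpha_2$ is the \emph{leftmost} (first) root in the convex order, not the rightmost; Proposition~\ref{prop:crystal_op_PBW}(i) is the relevant part (if it were rightmost you would be computing $f_2^*$). With these two corrections your sketch becomes the paper's proof verbatim.
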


\begin{proof}
Let $\cc' = (c_{\alpha_2}', c_{\alpha_1+2\alpha_2}', c_{\alpha_1}') = R_{121}^{212}(\cc)$, where $R_\ii^{\ii'}$ is the transition map from Lemma \ref{Lem:moves}. If there is an uncanceled left bracket, then $c_{\alpha_1} > c_{\alpha_2}$.  Hence 
$
\cc' = (c_{\alpha_1+\alpha_2} ,c_{\alpha_2},c_{\alpha_1}+c_{\alpha_1+\alpha_2}-c_{\alpha_2}).
$  
Using Proposition \ref{prop:crystal_op_PBW},
\[
f_2(\cc') = (c_{\alpha_1+\alpha_2}+1 ,c_{\alpha_2},c_{\alpha_1}+c_{\alpha_1+\alpha_2}-c_{\alpha_2}).
\]
Applying the inverse to $R_{121}^{212}$ gives
$
f_2(\cc) = (c_{\alpha_1}-1, c_{\alpha_1+\alpha_2}+1, c_{\alpha_2}),
$
as required.  

If there is no uncanceled left bracket, then $c_{\alpha_1} \le c_{\alpha_2}$, so
$
\cc' = (c_{\alpha_1+\alpha_2} + c_{\alpha_2} - c_{\alpha_1}, c_{\alpha_1} , c_{\alpha_1+\alpha_2} ).
$
Using Proposition \ref{prop:crystal_op_PBW}, 
\[
f_2(\cc') = (c_{\alpha_1+\alpha_2} + c_{\alpha_2} - c_{\alpha_1} + 1, c_{\alpha_1} , c_{\alpha_1+\alpha_2} ).
\]
Applying the inverse to $R_{121}^{212}$ gives
$
f_2(\cc) = (c_{\alpha_1}, c_{\alpha_1+\alpha_2}, c_{\alpha_2}+1),
$
as required.
\end{proof}

\begin{Lemma} \label{lem:bmb1}
Let $\mathfrak{g}$ be of type $B_2$ where $\alpha_2$ is the short root and set $\ii = (1,2,1,2)$.  For any $\ii$-Lusztig datum $\cc = (c_{\alpha_1},c_{\alpha_1+\alpha_2},c_{\alpha_1+2\alpha_2},c_{\alpha_2})$, $f_2(\cc)$ can be calculated as follows.  Make the string of brackets
\[
S_2(\cc)= \quad
\underbrace{)\cdots)}_{c_{\alpha_1+\alpha_2}}\ \
\underbrace{(\cdots(}_{2c_{\alpha_{1}}}\ \
\underbrace{)\cdots)}_{2c_{\alpha_{1}+2\alpha_2}}\ \
\underbrace{(\cdots(}_{c_{\alpha_{1}+\alpha_2}}\ \
\underbrace{)\cdots)}_{c_{\alpha_2}}.
\]
Then $\cc$ and $f_2(\cc)$ differ as follows.
\begin{enumerate}
\item \label{eac1} If the leftmost uncanceled ``$($" corresponds to $\alpha_1$, then 
$f_2(\cc)_{\alpha_1} = c_{\alpha_1}-1$ and 
$f_2(\cc)_{\alpha_1+\alpha_2} = c_{\alpha_1+\alpha_2}+1$.
\item \label{eac2} If the leftmost uncanceled ``$($" corresponds to $\alpha_1+\alpha_2$, then $f_2(\cc)_{\alpha_1+\alpha_2} = c_{\alpha_1+\alpha_2}-1$ and
$f_2(\cc)_{\alpha_1+2\alpha_2} = c_{\alpha_1+2\alpha_2}+1$.
\item \label{eac3} If there is no uncanceled ``$($", then $f_2(\cc)_{\alpha_2} = c_{\alpha_2}+1$.
\end{enumerate}
\end{Lemma}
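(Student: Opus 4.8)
The plan is to mimic the proof of Lemma \ref{Lem:bracket2}: pass to the other reduced expression, where $\alpha_2$ becomes the first root and $f_2$ acts trivially on the Lusztig datum, and then transport the computation back across the $B_2$ braid move. First I would record the two convex orders. For $\ii=(1,2,1,2)$, Theorem \ref{thm:papi} gives $\alpha_1 \prec \alpha_1+\alpha_2 \prec \alpha_1+2\alpha_2 \prec \alpha_2$, which is exactly the coordinate order of $\cc$; for the other reduced expression $\ii'=(2,1,2,1)$ it gives $\alpha_2 \prec' \alpha_1+2\alpha_2 \prec' \alpha_1+\alpha_2 \prec' \alpha_1$, so that $\alpha_2$ is the first root of $\ii'$.

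The computation then proceeds in three steps. First I would use the algebraic transition formulas \eqref{leq-alg} of Lemma \ref{lem:newlfs} to express the $\ii'$-datum $\cc'$ in terms of $\cc$. Second, since $\alpha_2$ is the first root of $\ii'$, Proposition \ref{prop:crystal_op_PBW} shows that $f_2$ simply increases the $\alpha_2$-coordinate of $\cc'$ by one. Third, I would apply the inverse of this transition---which, the $B_2$ braid move being an involution, is again given by the rule \eqref{leq-alg}---to the incremented datum, thereby recovering $f_2(\cc)$ in the $\ii$-coordinates.

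The heart of the argument is a case analysis showing that the three regimes in the statement correspond exactly to the three branches produced by composing these piecewise-linear maps. Concretely, I would translate the bracket cancellation of $S_2(\cc)$ into inequalities among $c_{\alpha_1}, c_{\alpha_1+\alpha_2}, c_{\alpha_1+2\alpha_2}, c_{\alpha_2}$: the leftmost uncanceled ``$($'' should land in the $\alpha_1$-block, in the $(\alpha_1+\alpha_2)$-block, or be absent precisely according to which terms attain the various inner and outer $\min$'s and $\max$'s in \eqref{leq-alg}, and correspondingly the incremented $\alpha_2$-coordinate should be pushed back onto $\alpha_1$, onto $\alpha_1+\alpha_2$, or onto $\alpha_2$ itself. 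A useful bookkeeping check along the way is that the claimed output has the correct weight, exactly as in the proof of Lemma \ref{lem:newlfs}, which pins down the components left fixed by $f_2$.

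The main obstacle is this case analysis. Unlike the $\mathfrak{sl}_3$ situation, where a single $\max$ yields only two cases, the $B_2$ transition is a nested $\min$/$\max$ producing three cases, and the bracket multiplicities are doubled on the $\alpha_1$- and $(\alpha_1+2\alpha_2)$-blocks because $\langle \alpha_2^\vee,\alpha_1\rangle=-2$ and $\langle \alpha_2^\vee,\alpha_1+2\alpha_2\rangle=2$. Carefully matching these doubled counts against the coefficients appearing in \eqref{leq-alg}, and verifying that the increment migrates to exactly the root flagged by the leftmost uncanceled bracket, is the delicate point; the remainder is routine tropical arithmetic.
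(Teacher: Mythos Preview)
Your approach is correct and is essentially the paper's: conjugate by the $B_2$ braid-move transition so that $f_2$ acts by incrementing the $\alpha_2$-coordinate, and transport the result back. The paper's execution differs in two small but useful ways. First, it uses the bracketing description \eqref{leq-bra} of the transition rather than the algebraic formulas \eqref{leq-alg}, so the case analysis stays visual (one just tracks which brackets in $T(\cc)$ are added or removed). Second, rather than computing $\cc'$, incrementing, and then applying the inverse map, the paper lets $\overline{\cc}$ be the \emph{claimed} value of $f_2(\cc)$ in each case, applies the transition to both $\cc$ and $\overline{\cc}$, and checks that the images differ only by $+1$ in the $\alpha_2$-coordinate; this sidesteps the backward computation entirely.
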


\begin{proof}
Let $\overline\cc$ denote the Lusztig data proposed for $f_2(\cc)$. Let $\cc'$ be the Lusztig data obtained by performing a braid move on $\cc$, and $\overline \cc'$ the data obtained by performing a braid move on $\overline \cc$. It suffices to show that $\cc'$ and $\overline \cc'$ differ only by $\overline c'_{\alpha_2}= c'_{\alpha_2}+1$. We compare the strings of brackets for $T(\cc)$ and $T(\bar \cc)$ from Lemma \ref{lem:newlfs}. The cases are broken down according to the itemization of the statement.  

In case (\ref{eac1}), 
\[
T(\overline \cc)= 
\underbrace{)\cdots) {\color{blue} )}}_{c_{\alpha_1+\alpha_2}+1}\ \
%{\color{red} \Big (}
\underbrace{\Big (\cdots\Big ( }_{c_{\alpha_1}-1}\ \
\underbrace{\Big )\cdots \Big )}_{c_{\alpha_1+2 \alpha_2}}\ \
\underbrace{{\color{blue} (} (\cdots(}_{c_{\alpha_1+\alpha_2}+1}\ \
\underbrace{)\cdots)}_{c_{\alpha_2}}\ \
\underbrace{{\Big (\cdots \Big (}}_{c_{\alpha_1+2 \alpha_2}}.
\]
Here one bracket corresponding to $\alpha_1$ has been removed and one of each type of bracket corresponding to $\alpha_1+\alpha_2$ have been added. 
The condition that the leftmost uncanceled ``$($'' in $S_2(\cc)$ corresponds to $\alpha_1$ implies one of the following.
\begin{itemize}
\item There is a pair $\Big ()$ in $T(\cc)$.  Then $T(\overline \cc)$ has one less $\Big ( )$, one more $()$, and one more $)$. 

\item The leftmost uncanceled left bracket in $T(\cc)$ comes from $\alpha_1$ and there is no $\Big ()$. Then $T(\overline \cc)$ has one less uncanceled $\Big ($ and one more uncanceled $($ and $)$. The new $($ is uncanceled since otherwise there would have been a $\Big ( )$. 
\end{itemize}

In case (\ref{eac2}), 
\[
T(\overline \cc)=
\underbrace{)\cdots) %{\color{red} )}
}_{c_{\alpha_1+\alpha_2 }-1}\ \
\underbrace{\Big (\cdots\Big ( }_{c_{\alpha_1}} \ \
\underbrace{\Big )\cdots \Big ){\color{blue} \Big )}}_{c_{\alpha_1+2 \alpha_2}+1}\ \
\underbrace{ %{\color{red} (}
(\cdots(}_{c_{\alpha_1+\alpha_2}-1}\ \
\underbrace{)\cdots)}_{c_{\alpha_2}}\ \
\underbrace{{\color{blue} \Big (} \Big (\cdots \Big (}_{c_{\alpha_1+2 \alpha_2}+1}.
\]
Comparing to $T(\cc)$, there is one less uncanceled (, one less uncanceled ), one more uncanceled $\Big )$, and one more uncanceled $\Big ($.

In case (\ref{eac3}), 
\[
T(\overline \cc)=
\underbrace{)\cdots) }_{c_{\alpha_1+\alpha_2}}\ \
\underbrace{ \Big (\cdots\Big ( }_{c_{\alpha_1}}\ \
\underbrace{\Big )\cdots \Big )}_{c_{\alpha_1+2 \alpha_2}}\ \
\underbrace{ (\cdots(}_{c_{\alpha_1+\alpha_2}}\ \
\underbrace{)\cdots) {\color{blue} )}}_{c_{\alpha_2}+1}\ \
\underbrace{ \Big (\cdots \Big (}_{c_{\alpha_1+2 \alpha_2}}.
\]
There is one new uncanceled $)$.

In each case, it is easy to see using Lemma \ref{lem:newlfs} that $\overline \cc'$ is as desired. 
\end{proof}

\begin{Lemma} \label{lem:bmb2}
Let $\mathfrak{g}$ be of type $C_2$ where $\alpha_2$ is the long root and set $\ii = (1,2,1,2)$.  For any $\ii$-Lusztig datum $\cc = (c_{\alpha_1},c_{2\alpha_1+\alpha_2},c_{\alpha_1+\alpha_2},c_{\alpha_2})$, $f_2(\cc)$ can be calculated as follows.  Make the string of brackets
\[
S_2(\cc)= \quad
\underbrace{)\cdots)}_{c_{2\alpha_1+\alpha_2}}\ \
\underbrace{(\cdots(}_{c_{\alpha_{1}}}\ \
\underbrace{)\cdots)}_{c_{\alpha_{1}+\alpha_2}}\ \
\underbrace{(\cdots(}_{c_{2\alpha_{1}+\alpha_2}}\ \
\underbrace{)\cdots)}_{c_{\alpha_2}}.
\]
Then $\cc$ and $f_2(\cc)$ differ as follows.
\begin{enumerate}
\item \label{hgf1} If the leftmost uncanceled ``$($" corresponds to $\alpha_1$ and $c_{\alpha_1+\alpha_2}= c_{\alpha_1}-1$, then $f_2(\cc)_{\alpha_1}= c_{\alpha_1} - 1 \text{ and }
f_2(\cc)_{\alpha_1+\alpha_2} = c_{\alpha_1+\alpha_2}+1.$ 

\item \label{hgf2} If the leftmost uncanceled ``$($" corresponds to $\alpha_1$ and $c_{\alpha_1+\alpha_2}< c_{\alpha_1}-1$, 
 then 
$f_2(\cc)_{\alpha_1} =c_{\alpha_1}-2 \text{ and } f_2(\cc)_{2\alpha_1+\alpha_2} = c_{2\alpha_1+\alpha_2}+1.$
\item \label{hgf3} If the leftmost uncanceled ``$($" corresponds to $2\alpha_1+\alpha_2$, then $f_2(\cc)_{2\alpha_1+\alpha_2} = c_{\alpha_1+\alpha_2}-1$ and
$f_2(\cc)_{\alpha_1+\alpha_2} = c_{\alpha_1+2\alpha_2}+2$.
\item \label{hgf4} If there is no uncanceled ``$($", then $f_2(\cc)_{\alpha_2} = c_{\alpha_2}+1$.
\end{enumerate}
\end{Lemma}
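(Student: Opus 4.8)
The plan is to mirror the proof of Lemma~\ref{lem:bmb1}: transfer the computation to the other reduced expression, where $f_2$ acts simply as in the first part of Proposition~\ref{prop:crystal_op_PBW}, and then transport the result back with the rank-two transition rule. For $\ii=(1,2,1,2)$ the convex order from Theorem~\ref{thm:papi} is $\alpha_1 \prec 2\alpha_1+\alpha_2 \prec \alpha_1+\alpha_2 \prec \alpha_2$, matching the coordinate order of $\cc$, whereas for the other reduced expression $\ii'=(2,1,2,1)$ it is $\alpha_2 \prec' \alpha_1+\alpha_2 \prec' 2\alpha_1+\alpha_2 \prec' \alpha_1$. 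Since $\alpha_2$ is the first root of $\ii'$, the operator $f_2$ acts on an $\ii'$-Lusztig datum simply by adding $1$ to its $\alpha_2$-component.

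Let $\overline\cc$ denote the datum proposed for $f_2(\cc)$ in the four cases, and let $\cc'$ and $\overline\cc'$ be obtained from $\cc$ and $\overline\cc$ by the $4$-term braid move $\ii\to\ii'$, computed via the $C_2$ transition rule of Remark~\ref{rem:newlfs}. Because $f_2$ is simple in $\ii'$-coordinates, it suffices to show that in each case $\overline\cc'$ agrees with $\cc'$ except that its $\alpha_2$-component is larger by $1$. Writing $T(\cc)$ for the transition string of Remark~\ref{rem:newlfs}, whose output $c'_{\alpha_2}$ counts uncanceled left brackets, this amounts to checking that $T(\overline\cc)$ has exactly one more uncanceled left bracket than $T(\cc)$, while the canceled and uncanceled counts feeding the other three formulas of Remark~\ref{rem:newlfs} are unchanged.

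The heart of the proof is the case analysis, which I would organize exactly as in Lemma~\ref{lem:bmb1}. First I would express each hypothesis on $S_2(\cc)$ --- the position of the leftmost uncanceled ``$($'' among the $\alpha_1$- and $(2\alpha_1+\alpha_2)$-blocks, together with the inequality $c_{\alpha_1+\alpha_2}=c_{\alpha_1}-1$ versus $c_{\alpha_1+\alpha_2}<c_{\alpha_1}-1$ separating cases \eqref{hgf1} and \eqref{hgf2} --- as inequalities among the four multiplicities, and read off the resulting cancellation pattern of the mixed large/small string $T(\cc)$. Then, for each case, I would record how the prescribed change from $\cc$ to $\overline\cc$ alters the blocks of $T$ (for example, in case \eqref{hgf2} the $\alpha_1$-block loses two small ``$)$'' while the $(2\alpha_1+\alpha_2)$-block gains one large ``$)$'' and one large ``$($''), and verify from the four formulas of Remark~\ref{rem:newlfs} that the effect is precisely to raise $c'_{\alpha_2}$ by one and fix $c'_{\alpha_1}$, $c'_{2\alpha_1+\alpha_2}$, and $c'_{\alpha_1+\alpha_2}$. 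As a consistency check, each prescribed change lowers $\wt$ by exactly $\alpha_2$, as it must.

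I expect the main obstacle to be the bookkeeping in cases \eqref{hgf2} and \eqref{hgf3}, where the long-root operator $f_2$ moves a short-root multiplicity ($c_{\alpha_1}$ or $c_{\alpha_1+\alpha_2}$) by $2$ rather than $1$. Here the single-bracket cancellation defining the case in $S_2(\cc)$ must be reconciled with the large/small cancellation of Remark~\ref{rem:newlfs}, in which one ``$\Big($'' may absorb one or two ``$)$''; in particular one must confirm that the inequality distinguishing \eqref{hgf1} from \eqref{hgf2} is exactly what decides whether the compensating increase lands on $\alpha_1+\alpha_2$ or on $2\alpha_1+\alpha_2$. A more mechanical alternative to the bracket comparison would be to work with the explicit piecewise-linear ($\max$/$\min$) transition formulas for this $C_2$ pair, obtained from \eqref{leq-alg} by the interchange of $\alpha_1$ and $\alpha_2$ described in Remark~\ref{rem:newlfs}, and to resolve the $\max$/$\min$ using the case hypotheses.
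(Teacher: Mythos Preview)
Your proposal is correct and follows essentially the same route as the paper: define $\overline\cc$, pass to $\ii'$-coordinates via the $C_2$ transition string $T$ of Remark~\ref{rem:newlfs}, and verify in each case that $T(\overline\cc)$ differs from $T(\cc)$ by exactly one extra uncanceled left bracket while the counts feeding $c'_{\alpha_1}$, $c'_{2\alpha_1+\alpha_2}$, $c'_{\alpha_1+\alpha_2}$ are unchanged. The paper's write-up matches your outline, including the sub-split of case~\eqref{hgf2} (there according to whether $c_{2\alpha_1+\alpha_2}\ge c_{\alpha_2}$), and your description of the block changes in that case is exactly what appears.
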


\begin{proof} 
Let $\overline \cc$ denote the Lusztig data proposed for $f_2(\cc)$, and $\cc'$, $\overline \cc'$ the data obtained by performing a braid move on $\cc$ and $\overline \cc$ respectively. It suffices to show that $\cc'$ and $\overline \cc'$ differ only by $\overline c'_{\alpha_2}= c'_{\alpha_2}+1$. 
Consider the string of brackets for $T(\cc)$ from Lemma \ref{lem:newlfs}, where, since here $\alpha_1$ is the long root, the roles of the two roots are interchanged. The roots in $T(\cc)$ come in the reverse order of those in $S_2(\cc)$, so the conditions on the left brackets in $S_2(\cc)$ give information about right brackets in $T(\cc)$.

In case (\ref{hgf1}),
\[
 T(\overline \cc)=
\underbrace{)\cdots){\color{blue} )}}_{c_{\alpha_2+\alpha_1}+1}\ \
\underbrace{\Big (\cdots\Big ( }_{c_{\alpha_2}}\ \
\underbrace{\Big )\cdots \Big )}_{c_{\alpha_2+2 \alpha_1}}\ \
\underbrace{{\color{blue} (} (\cdots(}_{c_{\alpha_2+\alpha_1}+1}\ \
\underbrace{)\cdots)%{\color{red} )}
}_{c_{\alpha_1}-1}\ \
\underbrace{\Big (\cdots \Big (}_{c_{\alpha_2+2 \alpha_1}}.
\]
The conditions imply that all of the brackets added and removed are uncanceled, so in total $T(\bar \cc)$ has one more $($ than $T(\cc)$.

In case (\ref{hgf2}),
\[
T(\overline \cc)=
\underbrace{)\cdots)}_{c_{\alpha_2+\alpha_1}}\ \
\underbrace{\Big (\cdots\Big ( }_{c_{\alpha_2}}\ \
\underbrace{\Big )\cdots \Big ){\color{blue} \Big )}}_{c_{\alpha_2+2 \alpha_1}+1}\ \
\underbrace{(\cdots(}_{c_{\alpha_2+\alpha_1}}\ \
\underbrace{)\cdots)%{\color{red} ))}
}_{c_{\alpha_1}-2}\ \
\underbrace{ {\color{blue} \Big (} \Big(\cdots \Big (}_{c_{\alpha_2+2 \alpha_1}+1}.
\]
This breaks into two cases:
\begin{itemize}

\item $c_{\alpha_2+2\alpha_1} \geq c_{\alpha_2}$, so there are no uncanceled $\Big)$ nor $\Big( ))$. Then there are two less $)$, one more $\Big )$, and one more $\Big ($. 

\item $c_{\alpha_2+2\alpha_1} < c_{\alpha_2}$. Since there is an uncanceled right bracket corresponding to $\alpha_1$, $T(\cc)$ has a $\Big( ))$. So there are one more $\Big ( \Big )$ and $\Big ($ and one less $\Big( ))$. 

\end{itemize}

In case (\ref{hgf3}),
\[
 T(\overline \cc)=
\underbrace{)\cdots){\color{blue} ))}}_{c_{\alpha_2+\alpha_1}+2}\ \
\underbrace{\Big (\cdots\Big ( }_{c_{\alpha_2}}\ \
\underbrace{\Big )\cdots \Big ) %{\color{red} \Big )}
}_{c_{\alpha_2+2 \alpha_1}-1}\ \
\underbrace{{\color{blue} ((} (\cdots(}_{c_{\alpha_2+\alpha_1}+2}\ \
\underbrace{)\cdots)}_{c_{\alpha_1}}\ \
\underbrace{ %{\color{red} \Big (} 
\Big (\cdots \Big (}_{c_{\alpha_2+2 \alpha_1}-1}.
\]
There are two more uncanceled ) and (, and one less uncanceled $\Big )$ and ${\Big (}$.  

For case (\ref{hgf4}),
\[
 T(\overline \cc)=
\underbrace{)\cdots)}_{c_{\alpha_2+\alpha_1}}\ \
\underbrace{{\color{blue} \Big (} \Big (\cdots\Big ( }_{c_{\alpha_2}+1}\ \
\underbrace{\Big )\cdots \Big )}_{c_{\alpha_2+2 \alpha_1}}\ \
\underbrace{ (\cdots(}_{c_{\alpha_2+\alpha_1}}\ \
\underbrace{)\cdots)}_{c_{\alpha_1}}\ \
\underbrace{\Big (\cdots \Big (}_{c_{\alpha_2+2 \alpha_1}}.
\]
The string $T(\cc)$ has no uncanceled $)$ and no $\Big ( ))$, so the new $\Big( $ is uncanceled.

In each case, it is easy to see using Lemma \ref{lem:newlfs} that $\overline \cc'$ is as desired. 
\end{proof}

\begin{Remark}
The reader may wonder why we use only small brackets in the statement of Lemmas \ref{lem:bmb1} and \ref{lem:bmb2}, as opposed to including the big brackets from Lemma \ref{lem:newlfs}. Here the big brackets do seem natural, but later on when we consider multiple braid moves it seems best to convert to one type of bracket. 
\end{Remark}

\subsection{General bracketing rules}

Theorem \ref{th:brgeneral}, along with \S\ref{ss:brrank2}, gives a realization of $B(\infty)$ by bracketing rules for any simply braided reduced expression  $\ii$. Fix $i \in I$ and let $M_1,\dots,M_k$ be the nontrivial braid moves required to move $\alpha_i$ to the left of the convex order. Let $\Delta_1, \ldots, \Delta_k$ be the corresponding rank two sub-root-systems and, for each $1 \leq \ell \leq k$, let $\cc^\ell$ be the rank $2$ Lusztig datum defined by $c^\ell_{\alpha_i}=0$, and $c^\ell_\beta=c_\beta$ for all other $\beta \in \Delta_\ell$. 
Let $S_i^\ell$ be the string of brackets corresponding to $\cc^\ell$ as in \S\ref{ss:brrank2}.  Let
\begin{equation}
S_i(\cc) = S_i^k \cdots S_i^2 S_i^1  \ )^{c_{\alpha_i}}.
\end{equation}
Then $S_i(\cc)$ can be used to compute $f_i$.

\section{Examples: Calculating crystal operators using bracketing rules}
\label{sec:examples}

We now discuss a specific braidless reduced expression in each of the classical types, and the resulting crystal structures. All of these reduced expressions come from good enumerations, as in  \cite{Lit98}. There are actually many other good enumerations, and hence other braidless reduced expressions. It could be interesting to understand the combinatorics coming from these other enumerations.

In each of the following subsections we fix a particular reduced expression and use it to identify Lusztig data with Kostant partitions.  

\begin{Remark}
Below we often denote roots by stacks of numbers. For instance, 
\[
\stack{2\\3\,4\\2\\1}
\]
corresponds to the root $\alpha_1+2 \alpha_2+\alpha_3+\alpha_3$ in type $D_4$. 
The ordering of these stacks is chosen for the following reason: a root corresponds to a left bracket in $S_i$ exactly if one can place an $i$ at the top of the stack and still have either a root or a sum of roots. It corresponds to a right bracket in $S_i$ exactly if one can remove an $i$ from the top of the stack and still have either a root or a sum of roots.

\end{Remark}

\subsection{Type $A_n$}
The set of positive roots is $\{\alpha_{i,j}: 1 \le i \le j \le n\}$, where $\alpha_{i,j} = \alpha_i + \alpha_{i+1} + \cdots + \alpha_j$.  The word $\ii^A$ corresponding to the reduced expression 
\[
w_0 = (s_1s_2\cdots s_n)(s_1s_2\cdots s_{n-1}) \cdots (s_1s_2)s_1
\] 
is simply braided by Lemma \ref{lemma:reverse_Littelmann}. The corresponding order on positive roots $\alpha_{i,j} \prec \alpha_{i',j'}$ if and only if $i < i'$ or $i = i'$ and $j< j'$.  Then, given $i\in I$ and an $\ii^A$-Lusztig datum $\cc = (c_\beta)_{\beta\in\Phi^+}$, the string of brackets $S_i(\cc)$ is 
\[
\underbrace{)\cdots)}_{c_{\alpha_{1,i}}}\ 
\underbrace{(\cdots(}_{c_{\alpha_{1,i-1}}}\
\underbrace{)\cdots)}_{c_{\alpha_{2,i}}}\ 
\underbrace{(\cdots(}_{c_{\alpha_{2,i-1}}}\ \ \
\cdots\ \ \
\underbrace{)\cdots)}_{c_{\alpha_{i-i,i}}}\ 
\underbrace{(\cdots(}_{c_{\alpha_{i-i,i-1}}}\
\underbrace{)\cdots)}_{c_{\alpha_{i,i}}}.
\]

\begin{Example}\label{ex:pbw_A}
Consider type $A_3$ with $\ii = (1,2,3,1,2,1)$ and $i=2$.  The corresponding order on positive roots is $1 \prec \stack{2\\1} \prec \stack{3\\2\\1} \prec 2 \prec \stack{3\\2} \prec 3$. If $b \in \CB_\ii$ is such that $c^\ii(b) = (2,3,1,3,3,2)$, then the corresponding Kostant partition is 
\[
\arraycolsep=2pt
\begin{array}{cccccccccccccl}
1 & 1 & \stack{2\\1} & \stack{2\\1} & \stack{2\\1} & \stack{3\\2\\1} & 2 & 2 & 2 & \stack{3\\2} & \stack{3\\2} & \stack{3\\2}& 3 & 3.
\end{array}
\]
Placing the parts/roots in the order prescribed by Definition \ref{def:bracket_rule}, we get 

\begin{minipage}{4.72in}
\[
\arraycolsep=2pt
\begin{array}{ccccccccccccccllll}
& \stack{2\\1} & \stack{2\\1} & \stack{2\\1} & 1 & 1 & 2 & 2 & 2 \\[5pt]
S_2: \quad & ) & ) & ) & \tikzmark{leftA1}{{\color{red}(}} & {\color{red}(} & {\color{red})}  & \tikzmark{rightA1}{{\color{red})}} & )  &.
\end{array}
\]
\DrawLine[red, thick, opacity=0.5]{leftA1}{rightA1}
\end{minipage}

\noindent There are no uncanceled left brackets, so applying $f_2$ yields
\[
\arraycolsep=2pt
\begin{array}{ccccccccccccccl}
1 & 1 & \stack{2\\1} & \stack{2\\1} & \stack{2\\1} & \stack{3\\2\\1} & 2 & 2 & 2 &  {\color{blue}2} & \stack{3\\2} & \stack{3\\2} & \stack{3\\2}& 3 & 3.
\end{array}
\]
\end{Example}

\begin{table}[t]
\renewcommand{\arraystretch}{1.2}
\[
\begin{array}{cl}\toprule
\beta_{i,k}= \alpha_i + \cdots + \alpha_{k}, & 1\le i\le k \le n-1 \\
\gamma_{i,k}=\alpha_i + \cdots + \alpha_{n-2}+ \alpha_{n} + \alpha_{n-1}+ \cdots + \alpha_k, & 1\le i < k \le n\\ \midrule
\beta_{i,k}= \epsilon_i-\epsilon_{k+1}, & 1\le i\le k \le n-1 \\
\gamma_{i,k}= \epsilon_i+\epsilon_k, & 1\le i < k \le n\\\bottomrule
\end{array}
\]
\caption{Positive roots of type $D_n$, expressed both as a linear combination of simple roots and in the canonical realization following \cite{bourbaki}.}\label{posroots}
\end{table}

\subsection{Type $D_n$} \label{ss:tdn}
By Lemma \ref{lemma:reverse_Littelmann} the word $\ii^D$ associated to the reduced expression
\[
w_0 = 
(s_1s_2\cdots s_{n-1}s_ns_{n-2}\cdots s_2s_1)
\cdots 
(s_{n-2}s_{n-1}s_ns_{n-2})s_{n-1}s_n
\]  
is simply braided. Using notation from Table \ref{posroots},
the order of the positive roots corresponding to the subword $(i,i+1,\dots,n,n-2,\dots,i)$ of $\ii^D$, for $1\le i \le n-2$, is
\[
\begin{cases}
\beta_{i,i} \prec \beta_{i,i+1} \prec \dots \prec \beta_{i,n-2} \prec \beta_{i,n-1} \prec \gamma_{i,n} \prec \gamma_{i,n-1} \prec \cdots \prec \gamma_{i,i+1} & \text{ if } i \equiv 1 \bmod 2, \\
\beta_{i,i} \prec \beta_{i,i+1} \prec \dots \prec \beta_{i,n-2} \prec \gamma_{i,n} \prec \beta_{i,n-1} \prec \gamma_{i,n-1} \prec \cdots \prec \gamma_{i,i+1} & \text{ if } i\equiv 0 \bmod 2.
\end{cases}
\]
The ordering on the roots corresponding to the suffix $(n-1,n)$ of $\ii^D$ is 
\[
\begin{cases}
\beta_{n-1,n-1} \prec \gamma_{n-1,n} & \text{ if } n \equiv 0 \bmod 2, \\
\gamma_{n-1,n} \prec \beta_{n-1,n-1} & \text{ if } n \equiv 1 \bmod 2.
\end{cases}
\]
It follows that, for a Kostant partition $\cc = (c_\beta)_{\beta\in \Phi^+}$, the string of brackets $S_i^{\ii^D}(\cc)$ is
\[
\begin{cases}
\ \underbrace{)\cdots)}_{c_{\beta_{1,i}}}\ 
\underbrace{(\cdots(}_{c_{\beta_{1,i-1}}}\
\underbrace{)\cdots)}_{c_{\gamma_{1,i}}}\ 
\underbrace{(\cdots(}_{c_{\gamma_{1,i+1}}}\ 
\cdots\ 
\underbrace{)\cdots)}_{c_{\beta_{i-1,i}}}\ 
\underbrace{(\cdots(}_{c_{\beta_{i-1,i-1}}}\
\underbrace{)\cdots)}_{c_{\gamma_{i-1,i}}}\ 
\underbrace{(\cdots(}_{c_{\gamma_{i-1,i+1}}}\
\underbrace{)\cdots)}_{c_{\beta_{i,i}}}, 
& i \neq n, \\
\ \underbrace{)\cdots)}_{c_{\gamma_{1,n}}}\ 
\underbrace{(\cdots(}_{c_{\beta_{1,n-2}}}\
\underbrace{)\cdots)}_{c_{\gamma_{1,n-1}}}\ 
\underbrace{(\cdots(}_{c_{\beta_{1,n-1}}}\ 
\cdots\ 
\underbrace{)\cdots)}_{c_{\gamma_{n-2,n}}}\ 
\underbrace{(\cdots(}_{c_{\beta_{n-2,n-2}}}\
\underbrace{)\cdots)}_{c_{\gamma_{n-2,n-1}}}\ 
\underbrace{(\cdots(}_{c_{\beta_{n-2,n-1}}}\
\underbrace{)\cdots)}_{c_{\gamma_{n-1,n}}},
& i = n.
\end{cases}
\]

\begin{Example}
Consider the setup from Example \ref{ex:pbw_compute}.  The Kostant partition corresponding to that datum is
\[
\arraycolsep=2pt
\begin{array}{cccccccccccccccccccccl}
1 & 1 & \stack{2\\1} & \stack{3\\2\\1} & \stack{3\\2\\1} & \stack{3\\2\\1} & \stack{3\\2\\1} & \stack{4\\2\\1} & \stack{4\\2\\1} & \stack{3\,4\\2\\1} & \stack{2\\3\,4\\2\\1} & \stack{2\\3\,4\\2\\1} & \stack{2\\3\,4\\2\\1} & 2 & 2 & 2 & \stack{4\\2} & \stack{3\\2} & \stack{3\\2} & \stack{3\,4\\2} & 3 & 3.
\end{array}
\]
Arranging the parts/roots in the order prescribed by Definition \ref{def:bracket_rule}, we get
\[
\arraycolsep=3pt
\begin{array}{ccccccccccccccccccccclll}
&\stack{4\\2\\1} & \stack{4\\2\\1} & \stack{2\\1} & \stack{3\,4\\2\\1} & \stack{3\\2\\1} & \stack{3\\2\\1} & \stack{3\\2\\1} & \stack{3\\2\\1} & \stack{4\\2} & 2 & 2 & 2 & \stack{3\,4\\2} & \stack{3\\2} & \stack{3\\2} \\[5pt]
S_4:\quad & ) & ) & \tikzmark{leftD1}{\color{red}(} & \tikzmark{rightD1}{\color{red})} & {\color{blue}\boldsymbol{(}} & ( & ( & \tikzmark{leftD2}{\color{red}(} & \tikzmark{rightD2}{\color{red})} & ( & ( & \tikzmark{leftD3}{\color{red}(} & \tikzmark{rightD3}{\color{red})} & ( & ( & .
\end{array}
\]
\DrawLine[red, thick, opacity=0.5]{leftD1}{rightD1}
\DrawLine[red, thick, opacity=0.5]{leftD2}{rightD2}
\DrawLine[red, thick, opacity=0.5]{leftD3}{rightD3}

\noindent Hence applying $f_4$ gives
\[
\arraycolsep=2pt
\begin{array}{cccccccccccccccccccccl}
1 & 1 & \stack{2\\1} & \stack{3\\2\\1} & \stack{3\\2\\1} & \stack{3\\2\\1} & \stack{4\\2\\1} & \stack{4\\2\\1} & \stack{3\,4\\2\\1} & {\color{blue}\stack{3\,4\\2\\1}} & \stack{2\\3\,4\\2\\1} & \stack{2\\3\,4\\2\\1} & \stack{2\\3\,4\\2\\1} & 2 & 2 & 2 & \stack{4\\2} & \stack{3\\2} & \stack{3\\2} & \stack{3\,4\\2} & 3 & 3.
\end{array}
\]
\end{Example}

\subsection{Types $B_n$ and $C_n$}
These have the same Weyl group. By Lemma \ref{lemma:reverse_Littelmann}, the word
$\ii^{BC}$ associated to the reduced expression
\[
w_0 = 
(s_1s_2\dots s_{n-1}s_ns_{n-1} \cdots s_2s_1)
(s_2\cdots s_{n-1}s_ns_{n-1}\cdots s_2) 
\cdots 
(s_{n-1}s_ns_{n-1})
s_n
\] 
is simply braided.

\begin{table}[t]
\renewcommand{\arraystretch}{1.3}
\[
\begin{array}{cl}\toprule
\beta_{i,k}= \alpha_i + \cdots + \alpha_{k}, & 1\le i\le k \le n \\
\gamma_{i,k} =\alpha_i + \cdots + \alpha_{k-1}+2 \alpha_{k}+ \cdots + 2 \alpha_n , & 1\le i < k \le n \\\midrule
\beta_{i,k}= \epsilon_i-\epsilon_{k+1}, & 1\le i\le k \le n-1 \\
\beta_{i,n} = \epsilon_i, & 1\le i \le n\\
\gamma_{i,k} = \epsilon_i+\epsilon_k , & 1\le i < k \le n \\\bottomrule
\end{array}
\]
\caption{Positive roots of type $B_n$, expressed both as a linear combination of simple roots and in the canonical realization following \cite{bourbaki}.}\label{tab:Broots}
\end{table}

The positive roots for type $B_n$ are listed in Table \ref{tab:Broots}, and the convex order on $\Phi^+$ in type $B_n$ corresponding to the subword $(i,i+1,\dots,n-1,n,n-1,\dots,i+1,i)$ of $\ii^{BC}$ is 
\[
\beta_{i,i} \prec \beta_{i,i+1} \prec \cdots \prec \beta_{i,n} \prec \gamma_{i,n-1} \prec \gamma_{i,n-2} \prec \cdots \prec \gamma_{i,i+1}.
\]
Given a Kostant partition $\cc = (c_\beta)_{\beta\in\Phi^+}$, the string of brackets $S_i^{\ii^{BC}}(\cc)$ is 
\[
\begin{cases}
\ \underbrace{)\cdots)}_{c_{\beta_{1,i}}}\ 
\underbrace{(\cdots(}_{c_{\beta_{1,i-1}}}\
\underbrace{)\cdots)}_{c_{\gamma_{1,i}}}\ 
\underbrace{(\cdots(}_{c_{\gamma_{1,i+1}}}\ \ \
\cdots\ \ \
\underbrace{)\cdots)}_{c_{\beta_{i-1,i}}}\ 
\underbrace{(\cdots(}_{c_{\beta_{i-1,i-1}}}\
\underbrace{)\cdots)}_{c_{\gamma_{i-1,i}}}\ 
\underbrace{(\cdots(}_{c_{\gamma_{i-1,i+1}}}\
\underbrace{)\cdots)}_{c_{\beta_{i,i}}}, 
& i \neq n, \\
\ \underbrace{)\cdots)}_{c_{\beta_{1,n}}}\ 
\underbrace{(\cdots(}_{2c_{\beta_{1,n-1}}}\
\underbrace{)\cdots)}_{2c_{\gamma_{1,n}}}\ 
\underbrace{(\cdots(}_{c_{\beta_{1,n}}}\ \ \
\cdots\ \ \
\underbrace{)\cdots)}_{c_{\beta_{n-1,n}}}\ 
\underbrace{(\cdots(}_{2c_{\beta_{n-1,n-1}}}\
\underbrace{)\cdots)}_{2c_{\gamma_{n-1,n}}}\ 
\underbrace{(\cdots(}_{c_{\beta_{n-1,n}}}\
\underbrace{)\cdots)}_{c_{\beta_{n,n}}},
& i = n.
\end{cases}
\]

\begin{Example}
Consider the Kostant partition of type $B_3$ given by
\[
\arraycolsep=2pt
\begin{array}{cccccccccccccccccccccccl}
1 & 1 & 1 & 1 & \stack{2\\1} & \stack{3\\2\\1} & \stack{3\\2\\1} & \stack{3\\2\\1} & \stack{3\,3\\2\\1} & \stack{3\,3\\2\\1} & \stack{2\\3\,3\\2\\1} & \stack{3\\2} & \stack{3\\2} & \stack{3\\2} & \stack{3\\2} & \stack{3\\2} & \stack{3\,3\\2} & 3 & 3.
\end{array}
\]
We have 
\[
\arraycolsep=3pt
\begin{array}{ccccccccccl}
& \stack{2\\1} & 1 & 1 & 1 & 1 & \stack{2\\3\,3\\2\\1} & \stack{3\,3\\2\\1} & \stack{3\,3\\2\\1} \\[8pt]
S_2: &) & {\color{blue}\boldsymbol{(}} & ( & ( & \tikzmark{leftB11}{\color{red}(} & \tikzmark{rightB11}{\color{red})} & ( & \tikzmark{leftB1}{\color{red}(} & .
\end{array}
\]
\DrawLine[red, thick, opacity=0.5]{leftB11}{rightB11}

\noindent Hence, applying $f_2$ gives
\[
\arraycolsep=2pt
\begin{array}{cccccccccccccccccccccccl}
1 & 1 & 1 & {\color{blue}\stack{2\\1}} & \stack{2\\1} & \stack{3\\2\\1} & \stack{3\\2\\1} & \stack{3\\2\\1} & \stack{3\,3\\2\\1} & \stack{3\, 3\\2\\1} & \stack{2\\3\,3\\2\\1} & \stack{3\\2} & \stack{3\\2} & \stack{3\\2} & \stack{3\\2} & \stack{3\\2} & \stack{3\,3\\2} & 3 & 3.
\end{array}
\]
%\begin{minipage}{4.72in}
Similarly, 
\[
\arraycolsep=2pt
\begin{array}{cccccccccccccccccccccccl}
&\stack{3\\2\\1} & \stack{3\\2\\1} & \stack{3\\2\\1} & \stack{2\\1}  & \stack{3\,3\\2\\1} & \stack{3\,3\\2\\1}  & \stack{3\\2\\1} & \stack{3\\2\\1} & \stack{3\\2\\1} & \stack{3\\2} & \stack{3\\2} & \stack{3\\2} & \stack{3\\2} & \stack{3\\2} & \stack{3\,3\\2} & \stack{3\\2} & \stack{3\\2} & \stack{3\\2} & \stack{3\\2} & \stack{3\\2} & 3 & 3 \\[8pt]
S_3:&) & ) & ) & \tikzmark{leftB2}{\color{red}(}  {\color{red}(} & {\color{red})}  \tikzmark{rightB2}{\color{red})} & )  ) & \tikzmark{leftB3}{\color{red}(} & {\color{red}(} & {\color{red}(} & {\color{red})} & {\color{red})} & \tikzmark{rightB3}{\color{red})} & ) & ) & ) ) & {\color{blue}\boldsymbol{(}} & ( & ( & \tikzmark{leftB4}{\color{red}(} & {\color{red}(} & {\color{red})} & \tikzmark{rightB4}{\color{red})} & .\\[8pt]
\end{array}
\]
\DrawLine[red, thick, opacity=0.5]{leftB2}{rightB2}
\DrawLine[red, thick, opacity=0.5]{leftB3}{rightB3}
\DrawLine[red, thick, opacity=0.5]{leftB4}{rightB4}
%\end{minipage}

\noindent Hence applying $f_3$ gives
\[
\arraycolsep=2pt
\begin{array}{cccccccccccccccccccccccl}
1 & 1 & 1 & 1 & \stack{2\\1} & \stack{3\\2\\1} & \stack{3\\2\\1} & \stack{3\\2\\1} & \stack{3\,3\\2\\1} & \stack{3\,3\\2\\1} & \stack{2\\3\,3\\2\\1} & \stack{3\\2} & \stack{3\\2} & \stack{3\\2} & \stack{3\\2} & {\color{blue}\stack{3\,3\\2}} & \stack{3\,3\\2} & 3 & 3.
\end{array}
\]
\end{Example}

\begin{table}[t]
\renewcommand{\arraystretch}{1.3}
\[
\begin{array}{cl}\toprule
\beta_{i,k}= \alpha_i + \cdots + \alpha_{k}, & 1\le i\le k \le n-1 \\
\gamma_{i,k}= \alpha_i + \cdots + \alpha_{n-1}+\alpha_n + \alpha_{n-1} + \cdots + \alpha_k, & 1\le i \le k \le n \\\midrule
\beta_{i,k}= \epsilon_i-\epsilon_{k+1}, & 1\le i\le k \le n-1 \\
\gamma_{i,k}= \epsilon_i+\epsilon_k, & 1\le i \le  k \le n  \\\bottomrule
\end{array}
\]
\caption{Positive roots of type $C_n$, expressed both as a linear combination of simple roots and in the canonical realization following \cite{bourbaki}.}\label{tab:Croots}
\end{table}

The positive roots for type $C_n$ are listed in Table \ref{tab:Croots}. For $i=1,\dots,n-1$ the convex order corresponding to the subword $(i,i+1,\dots,n-1,n,n-1,\dots,i+1,i)$ of $\ii^{BC}$ is 
\[
\beta_{i,i} \prec
\beta_{i,i+1} \prec
\cdots
\beta_{i,n-1} \prec
\gamma_{i,i} \prec
\gamma_{i,n} \prec
\gamma_{i,n-1} \prec
\cdots
\prec
\gamma_{i,i+1}.
\]
Given a Kostant partition $\cc = (c_\beta)_{\beta\in\Phi^+}$ and $i \in I$, the string of brackets $S_i^{\ii^{BC}}(\cc)$ is 
\[
\begin{cases}\
\underbrace{)\cdots)}_{c_{\beta_{1,i}}}\ 
\underbrace{(\cdots(}_{c_{\beta_{1,i-1}}}\
\underbrace{)\cdots)}_{c_{\gamma_{1,i}}}\ 
\underbrace{(\cdots(}_{c_{\gamma_{1,i+1}}}\ \ \
\cdots\ \ \
\underbrace{)\cdots)}_{c_{\beta_{i-1,i}}}\ 
\underbrace{(\cdots(}_{c_{\beta_{i-1,i-1}}}\
\underbrace{)\cdots)}_{c_{\gamma_{i-1,i}}}\ 
\underbrace{(\cdots(}_{c_{\gamma_{i-1,i+1}}}\
\underbrace{)\cdots)}_{c_{\beta_{i,i}}}, 
& \text{ if } i\neq n, \\
\underbrace{)\cdots)}_{c_{\gamma_{1,1}}}\ 
\underbrace{(\cdots(}_{c_{\beta_{1,n-1}}}\
\underbrace{)\cdots)}_{c_{\gamma_{1,n}}}\ 
\underbrace{(\cdots(}_{c_{\gamma_{1,1}}}\ \ \ 
\cdots \ \ \ 
\underbrace{)\cdots)}_{c_{\gamma_{n-1,n-1}}}\ 
\underbrace{(\cdots(}_{c_{\beta_{n-1,n-1}}}\
\underbrace{)\cdots)}_{c_{\gamma_{n-1,n}}}\ 
\underbrace{(\cdots(}_{c_{\gamma_{n-1,n-1}}}\
\underbrace{)\cdots)}_{c_{\gamma_{n,n}}},
& \text{ if } i = n.
\end{cases}
\]

\begin{Example}
Let $\cc = (c_\beta)$ be the Kostant partition determined by the $\ii$-Lusztig datum from Example \ref{ex:C_PBW}; that is, the Kostant partition of type $C_3$ given by
\[
\arraycolsep=2pt
\begin{array}{cccccccccccccccccl}
1 & 1 & 1 & 1 & \stack{2\\1} & \stack{3\\2\,2\\1\,1} & \stack{3\\2\,2\\1\,1} & \stack{3\\2\,2\\1\,1} & \stack{3\\2\\1} & \stack{3\\2\\1} & \stack{3\\2\,2} & \stack{3\\2\,2} & \stack{3\\2\,2} & \stack{3\\2\,2} & \stack{3\\2\,2} & \stack{3\\2} & 3 & 3.
\end{array}
\]
To compute $f_3$, we have
\[
\arraycolsep=1pt
\begin{array}{cccccccccccccccccccccccl}
&\stack{3\\2\,2\\1\,1} & \stack{3\\2\,2\\1\,1} & \stack{3\\2\,2\\1\,1} & \stack{2\\1} & \stack{3\\2\\1} & \stack{3\\2\\1} & \stack{3\\2\,2\\1\,1} & \stack{3\\2\,2\\1\,1} & \stack{3\\2\,2\\1\,1} & \stack{3\\2\,2} & \stack{3\\2\,2} & \stack{3\\2\,2} & \stack{3\\2\,2} & \stack{3\\2\,2} & \stack{3\\2} & \stack{3\\2\,2} & \stack{3\\2\,2} & \stack{3\\2\,2} & \stack{3\\2\,2} & \stack{3\\2\,2} & 3 & 3 \\[8pt]
S_3:&) & ) & ) & \tikzmark{leftC0}{\color{red}(} & \tikzmark{rightC0}{\color{red})} & ) & \tikzmark{leftC1}{\color{red}(} & {\color{red}(} & {\color{red}(} & {\color{red})} & {\color{red})} & \tikzmark{rightC1}{\color{red})} & ) & ) & ) & {\color{blue}\boldsymbol{(}} & ( & ( & \tikzmark{leftC2}{\color{red}(} & {\color{red}(} & {\color{red})} & \tikzmark{rightC2}{\color{red})} & .
\end{array}
\]
\DrawLine[red, thick, opacity=0.5]{leftC0}{rightC0}
\DrawLine[red, thick, opacity=0.5]{leftC1}{rightC1}
\DrawLine[red, thick, opacity=0.5]{leftC2}{rightC2}

\noindent Hence the Kostant partitions corresponding to $f_3(\bf c)$ is 
\[
\arraycolsep=2pt
\begin{array}{ccccccccccccccccccl}
1 & 1 & 1 & 1 & \stack{2\\1} & \stack{3\\2\,2\\1\,1} & \stack{3\\2\,2\\1\,1} & \stack{3\\2\,2\\1\,1} & \stack{3\\2\\1} & \stack{3\\2\\1} & \stack{3\\2\,2} & \stack{3\\2\,2} & \stack{3\\2\,2} & \stack{3\\2\,2} & \stack{3\\2} & {\color{blue}\stack{3\\2}} & {\color{blue}\stack{3\\2}} & 3 & 3.
\end{array}
\]
Notice that, in accordance with Lemma \ref{lem:bmb2}, $c_{\alpha_2+\alpha_3}$ has increased by 2. 
\end{Example}

\begin{acknowledgements}
We thank the developers of Sage \cite{sage,combinat}, where many of the motivating calculations for this work were executed. We also thank Dinakar Muthiah for useful discussions. B.S.\ thanks Loyola University of Chicago for its hospitality during a visit in which this manuscript began.
\end{acknowledgements}

\bibliography{KP-crystal}{}
\bibliographystyle{amsplain}

\end{document}